\newtheorem*{thm*}{Theorem}
 \newtheorem*{prop*}{Proposition}
\newtheorem*{rmq}{\textit{Remark}}
\newtheorem{theorem}{Theorem}[subsection]
 \newtheorem{lemma}[theorem]{Lemma}
  \newtheorem*{lemma*}{Lemma}
\newtheorem{proposition}[theorem]{Proposition}
\newtheorem{corollary}[theorem]{Corollary}
\newtheorem{definition}[theorem]{Definition}
 \newtheorem{example}[theorem]{Example}
 \newtheorem{remark}[theorem]{\textit{Remark}}
\numberwithin{table}{subsection}
\def\lset{\{}  
	\def\rset{ \}}  
	\def\set#1{\lset#1\rset} 
	\def\sett#1#2{\lset #1 \mid  #2 \rset}  
 \def\half{\frac 12}
 \renewcommand\hom{\mathop{\rm Hom}\nolimits}
 \def\gr{\text{\rm Gr}}
\def\la{\langle}
\def\ra{\rangle}
\def\espinnm{\operatorname{{\mathsf  Nm}_{\rm spin}^\varepsilon}}
\DeclareMathOperator{\Spec}{Spec}
\DeclareMathOperator{\Proj}{Proj}
\DeclareMathOperator{\Pic}{Pic}
\DeclareMathOperator{\NS}{NS}
\DeclareMathOperator{\Gal}{Gal}
\DeclareMathOperator{\Br}{Br}
\DeclareMathOperator{\ch}{char}
\DeclareMathOperator{\disc}{disc}
\DeclareMathOperator{\Frob}{Frob}
\DeclareMathOperator{\Tr}{Tr}
\DeclareMathOperator{\tH}{H}
\newcommand{\bC}{{\mathbb{C}}}
\newcommand{\bbF}{{\mathbb{F}}}
\newcommand{\bP}{{\mathbb{P}}}
\newcommand{\bQ}{{\mathbb{Q}}}
\newcommand{\bR}{{\mathbb{R}}}
\newcommand{\bZ}{{\mathbb{Z}}}
	\newcommand\cF{{\mathcal F}}
	\newcommand\cO{{\mathcal O}}
	\newcommand\cM{{\mathcal M}}
	\newcommand\cS{{\mathcal S}}
              \DeclareSymbolFont{symbols2}{LS1}{stixfrak} {m} {n}
		\DeclareMathSymbol{\operp}{\mathbin}{symbols2}{"A8}
\def\mapright#1{\mathop{\vbox{\ialign{
                                ##\crcr
    ${\scriptstyle\hfil\;\;#1\;\;\hfil}$\crcr
 \noalign{\kern2pt\nointerlineskip}
    \rightarrowfill\crcr}}\;}}
\def\into{\hookrightarrow}
\def\del#1#2{\partial #1/\partial #2}
\def\comp{\raise1pt\hbox{{$\scriptscriptstyle\circ$}}}
\def\id{\text{\rm id}} 
\begin{document}

\title[A Class of Elliptic Surfaces  in Weighted Projective Space]
{A Remarkable Class of Elliptic Surfaces of Amplitude $1$  in Weighted Projective Space 
\\  
} 
\author{Gregory Pearlstein \& Chris Peters\\
With an appendix by Wim Nijgh}
\date{\today}
\begin{abstract}
Surfaces of amplitude 1 in ordinary projective space are of general type, but this need not be the case in weighted projective spaces. 
Indeed, there are 4 classes of quasi-smooth weighted hypersurfaces in $\bP(1,2,a,b)$ of amplitude 1 with an elliptic pencil cut out by hyperplanes. 
Their moduli spaces are constructed, the monodromy of their universal families is determined as well as their period maps which turn out
to be generally immersive. For those that are not, a mixed Torelli theorem holds. 
 We added an application  to 
 certain compactifications of moduli  spaces of surfaces of general type with $K^2=1$, $p_g=2$ and $q=0$
 as a follow up of \cite{GPSZ},   as well as detailed  \textsc{SageMath}-calculations. The appendix    written by Wim Nijgh shows that the general 
 member of the type (a) and type  (b) elliptic family has "trivial" Picard lattice, i.e. is spanned by fiber components and a multisection. 
\par

\noindent MSC Class: 14Jxx; 32G20

\end{abstract}

\keywords{elliptic surface, period maps, (mixed) Torelli theorems, Picard lattices, monodromy of universal families, Tate and Mumford--Tate conjectures, KSBA compactifications}

\maketitle

\section*{Introduction}

Hypersurfaces and, more generally, complete intersections in weighted
projective spaces are basic entries in the geography of algebraic varieties. 
In particular,  M.~Reid \cite{ReidCanSIng} gave a list of $95$ families of
weighted projective K3 hypersurfaces with Gorenstein singularities.

There are several instances where a moduli space  of  a class of surfaces can
be described in terms of weighted complete intersections. We mention the
Kunev surfaces \cite{Kynev,Todorov2} which are  bidegree $(6,6)$ complete
intersections  in $\bP(1,2,2,3,3)$, and certain Horikawa surfaces studied in
\cite{PearlZhang} which are hypersurfaces of degree $10$ in $\bP(1,1,2,5)$.

\subsection*{1} We recall some properties of hypersurfaces in weighted
projective spaces and refer to \cite{DolWPV,IFWings} for details.
If $X$ is a degree $d$ hypersurface in weighted projective space
$\bP(a_0,\dots,a_n) $ its \textbf{\emph{type}} is the symbol
$(d,[a_0,\dots,a_n])$.
  Following \cite{IFWings}, the integer $\alpha(X)=d -(a_0+\dots+a_n)$ is called the
\textbf{\emph{amplitude}} of $X$. If $n=3$ and $X$ would be smooth,
$\alpha(X)<0$,  $\alpha(X)=0$, respectively $\alpha(X)>0$ corresponds to $X$
being a rational or ruled surface, a K3-surface, or a surface of general type
respectively.  Since weighted projective spaces and their hypersurfaces
therein in general are singular, the amplitude no longer measures their place
in the classification. B.~Hunt and R. Schimmrigk~\cite{hunt} found a striking
example of this phenomenon: the degree $66$ 
Fermat-type surface $x_0^{66}+ x_1^{11}+ x_2^{3 }+x_3^{2}=0$ in $\bP(1,6,22,33)$
of amplitude $66-(1+6+22+33)=4$ turns out to be an elliptic K3-surface. In
fact it is isomorphic to the unique K3 surface with  the cyclic group of order
$66$ as its automorphism group described by H. Inose~\cite{inose}. 
J. Koll\'ar~\cite[Sect. 5]{k-bmy} found  several families of hypersurfaces in
weighted projective space with positive amplitude which have the same rational
cohomology as projective space.  In the surface case one then obtains rational
surfaces with positive amplitude, for example the surface in
$\bP(a_0,a_1,a_2,a_3)$ given by
 $x_0^{d_0}x_1+ x_1^{d_1}x_2+x_2^{d_2}x_3+x_3^{d_3}x_0$  where
$(d_0,d_1,d_2,d_3)= (4, 5, 6, 7)$ and $(a_0, a_1, a_2,a_3)=(174,143,124,95)$
which has degree $839$,  amplitude  $303$  and Hodge numbers
$p_g=0,h^{1,1}=2$.

In this note we  restrict our discussion to families of surfaces in
weighted projective $3$-space whose amplitude is $1$ and which are not of
general type. The classification of surfaces suggests looking  for conditions
that give K3 surfaces or elliptic surfaces. 

Except in Section~\ref{sec:KSBA} where we apply the results of previous sections, 
we  only consider quasi-smooth surfaces, i.e., surfaces whose
only singularities occur where the weighted projective space has singularities.
This  ensures that a  surface of amplitude $\alpha$ has canonical sheaf
$\cO(\alpha)$  (see Remark~\ref{rmk:OnAmpl}) which simplifies many calculations.
Assuming that $\alpha=1$ and that $p_g=1$ then leads to degree $d=a+b+4$
surfaces in $\bP(1,2,a,b)$.\footnote{Note that a quasi-smooth surface in
 $\bP(1,1,a,b)$  with amplitude $1$ has $p_g\ge 2$ since $H^0(\cO(1))$
corresponds to the polynomials of degree $1$.} This  restricts the
possibilities to just four cases. Two give properly elliptic surfaces and two
give K3 surfaces. 

\begin{prop*}[=Proposition~\ref{prop:OurExamples} and
Proposition~\ref{prop:Main1}] The only quasi-smooth hypersurfaces $X$ of type
$(d,[1,2,a,b])$ with $a,b$ co-prime odd integers and such that $d= a+b+4$ are: 
 \begin{enumerate}
\item  $(14,[1,2,3,7])$, for  example
  $ x_0^{14}+ x_1^7 + x_2^4x_1+   x_3^2$;
\item  $(12,[1,2,3,5])$, for example $x_0^{12}+x_1^6+x_2^4+x_1x_3^2$;
\item $(16,[1,2,5,7])$, for example $x_0^{16}+x_1^8+x_0x_2^3+x_1x_3^2$;
\item $(22,[1,2,7,11])$, for example $x_0^{22}+x_1^{11} + x_0 x_2^3+x_3^2$.
 \end{enumerate}
The   types mentioned in 1 and 2 give  properly elliptic surfaces and  those of type   3  and 4 give
K3 surfaces.  
\end{prop*}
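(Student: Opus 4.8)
The plan has two parts: first, showing the list of four types is exhaustive; second, computing the Kodaira dimension of each surface. For the classification I would run the data through Iano--Fletcher's combinatorial criterion for quasi-smoothness of a weighted hypersurface (\cite{IFWings}). One may assume $3\le a<b$ (interchanging $x_2,x_3$ swaps $a,b$, and $a=1$ puts one in the genuinely different family $\bP(1,1,2,b)$). The part of the criterion attached to $x_3$ of weight $b$ forces $f$ to contain $x_3^m$ with $mb=d$, or $x_3^m x_j$ with $mb+a_j=d$ for some $j\ne3$; combining this with $d=a+b+4$ and the parity and size of $a,b$ leaves only $b=a+4$ (so $x_3^2\in f$) or $b=a+2$ (so $x_1x_3^2\in f$). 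The criterion applied to $x_2$ then forces $a$ to divide one of $b+2,b+3,b+4$, i.e. $a$ divides a small integer; with $a$ odd $\ge3$, $\gcd(a,b)=1$ and $b\in\{a+2,a+4\}$, this pins $(a,b)$ to $(3,7),(3,5),(5,7),(7,11)$. It then remains to verify, by the Jacobian criterion on the affine cone, that each of the four displayed polynomials satisfies \emph{all} of Iano--Fletcher's conditions; this is a finite check.

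For the second part, fix one of the four $X$. By Remark~\ref{rmk:OnAmpl}, $\omega_X=\cO_X(1)$; the cone sequence $0\to\cO_{\bP}(-d)\to\cO_{\bP}\to\cO_X\to0$ gives $q(X)=h^1(\cO_X)=0$, while $p_g(X)=h^0(\cO_X(1))=1$ since $x_0$ is the only monomial of degree $1$, so $\chi(\cO_X)=2$. Being quasi-smooth, $X$ has only cyclic quotient singularities, lying over the isolated singular points of $\bP(1,2,a,b)$; I would locate them by evaluating $f$ at the three coordinate points and, at each, eliminating in the corresponding chart the variable that occurs linearly in $f$, which presents $X$ locally as $\bC^2/\mu_r$ and identifies the type: a single $\tfrac13(1,1)$ point for (a), a single $\tfrac15(1,3)$ for (b), a $\tfrac15(1,1)$ together with a $\tfrac17(1,5)$ for (c), and a single $\tfrac17(1,2)$ for (d). These are rational singularities, so the minimal resolution $\sigma\colon\widetilde X\to X$ keeps $\chi(\cO_{\widetilde X})=2$, $p_g=1$, $q=0$, and writing $K_{\widetilde X}=\sigma^*\cO_X(1)+\sum a_iE_i$ with the discrepancies $a_i\in(-1,0)$ supplied by the Hirzebruch--Jung strings one computes
\[
K_{\widetilde X}^2=\frac{d}{a_0a_1a_2a_3}+\sum_{P\in\mathrm{Sing}\,X}\ \sum_i a_i^{(P)}\bigl(b_i^{(P)}-2\bigr),
\]
which comes out $0$ for (a),(b) and $-2$, $-1$ for (c),(d).

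Now one reads off $\kappa$ via the Enriques--Kodaira classification. Since $\cO_X(1)$ is ample, $K_{\widetilde X}\cdot\sigma^*\cO_X(1)=\cO_X(1)^2>0$, so $K_{\widetilde X}$ is not numerically trivial; in cases (a),(b), where additionally $K_{\widetilde X}^2=0$ and $\widetilde X$ is its own minimal model — which I would establish by showing $K_{\widetilde X}$ is nef, equivalently that no rational curve on $X$ lifts to a $(-1)$-curve — the classification forces $\kappa(\widetilde X)=1$, a properly elliptic surface. In cases (c),(d), $K_{\overline X}^2=K_{\widetilde X}^2+\#(\text{blow-downs to the minimal model }\overline X)$, and once one checks that $\widetilde X$ dominates $\overline X$ by at most $2$ (resp.\ $1$) blow-ups, $K_{\overline X}^2\le0$, so $\widetilde X$ is not of general type; combined with $p_g(\overline X)=1$, $q(\overline X)=0$ this leaves only $\kappa=0$ with $\overline X$ a K3 surface, or $\kappa=1$, and these are told apart by the second plurigenus $P_2(\widetilde X)$, which a short computation shows equals $1$ — hence $\overline X$ is K3.

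A cleaner way to organise this last step, and the one I would actually use, is to compute the whole pluricanonical series $\sum_n P_n(\widetilde X)t^n$ directly from the weighted hypersurface data together with the basket of quotient singularities: the order of its pole at $t=1$ is $\kappa(\widetilde X)+1$, so this simultaneously settles all four cases. In parallel, the projection $[\,x_0^2:x_1\,]$ (or an analogous pair of equal-weight monomials) presents each $X$ as a pencil of genus-one curves — the generic member is a double cover of a weighted $\bP^1$ branched at four points — i.e. an elliptic fibration over $\bP^1$ once one removes the fixed base curve (present only in case (c)) and resolves the base point, which is exactly the quotient singularity found above; Kodaira's canonical bundle formula applied to a relatively minimal model of this fibration then recovers the canonical class and exhibits the elliptic structure promised by the title. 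I expect essentially all of the work to lie in the singularity bookkeeping — the Hirzebruch--Jung resolutions and their discrepancies, and the induced change of the pluricanonical rings — and, for (a),(b), in establishing that $\widetilde X$ is already minimal.
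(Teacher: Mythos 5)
Your first paragraph (the classification) is sound and is essentially the paper's own argument seen from a better angle: the paper's Lemma~\ref{lem:Lat} is exactly the singleton part of Iano--Fletcher's criterion, and where the paper runs a case analysis on which weights divide $d$ and solves the resulting Diophantine equations, you apply the criterion first at $\mathsf P_3$ (forcing $b\in\{a+2,a+4\}$ via parity) and then at $\mathsf P_2$ (forcing $a\mid b+2,\,b+3$ or $b+4$), which pins down the four pairs more transparently. I checked the arithmetic; it is correct, and the implicit restriction $a,b\ge 3$ matches what the paper also assumes tacitly.

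The second half is where there are genuine gaps. Your numerical data ($\chi=2$, the singularity types, $K^2_{\widetilde X}=0,0,-2,-1$) are all correct, but the two steps that actually decide the Kodaira dimension are only asserted. (i) In cases (a), (b) everything hinges on $\widetilde X$ being minimal, and your proposed route -- ``show $K_{\widetilde X}$ is nef by checking no $(-1)$-curves'' -- is not carried out and is harder than necessary; the efficient argument, which you have all the ingredients for but do not assemble, is to use the genus-one pencil $[x_0^2:x_1]$ (which you only invoke ``in parallel'') to get $\kappa\le 1$, so that the minimal model has $K^2=0$; since $K^2_{\widetilde X}=0$ already, $\widetilde X$ is minimal, and then $K_{\widetilde X}\cdot\sigma^*\cO_X(1)>0$ rules out $\kappa=0$. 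Even this requires proving that the generic member of the pencil really has genus one (the paper does this via the adjunction/amplitude computation on the fiber curve in $\bP(1,a,b)$ and the behaviour at the quotient singularity), which you assert but do not verify. (ii) In cases (c), (d) your argument rests either on ``$\widetilde X$ dominates $\overline X$ by at most $2$ (resp.\ $1$) blow-ups'', which is exactly what needs proof (a priori nothing bounds the number of $(-1)$-curves without the explicit geometry the paper supplies), or on $P_2(\widetilde X)=1$, which is asserted, not computed. Note the trap here: $h^0(X,\cO_X(2))=2$ in all four cases, so $P_2(\widetilde X)$ is \emph{not} read off from the weighted coordinate ring; it is decided precisely by the discrepancy/vanishing conditions along the exceptional curves (or by Reid's orbifold plurigenus formula), i.e.\ by the bookkeeping you defer. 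You would also need to record why $P_2=1$ excludes $\kappa=1$ (a minimal properly elliptic surface over $\bP^1$ with $\chi=2$ has $\deg D=0$ and at least one multiple fiber, hence $P_2\ge 2$ by the canonical bundle formula) -- true, but missing. By contrast, the paper avoids all of this by exhibiting the fibration explicitly: the double fiber gives $P_2=2$ and $\kappa=1$ in (a), (b) via Proposition~\ref{thm:CanBunForm}, and explicit birational models (Reid's types $(9,[1,1,3,4])$, $(12,[1,1,4,6])$, and Weierstra\ss{} forms) identify (c), (d) as K3. Your numerical strategy can be made to work, but as written the decisive computations are the ones left undone.
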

 The  examples figuring  in the above proposition are preferred members of the given type which  will be referred to
   as the \textbf{\emph{basic examples}}.  The monomials present for each of the basic examples are required for quasi-smoothness.
  The minimal surfaces they define  can be shown to be Delsarte surfaces  in Shioda's terminology. See \cite[\S~13.2.1]{mwlatts} for details.

\begin{rmq}   
We shall show (see \S\ref{ssec:Invs}) that
all  type 3 surfaces are birational to surfaces of type $(9,  [1,1,3,4])$ and
all type 4 surfaces are  birational to  surfaces of type $(12,[1,1,4,6])$.
 The first is number  $8$ in  M. Reid's list of $95$ families, and the second
is number $14$. 
\end{rmq}
 
\subsection*{2}  In the weighted case  the group of projective automorphisms is
in general not reductive which causes problems when we want to construct moduli
spaces of weighted hypersurfaces. In our situation we circumvent this problem
by giving  certain normal forms which give projectively isomorphic surfaces if
and only if they are in the orbit of some fixed  algebraic torus of projective
transformations. In each of the four cases this gives a quasi-projective
moduli space of the expected dimension. See \S~\ref{ssec:NormForms}.
 
\begin{rmq} A general approach to the construction of geometric quotients under
non-reductive group actions has been proposed  in 
\cite{GITuni1,GITuni2,GITuni3}. Based on this, D. Bunnett showed~\cite{bunnett}
that certain classes of weighted hypersurfaces admit GIT-moduli spaces. In his
work, it is crucial that the weights divide {the} degree (in order to have
Cartier divisors instead  of $\bQ$-divisors for the linearization). Another
crucial assumption concerns the unipotent radical of the group of projective
automorphisms of the weighted projective space. Neither one of these holds
for our examples.
\end{rmq}

The collection of   degree $d=a+b+4$ weighted 
hypersurfaces in $\bP(1,2,a,b)$ in a natural way forms an ordinary projective
space $\bP^N$ by considering the $N+1$ coefficients in front of all possible
monomials. The quasi-smooth  hypersurfaces  belong to  a Zariski-open subset
$U_{1,2,a,b}$ of this projective space. The tautological family $\cF_{a,b}$ of
degree $d$ quasi-smooth hypersurfaces over $U_{1,2,a,b}$ is called the 
corresponding \textbf{\emph{universal family}}.   Using degenerations having
an isolated exceptional unimodal Arnol'd-type singularity we show that the
global monodromy group of the universal families in each case 1--4   is
as big  as possible:

\begin{prop*}[=Proposition~\ref{prop:GlobMon}]  Let $L $ be the middle
cohomology group of the minimal resolution of singularities of a quasi-smooth
member of $\cF_{a,b}$,  let $S \subset L$ be the Picard lattice of a general
member and $T=S^\perp$ the transcendental lattice. Then the monodromy group of
the universal family of such quasi-smooth hypersurfaces is the subgroup of
$O^{\#-}(L)$ preserving $T$ and  inducing the identity  on $S$.\footnote{See below for the notation.} 
\end{prop*}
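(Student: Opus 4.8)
The plan is to prove the two inclusions separately. Write $M$ for the group on the right-hand side of the statement, i.e.\ the subgroup of $O^{\#-}(L)$ preserving $T$ and inducing the identity on $S$, and $\Gamma$ for the monodromy group of the universal family. The inclusion $\Gamma\subseteq M$ is the soft part; the content is $\Gamma\supseteq M$, and the engine for it is the degeneration to exceptional unimodal singularities advertised in the introduction.

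\emph{The inclusion $\Gamma\subseteq M$.} I would first invoke the description of the Picard lattice $S$ obtained in \S\ref{ssec:Invs}: for a general $X\in\cF_{a,b}$ the Picard group of the minimal resolution $\widetilde X$ is generated by the pullback of the hyperplane-type class $\cO_X(1)$ together with the components of the exceptional divisors over the (finitely many) singular points of $\bP(1,2,a,b)$, and those points are the same for every member. Hence a simultaneous resolution exists over $U_{1,2,a,b}$ and each generator of $S$ is a globally flat section of $R^2\pi_*\bZ$, so $\Gamma$ acts as the identity on $S$; being a group of isometries it then preserves $T=S^\perp$ automatically. Since $L$ is unimodular (the members are regular and simply connected, and blowing up adds no torsion), an isometry trivial on $S$ is automatically trivial on the discriminant form of $T$ via the anti-isometry $A_S\cong A_T$, so $\Gamma$ satisfies the stable-orthogonal ($\#$) condition; and $\Gamma$ satisfies the orientation condition singled out by the superscript in $O^{\#-}(L)$ by the usual connectedness argument for the period domain. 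Collecting these facts gives $\Gamma\subseteq M$.

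\emph{The inclusion $\Gamma\supseteq M$.} The idea is to exhibit, for each of the types (a)--(d), members $X_0$ in the discriminant $\bP^N\setminus U_{1,2,a,b}$ that are quasi-smooth except at a single point where they acquire one of Arnol'd's fourteen exceptional unimodal hypersurface singularities, with the rest of the geometry generic. Such degenerations are useful for two reasons. First, the positive index of the Milnor lattice of an exceptional unimodal singularity equals $2$, so its vanishing sublattice embeds into the transcendental lattice $T$ of a nearby fibre, and for a sufficiently general such degeneration the vanishing cycles are non-algebraic, so the embedding lands in $T$ and not merely in $L$. Second, because $\alpha(X)=1$ and the complete linear system $|\cO(d)|$ on $\bP(1,2,a,b)$ surjects onto the Tjurina algebra of the singularity in question, the family $\cF_{a,b}$ restricted to a transverse slice at $X_0$ is a versal deformation of the singularity; hence the discriminant is smooth at $X_0$ and met transversally, and the local monodromy of $\cF_{a,b}$ there realizes the entire monodromy group of the exceptional unimodal singularity. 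By Ebeling's determination of these groups, the latter is generated by the Picard--Lefschetz reflections $R_\delta$ in the $(-2)$ vanishing cycles, acts on them with a single orbit, and is an explicit finite-index subgroup of the orthogonal group of the Milnor lattice. Choosing in each of the four cases one or two such degenerations whose vanishing sublattices together span a primitive finite-index sublattice of $T$, one then obtains inside $\Gamma$ a subgroup generated by $(-2)$-reflections $R_\delta$ with $\delta\in T$ whose roots span a finite-index sublattice of $T$.

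\emph{From finite index to equality, and the main obstacle.} A standard criterion --- in the circle of ideas of Ebeling's monograph and of Beauville's analysis of the monodromy of universal families of hypersurfaces --- then shows that a group generated by enough $(-2)$-reflections with roots spanning a finite-index sublattice of $T$ already has finite index in $M$; combining this with $\Gamma\subseteq M$ and computing the relevant index from the explicit discriminant forms of $S$ and $T$ in each case forces $\Gamma=M$. The abstract scheme is routine. The genuine work is twofold: (i) checking, type by type, that the chosen exceptional-unimodal degenerations actually occur inside $\cF_{a,b}$ and that the family is versal there --- i.e.\ that the relevant discriminant component is smooth at $X_0$ and transverse to the slice, so that ``local monodromy of the family $=$ monodromy of the singularity'' --- which is where the specific weights $1,2,a,b$ and the coprime-odd hypothesis on $a,b$ are used; and (ii) the lattice bookkeeping that identifies which exceptional unimodal singularities are available and upgrades ``finite index'' to ``equality''. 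I expect step (ii) to be the main obstacle.
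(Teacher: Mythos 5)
Your overall architecture (the soft inclusion $\Gamma\subseteq M$, plus degenerations to Arnol'd's exceptional unimodal singularities for the hard inclusion) matches the paper's, which follows Beauville: the monodromy representation agrees with that of a Lefschetz pencil, the discriminant is connected so all vanishing cycles are conjugate and form a single vanishing lattice, the vanishing cycles generate $T=S^\perp$, and one exhibits explicit members with a $K_{12}$, $K_{13}$ or $K_{14}$ singularity in the chart $x_0\neq 0$ (Table~\ref{table:Sings}). Up to that point your proposal is on track, although the paper does not need your versality claim (surjection onto the Tjurina algebra); it only needs that the vanishing lattice of the exceptional singularity is non-degenerate and embeds, with its distinguished roots, into the vanishing lattice of the family inside $T$.

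The genuine gap is your endgame. You propose to conclude by showing that the reflections in the realized vanishing cycles generate a subgroup of finite index in $M$ and then to "upgrade finite index to equality by computing the index from the discriminant forms of $S$ and $T$". There is no such mechanism: $O^{\#-}(T)$ has plenty of proper finite-index subgroups (even reflection subgroups), and the discriminant forms of $S$ and $T$ carry no information about the index of the monodromy group inside $M$, so this step would simply not close. The missing idea is the notion of a \emph{complete} vanishing lattice and Ebeling's structure theorem (Theorem~\ref{thm:weylisbig}): a vanishing lattice containing the special six-root configuration $(L_{\text{\rm min}},\Delta_{\text{\rm min}})$ of Figure~\ref{fig:specvanlat} has Weyl group \emph{equal} to the subgroup of $O^{\#-}$ acting trivially on the discriminant group. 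Since the Milnor lattices $T^1_{2,3,7}$, $T^1_{2,3,8}$, $T^1_{2,3,9}$ of $K_{12}$, $K_{13}$, $K_{14}$ admit root bases making them complete (Example~\ref{exm:Arnold}), a single such degeneration per case makes the vanishing lattice of the family inside $T$ complete, and the equality $\Gamma=M$ follows at once, with no index count and no need to choose several degenerations spanning a finite-index sublattice of $T$. In effect, the step you flag as "the main obstacle" is exactly the content you have not supplied, and the argument you sketch in its place would fail.
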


\subsection*{3}   Our  examples  all  are simply connected and  the Hodge structure on the middle
cohomology group   looks like that of a K3 surface
(see Proposition~\ref{prop:InsAndSIngs}). In particular, the  period domain
is of similar type  (see  formula~\eqref{eqn:PerDom}).

It is well known that for a Kuranishi family of  K3 surfaces  
the period map is always an immersion and so infinitesimal Torelli holds.
In the setting of elliptic surfaces having  multiple fibers this is no longer the case
according to an observation  of K.\ Chakiris:

\begin{thm*}[\cite{ChakCounter}] Simply connected  elliptic surfaces with
$p_g>0$ and having one or at most two multiple fibers (with co-prime
multiplicities)  are counterexamples to the Torelli theorem: the fiber of the
period map for its Kuranishi  family  is positive dimensional.
\end{thm*}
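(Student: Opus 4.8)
The plan is to realize each such surface as a logarithmic transform of its Jacobian elliptic surface, to observe that the period point only depends on that Jacobian, and then to show that the logarithmic transform data contributes genuine moduli. Write $f\colon X\to C$ for the elliptic fibration. Since $X$ is simply connected we have $C=\bP^1$, and by hypothesis $f$ has either a single multiple fibre (of some multiplicity $m_1$) or exactly two multiple fibres of coprime multiplicities $m_1,m_2$ — precisely the configurations compatible with $\pi_1(X)=0$. Let $f^J\colon J(X)\to\bP^1$ be the associated Jacobian elliptic surface; it has a section, satisfies $\chi(\cO_{J(X)})=\chi(\cO_X)$, and (by Kodaira's analysis) has the same singular fibres as $f$ but no multiple fibres, the multiple fibres of $f$ being of type ${}_m I_0$ and sitting over points where $f^J$ is smooth.

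First I would show that $X$ and $J(X)$ have the same period point. Let $\Delta\subset\bP^1$ be the finite set of points carrying a singular or a multiple fibre and put $C^\circ=\bP^1\setminus\Delta$, with $j\colon C^\circ\hookrightarrow\bP^1$. Over $C^\circ$ the fibration $X$ is a torsor under $J(X)$, so $R^1f_*\bQ\cong R^1f^J_*\bQ$ as local systems on $C^\circ$; the multiple fibres, being of type ${}_m I_0$, contribute trivial local monodromy, and the remaining singular fibre types agree with those of $J(X)$. Hence $j_*R^1f_*\bQ$ and $j_*R^1f^J_*\bQ$ coincide as constructible sheaves on $\bP^1$, and with them $H^1(\bP^1,j_*R^1f_*\bQ)$. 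By the usual Leray spectral sequence analysis for an elliptic surface, the transcendental part of $H^2$ is a canonically defined sub-Hodge structure of $H^1(\bP^1,j_*R^1f_*\bQ)$ (up to Tate twist), with algebraic complement supported on the fibres; since all of this data agrees for $X$ and $J(X)$, their transcendental Hodge structures are canonically isomorphic. For the marked period map one additionally checks that coprimality of the multiplicities keeps $H^2(X,\bZ)$ unimodular and the transcendental lattices identified with that of $J(X)$, so that the two period points literally coincide.

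Next I would invoke Kodaira's theory of logarithmic transformations: $X$ is recovered from $J(X)$ by performing, at each point $p_i$ supporting a multiple fibre, a logarithmic transformation of order $m_i$ together with a (finite) choice of torsion gluing datum in the fibre $J(X)_{p_i}$. Fixing $J(X)$ and the gluing data but letting $p_1$ move over $C^\circ$ produces a one-parameter family $\{X_t\}$, all of which have Jacobian $J(X)$ and hence, by the previous step, the same period point. For $J(X)$ general these surfaces are pairwise non-isomorphic: an isomorphism $X_t\cong X_{t'}$ would induce an automorphism of the fibration $J(X)\to\bP^1$ carrying $p_t$ to $p_{t'}$, and for a general Jacobian elliptic surface of the relevant type such automorphisms form a finite group (they lie in the subgroup of $\mathrm{PGL}_2$ stabilizing $\Delta$). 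Thus in moduli there is at least a one-dimensional family of such surfaces along which the period is constant. Finally, for any modular family $\cX\to B$ the map $B\to\mathcal M$ to moduli is dominant and the period map factors through it, so the fibre of the period map over (a point of $B$ lying above) a general point of $\mathcal M$ contains a positive-dimensional subvariety — the desired positive-dimensional fibre.

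The main obstacle is the first step: pinning down that a logarithmic transformation at coprime multiplicities on a simply connected surface changes neither the transcendental Hodge structure nor, for the marked period map, the transcendental lattice. The torsor description over $C^\circ$ disposes of the rational Hodge-theoretic content cleanly, but the integral bookkeeping near the multiple fibres, and the check that the algebraic part of $H^2$ is genuinely unaffected, rely on Kodaira's local classification of multiple fibres. The roles of the hypotheses are then transparent: $p_g>0$ forces $H^{2,0}\neq0$, so the period map is nonconstant and the statement has content, while coprimality (with at most two multiple fibres) is exactly what keeps the surfaces simply connected and their period lattices canonically comparable; in the case $m_1=2$ of a single double fibre this is the mechanism responsible for the $1$-dimensional kernel found for classes (a) and (b).
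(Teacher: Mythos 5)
Your proposal follows the geometric route that Chakiris himself sketched (keep the Jacobian fixed, move the points where the logarithmic transformations are performed), which is genuinely different from what the paper does: the paper's proof (Proposition~\ref{prop:ChaksThm}) is purely infinitesimal, identifying the kernel of the differential of the period map with $H^0(\Omega^1_X\otimes\cO_K)\cong\bC$ via the normal-bundle and decomposition sequences along the multiple fiber, and thereby avoids all global questions about log transforms; the geometric mechanism you use appears in the paper only as an explanatory remark (Remark~\ref{rmk:OnLogTfs}), not as a proof. Unfortunately, as written your argument has genuine gaps at exactly the points that make Chakiris's sketch hard to complete.

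First, ``fixing $J(X)$ and the gluing data but letting $p_1$ move'' does not obviously produce surfaces lying in the modular family. Precisely because $p_g>0$, the group classifying torsors with Jacobian $J(X)$ has a continuous part coming from $H^1(\bP^1,R^1f^J_*\cO)\cong\bC^{p_g}$, and only the torsion classes are algebraic: a logarithmic transform at a smooth fibre of a projective surface need not be projective (in the local model $\Delta\times E$ a multisection of $J(X)$ does not extend across the new multiple fibre -- its transform accumulates non-analytically on it), so projectivity of your $X_t$ is a global arithmetic condition you have not verified. You must either prove your specific one-parameter family stays projective (the paper's remark does this only \emph{after} the Hodge/lattice comparison, i.e.\ after the very statement you are trying to establish), or pass to algebraic representatives furnished by Ogg--Shafarevich theory, at which point holomorphic dependence on $t$ is lost and ``positive-dimensional fibre of the period map'' needs a separate argument (e.g.\ uncountably many pairwise non-isomorphic members with the \emph{same point} of $\Gamma\backslash D$, which is more than abstract isomorphy of rational Hodge structures). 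Second, the constancy of the period map along the family is argued only pointwise and rationally: for the period map of the family one needs the identification of the transcendental part of $H^2(X_t)$ with a fixed sub-Hodge structure of $H^1(\bP^1,j_*R^1f^J_*\bQ)$ to be flat in $t$ and compatible with the integral structure and marking; you flag this yourself as ``the main obstacle'' but do not close it. (Minor points: multiple fibres need not be of type ${}_mI_0$, only multiples of smooth or $I_b$ fibres, though this is easily repaired; and unimodularity of $H^2(X,\bZ)$ is automatic from Poincar\'e duality -- coprimality is what gives simple connectivity.) The non-isomorphy step via uniqueness of the elliptic fibration and finiteness of fibration automorphisms of a general $J(X)$ is fine. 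The paper's infinitesimal computation is exactly the device that bypasses both gaps, at the price of treating only one multiple fibre with $p_g=1$.
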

The proof in loc.\ cit.\ is only sketched.  We therefore decided to
give a (simple) proof in the case of one multiple fiber (the situation
occurring in our examples), see Proposition~\ref{prop:ChaksThm}.

In our setting these results need to be used with care since our deformations are restricted
to the ones that keep the surface in a fixed weighted projective space.
As we show in ~\ref{ssec:Manual}, the period map for  the
Kuranishi family of  the basic examples   in the sense of Lemma~\ref{lem:kuranishi}), as  given above,  
has a $1$-dimensional kernel.    However, 
as shown in ~\ref{ssec:KernelCode}, this is not generically the case:
\begin{prop*}[=Proposition~\ref{prop:GenPerMap}]  
The period map for  the
Kuranishi family  (in the sense of   Lemma~\ref{lem:kuranishi})   
   for a general   type 1--4
surface  is an immersion.
\end{prop*}

An  interesting arithmetic consequence of this result has been signaled  to us by Ben Moonen, namely  the validity of the Tate 
 conjecture as well as the Mumford--Tate conjecture for each of the present surfaces. See Corollary~\ref{cor:moonen}.

\subsection*{4}  
 The results in the paper involving the structure of the period domain as well as  the behavior of the period map 
use precise information about the fibers of the genus 1  fibrations on a general surface from each of the four classes.
The determination of the fiber types is relatively standard and  has been facilitated by  calculations in \textsc{SageMath}. Together with the obvious 
bisection which comes from the resolution of singularities this gives a sublattice of the Picard lattice but in general it is hard 
to determine  whether this is the entire Picard lattice.
\par
Given suitable models in our families which are defined  over the integers,  counting points on reductions  at one or two "good"  primes 
gives a by now standard method to determine  the Picard number of a general member of a family.
Originally we applied another trick to show this for type 1 surfaces, but this trick cannot be applied to
type 2 surfaces. Thanks to the competence of W.\ Nijgh 
the type 2  surfaces could be handled by applying
the first mentioned method. 
From the way this proof was set up, we only recently found out  that a general type 2  surface 
is birational to  a type 1 surface  of the sort for which we had shown that the Picard number is generally equal to $2$.
Since this birational transformation lowers the Picard number by $1$,   the original surface has generally Picard 
rank $3$. See Remark~\ref{rmk:OnNormForms}.1 and Remark~\ref{rmk:bToa}.

Since  there are many possible birational transformations, this would not easily have been discovered before  
having gone through the details of Nijgh's approach. Therefore it was clear to us that  his proof  forms a natural companion to our
paper and so we placed it  in ~\ref{app:nijgh}. We want to mention that
  he furthermore proves  (see Remark~\ref{rmk:OnTypeA}.(2)) that a similar but much simpler 
  approach also implies that a general type 1 surface has Picard number $2$.

Using the determination  of the general Picard lattice for the four types 1--4,   in Proposition~\ref{prop:translatt},
 we calculate   the transcendental lattice of the generic surfaces.

\subsection*{5}  As in the case of the Kunev example, in the properly
elliptic case there is a unique canonical divisor $K$ on the surface $X$
and one may associate to the pair $(X,\text{supp}(K))$ the mixed Hodge
structure on $H^2(X\setminus \text{supp}(K))$.  We arrive in this way
at two further results: first of all Theorem~\ref{thm:Main}, stating
that for the associated \emph{mixed period map}  in cases where ordinary infinitesimal Torelli fails,
the infinitesimal
Torelli theorem does hold for the mixed Hodge structure, and, secondly Corollary~\ref{cor:Main}
which states that a wide class of related variations is rigid in the
sense of \cite{HolBisect}, that is, all deformations of the mixed
period map keeping source and target fixed are trivial.
 
\subsection*{6}  
In Section~\ref{sec:KSBA} we give an application to 
  certain compactifications of moduli  spaces of surfaces of general type with $K^2=1$, $p_g=2$ and $q=0$
 as a follow up of \cite{GPSZ}.

\subsection*{Acknowledgements} {\small  We thank
Patricio Gallardo and Luca Schaffler for their comments on an earlier
version of the manuscript, Miles Reid and Matthias Sch\"utt for their
help in understanding the geometry of the elliptic pencils,
Wolfgang Ebeling for answering questions about singularities and their
monodromy groups, and J\'anos Koll\'ar for pointing out the reference
\cite{k-bmy}. 
}
   \subsection*{Conventions and Notation}
   {\small 
    \begin{itemize}
\item A lattice is a free $\bZ$-module of finite rank equipped with a
  non-degenerate symmetric bilinear integral form which is denoted
  with a dot.
\item A rank one lattice $\bZ e$ with $e.e= a$ is denoted $\langle
  a\rangle$, orthogonal direct sums by $\operp$. Other standard
  lattices are the hyperbolic plane $U$, and the root-lattices $A_n,
  B_n$ ($n\ge 1$), $D_n$ ($n\ge 4$) and $ E_n$, $n=6,7,8$.
\item If one replaces the form on the lattice $L$ by $m$-times the
  form, $m\in \bZ$, this scaled lattice is denoted $L(m)$.
\item $A(L)=L^*/L$ is the discriminant group of a lattice $L$, $b_L$
  the discriminant bilinear form. In case $L$ is even, $q_L$ denotes
  the discriminant quadratic form. See \S~\ref{ssec:OnLats}.
\item The orthogonal group of a lattice $L$ is denoted $O(L)$,
  $O^{\# }(L)$ is the subgroup of isometries inducing the identity on
  $A(L)$, $O^{ \#\pm }(L)$ is the subgroup of $O^{ \# }(L)$ consisting of
  isometries with signed spinor norm $ 1$.  See \S~\ref{sec:GlobMono}.
 \item We denote  weighted projective spaces in the usual fashion as
  $\bP(a_0,\dots,a_n)$ with weighted homogeneous coordinates, say
  $x_0,\dots, x_n $. Let $I\subset \set{0,\dots,n}$.
  The weighted subspace obtained by setting the coordinates in
  $\set{0,\dots,n}\setminus I$ equal to zero is denoted $\mathsf P_I$ so that
  the coordinate points are $\mathsf P_0,\dots, \mathsf P_n$.

  A degree $d$ polynomial with such weights has \textbf{\emph{symbol}}
  $(d,[a_0,\dots,a_n])$.  Let $F$ be a polynomial with this symbol. We set
  \begin{align*}
    \Omega_n &= \sum_{j=0}^n x_j dx_0\wedge dx_1\cdots\wedge \widehat{dx_j}\wedge
         \cdots \wedge dx_n,\\
     J_F &= (\del F {x_0},\dots ,\del F {x_n})\subset \bC [x_0,\dots,x_n],
         \text{ the Jacobian ideal of }F, \\
     R_F &= \bC [x_0,\dots,x_n]/J_F,   \text{ the Jacobian ring of $F$} ,
            \quad R^k_F \text{ degree $k$ part of } R_F.
\end{align*}
\item We often do not write coefficients in front of monomials and so we use
  the shorthand  $\sum_{k_0,\dots,k_n}  x^{k_0}_{0}\cdots x^{k_n}_{n}$ instead of
  $\sum_{k_0,\dots,k_n}  a_{k_0,\dots,k_n} x^{k_0}_{0}\cdots x^{k_n}_{n}$.
\end{itemize}
}

 \section{Weighted projective hypersurfaces}
 \label{sec:genwphs}

 \subsection{Generalities} In this subsection we recall some results from the
 literature on hypersurfaces in weighted projective spaces, e.g.
 \cite{DolWPV,IFWings,steen}. Recall that $\bP:=\bP(a_0,\dots,a_n)$ is
 the quotient of $\bC^{n+1}\setminus\set{0}$ under the $\bC^*$-action
 given by $\lambda (x_0,\dots,x_n)=
 (\lambda^{a_0}x_0,\dots,\lambda^{a_n}x_n)$.  We may always assume
 that $a_0\le a_1\le \cdots\le a_n$.  The affine piece $x_k\not=0$ is
 the quotient of $\bC^n$ with coordinates
 $(z_0,\dots,\widehat{z_k},\dots,z_n)$ by the action of $\bZ/a_k\bZ$
 given on the coordinate  $z_i =x_i/x_k^{(a_i/a_k)}$  by $\rho^{a_i}z_i$,
 where $\rho$ is a primitive $a_k$-th root of unity. Observe that in
 case $a_0=1$, the coordinates $z_j=x_j/x_0, j=1,\dots,n$ are actual
 coordinates on the affine set $x_0\not=0$; there is no need to divide
 by a finite group action.

 In general $\bP$ has cyclic quotient singularities of transversal
 type $\frac 1 h (b_1,\dots,b_k)$, i.e., these are the image of
 $0\times \bC^\ell \subset \bC^{k}\times \bC^\ell $, where $\bZ/h\bZ$
 acts on $ \bC^{k}$ by $\zeta (x_1,\dots,x_k)=
 (\zeta^{b_1}x_1,\dots,\zeta^{b_k}x_k)$, $\zeta$ a primitive $h$-th
 root of unity.  More precisely, the simplex
 $x_{j_1}=\cdots=x_{j_k}=0$ is singular if and only if the set of
 weights that result after discarding $a_{j_1},\dots, a_{j_k}$ are not
 co-prime, say with gcd equal to $h_{j_1,\dots,j_k}$, and then
 transversal to the simplex one has a singularity of type
 \[\frac 1{h_{j_1,\dots,j_k}} (a_{0},\dots, \widehat{a_{j_1}}, \dots,
   \widehat{a_{j_k}},\dots,  a_ {n}).\] 
So in case any $n$-tuple from the collection $\set{a_0,\dots,a_n}$ of
weights is co-prime, the only possible singularities occur in
codimension $\ge 2$. We call such weights \textbf{\emph{well formed}}
and in what follows we shall assume that this is the case.

 A hypersurface $X=\set{F=0}$ in $\bP$ is quasi-smooth if the
 corresponding variety $F=0$ in $\bC^{n+1}$ is only singular at the
 origin. This implies that the possible singularities of quasi-smooth
 hypersurfaces come from the singularities of $\bP$. Such a
 hypersurface has at most cyclic quotient singularities, i.e. it is a
 $V$-variety.  A hypersurface of degree $d$ in $\bP$ is called
 \textbf{\emph{well formed}} if its weights are well formed and if
 moreover $h_{ij}=\gcd(a_i,a_j)$ divides $d$ for $0\le i<j\le n$. All
 our examples are well formed hypersurfaces.
 To test if $F=0$ is quasi-smooth one uses the Jacobian criterion: the 
 only solution to $\nabla F (\underline x)=0$ is $\underline x=(x_0,\dots,x_n)=0$.

We quote a result implied by Fletcher's statement \cite[Theorem 8.1]{IFWings}. We use it to
exclude types that do not give a quasi-smooth weighted hypersurface:

\begin{lemma}  \label{lem:Lat}  Given a weighted projective space $\bP=\bP(a_0,\dots,a_n)$ and an integer $d$ with $d>\max{a_j}$,
 let $A$ be the set of weights dividing $d$ and $B$ the remaining set of weights.   
  Suppose that  a quasi-smooth degree $d$ hypersurface  in $\bP$ exists.
 Then the set of weights $\set{a_0,\dots,a_n}$ satisfies the conditions
 \begin{enumerate}
\item for each $\beta \in B$ there is   a weight $\gamma$ and  some positive integer $r$ 
such that   $d= r \beta+ \gamma$.
\item  no weight  appears more than once as such a remainder  $\gamma$.
\end{enumerate}
   \end{lemma}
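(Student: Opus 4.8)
The plan is to reduce the statement to monomial combinatorics and then read off both conditions by testing quasi-smoothness along the coordinate strata of $\bP$. Recall that $X=\{F=0\}$ is quasi-smooth precisely when the affine cone $\widehat X=\{F=0\}\subset\bC^{n+1}$ is smooth away from $0$; by the Euler relation $\sum_j a_jx_j\,\del F{x_j}=d\cdot F$ one has $\{\nabla F=0\}\subseteq\widehat X$, so quasi-smoothness is equivalent to $\{\nabla F=0\}=\{0\}$, where $\nabla F=(\del F{x_0},\dots,\del F{x_n})$. The elementary observation I would use repeatedly is that the restriction of $\del F{x_k}$ to the coordinate line $\mathsf P_{\{i\}}$ (or to a larger coordinate subspace $\mathsf P_I$) only ``sees'' monomials of $F$ of the shape $x_i^{r}x_k$ (resp.\ $x^{m}x_k$ with $\operatorname{supp}(m)\subseteq I$); hence quasi-smoothness along such a stratum forces $F$ to contain monomials of prescribed shape. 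Both assertions concern the weights alone, so it suffices to produce these monomials for one quasi-smooth $F$.

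For (1) I would fix $\beta=a_i\in B$, i.e.\ $a_i\nmid d$. Then $F$ contains no monomial which is a pure power of $x_i$ (its degree would be a multiple of $a_i$), so $F$ vanishes identically on the line $\mathsf P_{\{i\}}$, which therefore lies in $\widehat X$. Evaluating at the point $\mathbf p$ with $x_i=1$ and all other coordinates $0$, quasi-smoothness gives $\del F{x_k}(\mathbf p)\neq 0$ for some $k$; since $\del F{x_i}$ vanishes on $\mathsf P_{\{i\}}$ (no pure powers), $k\neq i$, and $\del F{x_k}(\mathbf p)\neq 0$ is possible only if $F$ contains a monomial $x_i^{r}x_k$, whose degree equation $r\,a_i+a_k=d$ forces $r\geq 1$ because $d>\max_j a_j\geq a_k$. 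Taking $\gamma=a_k$ gives (1).

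For (2) I would reformulate the conclusion as the existence of a system of distinct representatives. Build the bipartite graph $G$ with left vertices the indices $i$ such that $a_i\in B$, right vertices $\{0,\dots,n\}$, and an edge between $i$ and $k$ whenever $F$ contains a monomial $x_i^{r}x_k$ with $r\geq1$; by (1) every left vertex has an edge (and the edge $i$–$i$ never occurs). A choice of pairwise distinct remainders $\gamma$ is exactly a matching saturating the left side, so by Hall's theorem it is enough to show $|N(S)|\geq|S|$ for every set $S$ of left vertices. Here the geometry re-enters: a violation of Hall's condition should force a singular point of $\widehat X$ on a coordinate stratum attached to $\mathsf P_S$, contradicting quasi-smoothness. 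Carrying this out is precisely the stratum-by-stratum bookkeeping in Iano-Fletcher's quasi-smoothness criterion \cite{IFWings}; in fact the whole lemma is the special case of that criterion obtained by taking the index sets $I=\{i\}$ with $a_i\in B$ (yielding (1)) and $I$ running over subsets of the index set of $B$ (yielding (2)), so in the write-up I would quote \cite{IFWings} for the general statement and specialise.

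The routine part is (1), a single-point computation. The main obstacle is the subset estimate underlying (2): passing from ``too few admissible tails for the variables indexed by $S$'' to an honest singular point of the cone is delicate, because along $\mathsf P_S$ the partial $\del F{x_k}$ also picks up monomials $x^{m}x_k$ with $m$ supported on several indices of $S$, so a naive rank count on $\mathsf P_S$ does not close the argument and one must run Iano-Fletcher's induction over strata. This is the step I would not reprove from scratch but import from \cite{IFWings}.
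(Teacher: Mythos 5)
Your proof of part (1) is correct and complete: evaluating the gradient at the coordinate point $\mathsf P_i$ for $a_i\in B$ is exactly the right computation, and it is in fact more self-contained than the paper, which offers no argument at all but simply quotes Iano-Fletcher's quasi-smoothness criterion \cite[Thm.~8.1]{IFWings} and remarks that the lemma is its ``singleton'' (semi quasi-smooth) part.

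Part (2) is where there is a genuine gap, and it is not closed by the deferral to \cite{IFWings}. The Hall/SDR reformulation is the right way to read (2), but Hall's condition for your bipartite graph does not follow from Fletcher's stratum conditions: for a subset $I$ of $B$-indices, Fletcher's alternative (b) is only in force when \emph{no} degree-$d$ monomial supported in $I$ exists, and even then it produces $|I|$ monomials of the form $x^{M_\mu}x_{e_\mu}$ with $M_\mu$ an \emph{arbitrary} monomial in the $I$-variables, not a pure power $x_i^{r}$; it therefore gives no lower bound on the neighbourhoods $N(S)$ of your graph, whose edges record only pure-power monomials $x_i^{r}x_k$. So the claimed ``specialise Fletcher over subsets of $B$'' step fails, and since you explicitly do not reprove the subset estimate, (2) is left unproved. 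Worse, no argument can establish (2) at the stated level of generality: take $\bP(1,5,7,4)$ and $d=36$. Every subset $I$ admits an interior degree-$36$ monomial ($x_0^{36}$, $x_3^9$, $x_1^3x_2^3$, $x_1^4x_3^4$, $x_2^4x_3^2$, \dots), and the singletons for the weights $5$ and $7$ are served by $x_1^7x_0$ and $x_2^5x_0$, so by Fletcher's criterion the general hypersurface is quasi-smooth; yet the only admissible remainder for \emph{both} $\beta=5$ and $\beta=7$ is $\gamma=1$ (since $36-4$ and $36-7$ are not multiples of $5$, and $36-4$, $36-5$ are not multiples of $7$), so the weight $1$ is forced to occur twice and condition (2) fails. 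Thus (2) is not a consequence of quasi-smoothness alone; it requires the special shape of the paper's situation (weights $1,2,a,b$ with $a,b$ odd and coprime, $d=a+b+4$, where the relevant mixed monomials are excluded for degree reasons), and that supplementary argument appears neither in your write-up nor, for that matter, in the paper, which also rests (2) solely on the citation. Your explicit Hall reduction at least makes visible exactly where the extra input is needed.
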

     
   \begin{example}
   We give two examples of surfaces having  type  $(d;(1,a_1,a_2,a_3))$  and which will be called 
   \textbf{\emph{basic degree $d$ quasi-smooth hypersurfaces}} in $\bP(1,a_1,a_2,a_3)$.
 \\
 1. Assume that $A=\set {1,a_1,a_2}$ and $d=q_3 a_3+a_j$, $a_j\in A$.
 Then  $P=x_0^d+ x_1^{d  /a_1}+ x_2^{d/a_2}+ x_{a_j}x_3^{q_3 }$  is quasi-smooth
 as follows from the Jacobian criterion.
 \\
 2. Assume that $A=\set{1,a_1}$ and $d=q_2 a_2 +a_j$,    $a_ j\in A$,   $d=q_3 a_3 +a_k$,  $ a_k\in A$ but $k\not=j$.
  Then  $P=x_0^d+ x_1^{d /a_1}+ x_{ j}  x_2^{q_2} + x_{ k} x_3^{q_3} $  is quasi-smooth.
   \end{example}

\medskip

In what follows, especially in \S~\ref{sect:Elliptic}, the following
remark will be used tacitly.
\begin{remark}\label{rmk:OnAmpl} By \cite[Thm. 3.3.4]{DolWPV}, if $X$ is
quasi-smooth of amplitude $\alpha(X)$, then the sheaf $\cO_X(\alpha(X))$ is
the canonical sheaf of $X$.  However,  contrary to what happens in ordinary projective space, if $\alpha\ge 1$ the minimal model of  $X$
need not be of general type,  as we shall see in  Proposition~\ref{prop:Main1}.
\end{remark}

\subsection{On the Hodge decomposition of weighted hypersurfaces}  
\label{ssect:OnSteen}
A result by J.~Steenbrink~\cite{steen}  states that the Hodge decomposition for
quasi-smooth hypersurfaces $X$ of degree $d$ in weighted projective space
$\bP(a_0,\dots,a_n)$ can be stated in terms of the Jacobian ring $R_F$
using  Griffiths' residue calculus,  as in the non-weighted case. The Hodge
number $h^{n,0}(X)$ equals $\dim H^0(X,\omega_X)$, where $\omega_X$ is the
canonical sheaf.  This Hodge number can be calculated from the amplitude
$\alpha(X)$ since by \cite[Thm. 3.3.4]{DolWPV},
$h^{n,0}(X)= \dim H^0(X,\cO(\alpha(X)))$ in case $X$ is quasi-smooth.
\par
Since quasi-smooth hypersurfaces  in weighted projective space are $V$-manifolds, 
as  in the case of ordinary projective space we have:
  
\begin{lemma}[\protect{\cite[\S1]{Tu}}] The subspace $\mathsf{Def}_{ {\rm  proj}}$ of the Kuranishi
space of deformations of $X$  within $\bP(a_0,\dots,a_n)$ is smooth with tangent space
canonically isomorphic to $R_F^{d}$. \label{lem:kuranishi}
The Kuranishi family restricted to $\mathsf{Def}_{ {\rm  proj}}$ 
is called the \textbf{\emph{Kuranishi family of type $([d],(a_0,\dots,a_n))$}}.
\end{lemma}

\subsection{The four types of surfaces}
We now give the classification of the surfaces we are interested in: 

\begin{proposition}\label{prop:OurExamples} The only quasi-smooth hypersurfaces $X$ of
the form $(d,[1,2,a,b])$ with $a,b$   co-prime odd integers and such that
$d= a+b+4$ have the following characteristic:
\begin{enumerate}
\item  $(14,[1,2,3,7])$, basic quasi-smooth example
  $ x_0^{14}+ x_1^7 + x_2^4x_1+   x_3^2$;
\item  $(12,[1,2,3,5])$,  basic quasi-smooth example
  $x_0^{12}+x_1^6+x_2^4+x_1x_3^2$;
\item $(16,[1,2,5,7])$,  basic quasi-smooth example
  $x_0^{16}+x_1^8+x_0x_2^3+x_1x_3^2$;
\item $(22,[1,2,7,11])$,  basic quasi-smooth example
  $x_0^{22}+x_1^{11} + x_0 x_2^3+x_3^2$.
 \end{enumerate}
      \label{prop:GoodElliptic}
 \end{proposition}
\begin{proof}  We divide the possible cases according to the partition
$\set{1,2,a,b}=A\sqcup B$ of Lemma~\ref{lem:Lat}. We do not assume
that $a<b$ since their roles are symmetric. Indeed, the constraints
are $d=a+b+4$, $a$ and $b$ odd, and $\gcd(a,b)=1$. If $a\in A$,
according to whether $b\not\in A$ (case (A) and (B)) or $b\in A$
(case (C)) we have
\[
  d  =ka= \begin{cases}
   \text{either } & r b+   2 , \quad (A) \\
   \text{or } & r b+    1, \quad  (B) \\
    \text{or } & r b  \hspace{3em} (C).
  \end{cases} 
  \]  
Interchanging the roles of $a$ and $b$ this also covers the case $b\in A$ and so
there remains the case $2\in A$, that is $2|d$, and then  
 \[
 \begin{array}{ll}
   d  =2k & =ra +2 = sb+ 1 \text{ or }  \\
    d=2k   &=sa+1= rb+2  
    \end{array}\,   (D).  
 \]
 
  We first assume that (A) holds.  Since $d=ka=a+b+4$ we have
  \begin{equation}
   b=(k-1)a-4. \label{eqn:First}
  \end{equation}   
Since $a$ and $b$ are odd, \eqref{eqn:First} implies that $k $ and
hence $d$ is even and (A) implies that also $r$ is even. Put
$k=2\kappa$, $r=2\rho$. We rewrite (A) as $2\rho \kappa a
-(a+4)\rho-a\kappa +1=0$ and hence
\[
(2\rho-1)[a(2\kappa- 1)- 4]= a+2
\]
and we can test low values of $a$. For $a=3$ this reads $(2\rho-1)(6\kappa -7)=5$
with only solution $(\rho,\kappa)= (1,2)$ which yields $b=5$, $d=12$.  For $a=5$
one gets $(2\rho-1)(10\kappa -9)=7$ with solution $(4,1)$ which yields $b=1 $
which can be discarded.  For $a=7$ one gets $(2\rho-1)(14\kappa -11)= 9$ with
solution $(2,1)$ which yields $b=3$ and $ d=14$.  For $a=9$ one gets
$(2\rho-1)(18\kappa -12)= 11$ which has no solution.  There are no
other solutions. To see this, write
\begin{align}
\label{eqn:Second}
(\rho( 2\kappa-1)-\kappa) a= 4\rho -1.
\end{align} 
For $\rho=1$ the equation \eqref{eqn:Second} gives $(2\kappa-2) a=6$
which gives back the solution $a=3, b=5$ and so we may assume $\rho \ge 2$.
We may also use that $a\ge 11$. By \eqref{eqn:Second} this gives
$(2\rho\kappa -\rho-\kappa) \cdot 11 \le 4\rho -1$, or, multiplying by $2$,
\[
 (2\rho-1)(22\kappa-15) \le 13
 \]
and so $13\ge (2\rho-1)(22\kappa-15) \ge 66 \kappa -45$ which has no positive
integer solution.

Case (B) has solution $(22,[1,2,7,11])$ with $a=11$, $k=2$, $b=7$, $r=3$. This
follows as in case (A). Here we set $k=2\kappa$, $r=2\rho+1$ and obtain
\[
 2\rho [a(2\kappa- 1)- 4] = a+3
\]
As before, the smallest value of $a$ with a solution is $a=11$. There 
are no solutions with $a\ge 13$.  To see this
we use the analog of \eqref{eqn:Second} which reads
\[
(4\rho\kappa -2\rho -1)a= 8\rho-3 
\]
and from $a\ge 13$ we derive
\[
\rho(52\kappa-34)\le 10
\]
which is  not possible for positive integers $(\rho,\kappa)$.

Case (C) implies $ka= rb= r[ k a -4]$, which leads to  $(r-1) k  a=4r$ and since
$k $ and  $r $ must be even (recall that $d=a+b+4=ra=kb$  with $a,b$ odd), which
leads  to  a contradiction.

In case (D), eliminating $a$ and $b$ and substituting in $2k=d=a+b+4$, we find
$ (2k -4)rs-2k(r+s)+(2s+r)=0$ with $r$ even  and $s$ odd which can be rewritten
as 
\[
[(k-2)(r-1)-1][2(k-2)(s-1)-3]= (k-1)(2k-1).
\]
Since $r\ge 2$ and $s\ge 3$, $k=8$ is the smallest value of $k$ with
solution $(r,s)=(2,3)$ which leads to $(16,[1,2,5,7])$. Since $r\ge 3$ and
$s\ge 2$ we get the inequality $(4k-11)(k-3)\le (k-1)(2k-1)$ which gives
\[
(k-2) (k-8)\le 0
\]
which has no positive integer  solutions $>8$.
\end{proof}

\section{The universal family:  normal forms, moduli, global monodromy}

The collection of degree $d$ weighted hypersurfaces in
$\bP=\bP(a_0,\dots,a_n)$ form an ordinary projective space $\bP^N$ in
a natural way by considering the $N+1$ coefficients in front of all
possible monomials. The quasi-smooth hypersurfaces form a Zariski-open
subset $U_{a_0,\dots,a_n}$ of this projective space.  The tautological
family $\cF_{a_0,\dots,a_n}$ of degree $d$ quasi-smooth hypersurfaces
over $U_{a_0,\dots,a_n}$ is called the corresponding
\textbf{\emph{universal family}}. The group $\widetilde G$ of
substitutions $x_j\mapsto p_j(x_0,\dots,x_j)$, $j= 0,\dots,n$, where
$p_j$ is weighted homogeneous of degree $a_j$, acts on
$\bP(a_0,\dots,a_n)$.  Since $\lambda\in \bC^*$ sending
$(x_0,\dots,x_n)$ to $(\lambda^{a_0} x_0, \lambda^{a_1}x_1,\dots,
\lambda^{a_n}x_n)$ multiplies each weighted homogeneous polynomial $F$
of degree $d$ with $\lambda^d$, the group $G=\widetilde G/\bC^*$
acts effectively on hypersurfaces. The embedding of the subgroup
$\bC^*\subset \widetilde G$ is due to the weights and so will be
referred to as the \textbf{\emph{$1$-subtorus for the weights}}.

\subsection{Normal forms and moduli}  \label{ssec:NormForms} We show how to
obtain a quasi-projective moduli space as a certain GIT-quotient of
$U_{1,2,a,b}$.  The draw-back is that the group $G$ of weighted
projective substitutions is not in general reductive.  We can
circumvent this in our case by giving normal forms for the equation of
quasi-smooth hypersurfaces in the universal family.  On hypersurfaces
with their equations in normal form a reductive subgroup $T$ of $G$
(in fact a $3$-dimensional algebraic torus) acts effectively in such a
way that hypersurfaces in normal form are in the same $T$-orbit if and
only they are in the same $G$-orbit.
   
\begin{proposition} \label{prop:GIT} With $(a,b)\in \bZ^2$ as   in
Proposition~\ref{prop:GoodElliptic}, let $\bC[x_0,x_1,x_2,x_3]$  be the
homogeneous coordinate ring of $\bP(1,2,a,b)$. Assume that $F\in
\bC[x_0,x_1,x_2,x_3]$ defines   a quasi-smooth surface $(F=0)$ of
degree $d$ in $\bP(1,2,a,b)$ which does not pass through $(0:1:0:0)$
(i.e., the coefficient of $x_1^{d/2}$ is non-zero).   For type 4, that
is, for $(a,b)=(7,11)$, assume in addition that the coefficient of
$x_1^4x_2^2$ is non-zero.
  
Then, there exists  polynomials  $G_j$, homogeneous  of degree $j$ in  the degree $2$ monomials $x_0^2,x_1$   
 such that via the automorphism group of $\bP(1,2,a,b)$ the form $F$ can be put in the following normal
form:
\begin{enumerate}
\item  in case  $(a,b,d)=(3,7,14)$ we  have
\begin{align*}
     F &= x_1\,x_2^4 + G_0 x_0^5 \,x_2^3 + G_4(x_0^2,x_1)\,x_2^2 + \\
       &  \hspace{2em} x_0\,G_5 (x_0^2,x_1)\,x_2 + G_7(x_0^2,x_1) - x_3^2.
     \end{align*} 
\item  in case $(a,b,d)=(3,5,12)$, 
\begin{align*}
  F  &= x_1\,x_3^2 +x_0\,x_3\,G_2(x_0^3, x_2) + G_0\,x_2^4
        + G_3(x_0^2,x_1)\,x_2^2  + \\
      & \hspace{4em}  x_0\,G_4(x_0^2,x_1)\,x_2 + G_6(x_0^2,x_1),\quad G_0\not=0 . 
 	\end{align*} 
\item  in case $(a,b,d)=(5,7,16)$,   
\begin{align*}
  F  &=  x_1\,x_3^2 + x_0^4 G_1(x_0^5, x_2) x_3
         + r_0 x_0 x_2^3 + G_0\,x_1^3\,x_2^2 \\
     &  \hspace{2em} + x_0\,G_5(x_0^2,x_1)\,x_2 + G_8(x_0^2,x_1),
\end{align*} 
where $r_0$ is a non-zero constant.
\\
\item   in case  $(a,b,d)=(7,11,22)$,  
 \begin{align*}
   F   &=   \,x_0 \,x_2^3 + G_0\, x_1^4  \,x_2^2 + x_0 \,x_2 \,G_7(x_0^2,x_1)
            + G_{11}(x_0^2,x_1) - x_3^2,\quad G_0\not=0,
\end{align*}
where the coefficient of $x_0^{22}$ in $G_{11}$ is zero. 
 \end{enumerate}
In each case, the subgroup of the automorphism group $T$ of
$\bP(1,2,a,b)$ which preserves a normal form of the given type
consists of transformations of the form $x_j\mapsto c_jx_j$ with
$c_j\in \bC^*$, $j=0,1,2,3$ modulo the $1$-subtorus for the weights.
More concretely,  for types 1  and 4   this can be identified with the
subgroup of $(\bC^\ast)^3 $ consisting of triples $ (c_0,c_1,c_2 ) $
for which $c_1c_2^4=1$, $c_0c_2^3=1$ respectively, while  for types 
2  and 3   this is the  subgroup of $(\bC^\ast)^4 $ consisting of
quadruples $ (c_0,c_1,c_2,c_3) $ for which $c_1c_3^2=1$.

The stabilizer under $T$ of  a general such $F$  in all cases is the identity.

\end{proposition}

The proof of this result is relegated to ~\ref{sec:NormFormsBis}.  Note
that the supplementary condition on $x_1^{d/2}$ which is not required in
loc.\ cit. but will be used in Proposition~\ref{prop:Main1} is stable under the
group action.

\begin{remark}\label{rmk:OnNormForms}
1. A   type 2   surface is birational to a    type 1  surface: multiply the normal form 
with $x_1$ and perform the change of variables $y_0=x_0, y_1=x_1, y_2=x_2, y_3= x_1x_2$. This does not 
yield the normal form for   type 1, but it does after changing $y_3$ in $y_3+\half y_0G_2(y_0^3,y_2)$.
In the resulting normal form  the term with $y_0^{14}$ is missing, showing that after the birational transformation   the type 2  family
gives the subfamily of  type  1  where the coefficient of $x_0^{14}$ in the normal form vanishes.
\\
2.   For type  3  the coefficient of $x_1^3x_2^2$ in the normal form of
Proposition~\ref{prop:GIT}  is non-zero. Since this condition is stable under
the action of $T$, the moduli point of the basic example is on the boundary of
$\cM_{5,7}$.
Likewise, the condition on the coefficient of $x_1^4x_2^2$ for type
4  implies that the basic example can not be transformed in normal
form and so the moduli point of the basic example is on the boundary
of $\cM_{7,11}$.
\end{remark}
  
\begin{corollary} \label{cor:Moduli} (1) In each of the above cases the points in the
  Zariski-open subset $ U_{1,2,a,b} $  of degree $d=a+b+4$ quasi-smooth
  hypersurfaces in $\bP(1,2,a,b)$ are $G$-stable and
  $\cM_{a,b}=  U_{1,2,a,b }/\!/ G$ is a geometric quotient.
\\
(2)  $\cM_{a,b} $ has dimension  $18,17,16,18$  for types 1,2,3,4, 
respectively.
\end{corollary}

\begin{proof} (1) Since   for types 3 and 4   the basic examples are on the
boundary of $\cM_{a,b}$, we need to  check quasi-smoothness for at least one
surface whose moduli-point lies in the interior. This is done in
~\ref{sec:NormFormsBis}. The group $T$ acts effectively on hypersurfaces
defined by homogeneous forms in the coefficients of a weighted homogeneous
polynomial of degree $d$.  As in ordinary projective space
(cf. \cite[Prop. 4.2]{mum}), the locus of hypersurfaces that are not
quasi-smooth define in this way a "discriminant form", a $T$-invariant
homogeneous polynomial in the coefficients.  By construction this
polynomial is non-zero on $U_{1,2,a,b}$.  By definition, all points in
$U_{1,2,a,b}$ are then semi-stable. Since $T$-orbits are closed in
$U_{1,2,a,b}$ and (as in the projective setting) since a weighted
hypersurface   of types 1--4 by \cite[Theorems 2.1, 3.1]{esser}    has a finite automorphism group, the
points of $U_{1,2,a,b}$ are stable and the GIT-quotient
$U_{1,2,a,b}/\!/ G$ is a geometric quotient.
\\
(2) One counts the number coefficients of the monomials in the normal
form which are not fixed, and subtracts $2$   for types  1  and 4   and
$3$   for  the other two types. For instance,   for type   2 one finds $
3+1+4 +5+7-3=17$  and   for type 4 this becomes  $ 1+8+11 -2=18$.
\end{proof}

The universal family on $U_{1,2,a,b}$ does not descend to the
geometric quotient $\cM_{a,b}$. However, it does so over the open subset
$U^0_{1,2,a,b}\subset U_{1,2,a,b}$ corresponding to surfaces having no
automorphisms except the identity. This is a non-empty set since the
stabilizer of $T$ on the general $F$ is the identity as asserted
above.    Introduce the following notion: 
\begin{definition} A  \textbf{\emph{modular family}}  is an algebraic  family over
a smooth, quasi-projective base which is locally (in the analytic topology) isomorphic to the 
Kuranishi  family of type  $([d],(1,2,a,b))$  (see Lemma~\ref{lem:kuranishi}). \label{dfn:ModFam}
\end{definition}
 The above discussion can thus be rephrased as follows: 
\begin{corollary} The  family over $ U^0_{1,2,a,b}/\!/ G$  obtained from the universal
family of degree $d=a+b+4$ weighted hypersurfaces in $\bP(1,2,a,b)$  is   a \ modular family.    \label{cor:ModFam}
\end{corollary}

 \begin{remark}1. The standard holomorphic 2-form $\omega_F$ on $F$ given by the
residue of $x_0 \Omega_3/F$ is not fixed under the double plane involution
$x_3\mapsto -x_3$ for types  1 and 4.  One can view $\cM_{a,b}$ as the moduli
space for the pair $(F,\omega_F)$. Alternatively, we could put a non-zero
coefficient in front of $x_3^2$ and replace $T$ by a larger group
acting also on this coefficient. However then the isotropy group at a
generic double cover would always contain the double cover involution
preventing the existence of a universal family over a Zariski-open
subset of $U_{1,2,a,b}$. \\ 2. \label{rmk:OnUnivFam} The above normal
forms only generically give quasi-smooth hypersurfaces.
\\
3. As for ordinary projective spaces, the universal family of
quasi-smooth hypersurfaces of given degree is flat over the base. This
is because resolving the singularities of weighted projective space
also resolves the singularities of the hypersurfaces. The resolved
universal family being flat, also the universal family itself is flat.
\end{remark}

\subsection{Global Monodromy of the Universal Families} \label{sec:GlobMono}

\subsection*{Brief survey of singularity theory}

\par The purpose of this subsection is to investigate certain $1$-parameter
degenerations $X_t$ of quasi-smooth hypersurfaces in $U_{1,2, a,b}$ in
relation to the global monodromy of the universal family.
So we want to find a disc $D=\sett{t\in \bC}{|t|< r}$ embedded in
$\bP^N$ such that (i) $D^*=D\setminus \set{0}$ belongs to $U_{1,2,a,b}$ with
$X_t,t\in D^*$ a quasi-smooth hypersurface and (ii) $X_0$ (corresponding to
$0\in D$) has an isolated singularity at $x_0$ of some given type
suited for the calculation of global monodromy groups.  The main
object associated to $(X_0,x_0)$ is the \textbf{\emph{Milnor fiber}}
which is the intersection of $X_t$ ($|t|$ small enough) with a small
enough ball with center at $x_0$.

\par In what follows we freely quote results from W.\ Ebeling's book
\cite{monebe5}.  To understand these results we need to recall some
more lattice theory. Recall that a lattice is a free group of finite
rank equipped with a symmetric bilinear form which we denote by a dot.
A root $r$ in a lattice $L$ is a vector with $r\cdot r=-2$ and it
determines a reflection $\sigma_r$ sending $x\in L$ to $x+ (x\cdot r)
r$. The group of isometries generated by a set of roots $\Delta$ is
called its \textbf{\emph{Weyl group}} $W(\Delta)$.  Associated to
$\Delta$ is its \textbf{\emph{Dynkin diagram}}. The vertices
correspond to the roots and an edge is drawn between two edges
corresponding to roots $r,s$ if $r\cdot s=1$. In the lattices we
consider, only one other type of edge appears, namely if $r\cdot s=
-2$ one draws two dashed edges between the corresponding vertices.

\par The middle homology group of the Milnor fiber equipped with the
intersection pairing is the \textbf{\emph{Milnor lattice}} .  Its rank is the 
\textbf{\emph{Milnor number}} $\mu(X_0,x_0)$. 
Turning once around $0\in D$ induces the
monodromy-operator $T$ on $H^2(X_t,\bZ)$ as well as on the Milnor
lattice.  By \cite[\S~1.6]{monebe5} the Milnor lattice contains a
sublattice, its \textbf{\emph{vanishing lattice}} $ L=L (X_0,x_0)$.

\par In order to determine the global monodromy group, the graph of a
basic vanishing lattice, $\Delta_{\text{\rm min}} $ depicted in
Figure~\ref{fig:specvanlat} is of crucial importance. It intervenes in
the notion of a complete vanishing lattice since the notion of
vanishing lattice has a complete algebraic description:

\begin{definition}
 \begin{enumerate} 
\item 
A \textbf{\emph{vanishing lattice}} consists of a pair $(L,\Delta)$ of
a (possibly degenerate) lattice $L$ and a set of roots
$\Delta$ spanning $L$ and forming a single orbit under $W(\Delta)$.
  
\item  A vanishing lattice $(L,\Delta)$
\textbf{\emph{contains the vanishing lattice}} $(L',\Delta')$ if $L'$ is a
primitive sublattice of $L$ and $\Delta'\subset \Delta$.

\item A vanishing lattice $(L,\Delta)$ is \textbf{\emph{complete}} if
it contains $ (L_{\text{\rm min}}, \Delta_{\text{\rm min}})$.
\end{enumerate}
\end{definition}

\begin{figure}[hbt]
 \begin{center}
  \begin{tikzpicture}[scale= 0.65]
  \begin{scope}[ shift={( 0, -2)}]
   \draw[fill=black]  (-4,4) circle [radius=0.15cm]; 
  \draw[semithick]  (-4,4)--(-2,4); 
   \node at (-4.4,4)  {$r_1$}; 
   \node at (-2,4.5)  {$r_2$}; 
    \draw[fill=black]  (-2,4) circle [radius=0.15cm]; 
   \node at (2, 4.5)  {$r_5$}; 
     \node at (4.4, 4)  {$r_6$}; 
     \node at (0,6.9)  {$r_3$}; 
  \draw[semithick,dashed, double, double distance=1pt]  (-1.9,4)--(2,4);
     \draw[semithick]  (0, 5)--(0, 6.5); 
     \draw[semithick,dashed, double, double distance=1pt]  (-1.85,4.15)--(0,6.5);  
       \draw[semithick,dashed, double, double distance=1pt]  (2.15,4)--(0,6.5);  
        \draw[fill=black]  (0,6.5) circle [radius=0.15cm];
     \node at (0,4.6)  {$r_4$}; 
    \draw[semithick]  (-2,4)--(0,5); 
     \draw[semithick]  (0,5)--(2,4);
     \draw[fill=black]  (0,5) circle [radius=0.15cm]; 
    \draw[semithick]  (2,4)--(4,4);   
   \draw[fill=black]  (2,4) circle [radius=0.15cm]; 
    \draw[fill=black] (4,4) circle [radius=0.15cm];
\end{scope}
       \end{tikzpicture}
     \caption{$(L_{\text{\rm min}}, \Delta_{\text{\rm min}}) $}
     \label{fig:specvanlat}
   \end{center}     
    \end{figure}

\par The main interest in complete vanishing lattices is that their
isometry group is almost equal to its Weyl group.  Here two notions
intervene related to a lattice $L$: the \textbf{\emph{spinor norm}} of
an isometry of $L$ and the \textbf{\emph{discriminant group of $L$}}.

\par To define the former, recall that the Cartan--Dieudonn\'e theorem
states that all isometries of a $\bQ$-vector space $V$ with a
non-degenerate product are products of reflections, say $\sigma_x:V\to
V$, $\sigma_x(v)= v- [2(x.v) /(x.x)] v$. One defines the $\pm$-spinor
norm of such a product of reflections as follows.
  \begin{equation*} 
 \espinnm (\sigma_{x_1}\comp \cdots\comp\sigma_{x_r}  )= \begin{cases}
1 & \text{ if }\, \#\sett { j\in \set{1,\dots,r}}{\varepsilon q(x_j)<0}      \text{ is even }\\
 -1 & \text{ otherwise.}
 \end{cases} 
 \end{equation*} 
The group generated by isometries $\gamma$ with $\espinnm (\gamma)=1$
is denoted $O^{\#\varepsilon}(L)$. Here $\varepsilon=-1$ plays a
central role.  \par The discriminant group makes only sense for
non-degenerate lattices $L$, those for which the map $L\to
L^*=\hom_\bZ(L,\bZ)$ given by $x\mapsto (y \mapsto y\cdot x)$ is
injective. Then the discriminant group by definition is the group
$A(L)=L^*/L$.

We can now formulate the main technical result we are going to invoke:

\begin{theorem}{\protect{\cite[Thm. 5.3.5]{monebe5}}}  \label{thm:weylisbig}
Let $(L,\Delta)$ be a complete vanishing lattice. Then $W(\Delta)$
is the subgroup $O^{ \#-}(L)$ of the orthogonal group $O(L)$ of $ L$
consisting of isometries with (-)-spinor norm $+1$ and inducing the
identity on the discriminant group.
\end{theorem}
  \begin{figure}[h]
\begin{center}
  \begin{tikzpicture}[scale=0.7]
   \draw[fill=black]  (0,0) circle [radius=0.07cm]; 
  \draw[thin] (-1,0)-- (1,0);
  \draw[thin] (3,0)--(4,0);
  \draw[thin,dashed] (-1,0)--(-3,0);
\draw[thin, dashed, double, double distance=1pt] (0,0.06)--(0,1);
  \draw[thin] (-3,0)--(-4,0);
   \draw[thin] (0,0)--(0.5,-1); 
    \draw[thin,dashed] (1,0)--(3,0);
    \draw[thin,dashed] (0.5,-1)--(1.5,-3);
    \draw[thin] (1.5,-3) --(2,-4);
 \draw[thin] (0,1)--(0,2);
\draw[thin] (0,1)--(-1,0);
\draw[thin] (0,1)--(1,0);
\draw[thin] (0,1)--(0.5,-1);
 \draw[fill=black]  (1,0) circle [radius=0.07cm]; 

\draw[fill=black]  (3,0) circle [radius=0.07cm]; 
\draw[fill=black]  (4,0) circle [radius=0.07cm]; 
\draw[fill=black]  (0,1) circle [radius=0.07cm];  
\draw[fill=black]  (0,2) circle [radius=0.07cm]; 
 \draw[fill=black]  (0.5,-1) circle [radius=0.07cm]; 
 \draw[fill=black]  (1.5,-3) circle [radius=0.07cm];
  \draw[fill=black]  (2,-4) circle [radius=0.07cm]; 
  \draw[fill=black]  (-1,0) circle [radius=0.07cm]; 
  
\draw[fill=black]  (-3,0) circle [radius=0.07cm]; 
\draw[fill=black]  (-4,0) circle [radius=0.07cm]; 
 \node at (-4, -0.5)  {$u_{p-1}$}; \node at (-3, -0.5)  {$u_{p-2}$};
  \node at (-1, -0.5)  {$u_1$}; \node at (0, -0. 25)  {$r$} ;
  						 
   \node at (0.3, 1.2)  {$r_2$}; \node at (0.3, 2.2)  {$r_1$};
   \node at (1, -0.5)  {$s_1$};  
   \node at (1, -0.5)  {$s_1$}; \node at (3, -0.5)  {$s_{q-2}$};
   \node at (4, -0.5)  {$s_{q-1}$};  
    \node at (1, -1)  {$t_{1}$};\node at (2, -3)  {$t_{r-2}$};\node at (2.5, -4)  {$t_{r-1}$};
     \end{tikzpicture}
     \caption{Vanishing lattice given by  $T^1_{p,q,r} , p\ge 2, q\ge 3, r\ge 7$ is complete.} 
     \label{fig:T1pqr}
   \end{center}      
 \end{figure}

\begin{example} \label{exm:Arnold}
From \cite[Prop. 5.3.5]{monebe5}, one deduces that a root lattice with
a Dynkin diagram of type $T^1_{p,q,r}$ depicted in Figure~\ref{fig:T1pqr} has a different
root basis making it a complete vanishing lattice. Such Dynkin diagrams
come up as vanishing lattices for the $14$ exceptional unimodal
families of Arnold \cite{arn}. In Table~\ref{table:Sings} we describe
three of those which play a role in \S~\ref{ssec:Monodromy} below.
The vanishing lattice $T^1_{p,q,r} $ is given in
Figure~\ref{fig:T1pqr} by means of a Dynkin diagram with $\mu$
vertices. The "modulus" $a$ is any complex number.  This lattice is
non-degenerate.
 \begin{table}[ht]
\caption{Three exceptional unimodal singularities}\label{table:Sings}
\begin{center}
\begin{tabular}{|c|c|c|c|c|}
\hline
Notation & Normal Form   & Milnor number $\mu$  &Dynkin Diagram\\
\hline
$K_{12}$ & $x^3+y^7+z^2+a xy^5$ & $12$ & $T^1_{2,3,7}$    \\
$K_{13}$ & $x^3+xy^5+z^2+ay^8$& $13$ &  $T^1_{2,3,8}$  \\
$K_{14}$& $x^3+y^8+z^2+axy^6$&  $14$ &  $T^1_{2,3,9}$ \\
 \hline
\end{tabular}
\end{center}
\end{table}

  \end{example}

  \subsection{Applications to Monodromy} \label{ssec:Monodromy}
  
\par The embedding of the Milnor fiber into $X_t$ induces a lattice
morphism
$j_*: \Lambda (X_0,x_0) \to H_2(X_t,\bZ)\simeq H^2(X_t,\bZ)$ \footnote{The
  latter isomorphism is Poincar\'e duality.} which is in general
injective nor surjective.  We consider the global monodromy of the
universal families.  Note that monodromy not only preserves the
hyperplane class but also the singularities of the weighted projective
space. In our case the quasi-smooth members of the universal families
$\cF_{a,b}$ have singularities only at some of the isolated singular
points of $\bP(1,2,a,b)$ and we take the minimal resolution of their
singularities. We shall see (cf. Proposition~\ref{prop:InsAndSIngs})
that in two cases additional exceptional curve configurations are
present which are also preserved by the monodromy.  For a general
member $\widetilde X$ of each of the resulting families of smooth (but
not always minimal) surfaces we show that all of the curves just
mentioned generate the Picard lattice (cf. Proposition~\ref{prop:Main1}),
and so the transcendental lattice of $\widetilde X$ is left invariant.

\par The main result here is as follows:

\begin{proposition}\label{prop:GlobMon}  Let $L=H^2(\widetilde X,\bZ)$ be the
middle cohomology group of the minimal resolution of singularities of
a quasi-smooth member of $\cF_{a,b}$, let $S \subset L$ be the Picard
lattice of a general member and $T=S^\perp$ the transcendental
lattice.  Then the monodromy group of the universal family of such
quasi-smooth hypersurfaces is the subgroup of $O^{\#-}(L)$ preserving
$T$ and inducing the identity on $S$.
\end{proposition}
\begin{proof} The proof is similar to the proof of A.\ Beauville  given in
  \cite[Section 2]{monbeau}. The main ingredients are:
\begin{itemize}
\item The monodromy representation of $\pi(U_{1,2,a,b})$ on $L$ is the
same as the representation induced by a Lefschetz pencil.
\item The discriminant locus is connected implying that all vanishing
cycles are conjugate under monodromy and so these give a vanishing
lattice.
\item The vanishing cycles generate the orthogonal complement of $S$
in $L$ which is precisely $T $.
\item There is a weighted degree $d=a+b+4$ hypersurface in
$\bP(1,2,a,b)$ with an exceptional unimodal isolated singularity
from Arnol'd's list. Its vanishing lattice is non-degenerate and so
embeds in $T$. Hence $T$ is a complete vanishing lattice.
\end{itemize}
Since the first three assertions can be proven as in the ordinary
hypersurface case, it suffices to exhibit suitable singularities in
each of the four cases. We exhibit such a singularity at $(0,0,0)$ in
the affine chart $x_0\not=0$. We refer to Table~\ref{table:Sings} for
the notation of these singularities.

\par For type 1 the form $x_1^7+x_3^2+x_0^5x_2^3+x_0x_1^5x_2$ gives a
$K_{12}$-singularity and so does $x_0x_1^3+x_0^8x_2^7+x_3^2+x_0x_1^5x_3$ for type
4. For type 2  one has a $K_{13}$-singularity given by
$x_0x_1x_2^3+x_0^2x_1^5+x_0^2x_3^2+x_0x_1^4x_2$ and finally,
$x_0x_2^3+x_1^8+x_0^2x_3^2+x_0x_1^6x_2$ gives a $K_{14}$ for type 3.
 \end{proof}

\section{Invariants and  Elliptic Pencils On  the Four types of Surfaces}
\label{sect:Elliptic}

\subsection{Invariants} \label{ssec:Invs} 
 
We start this section by comparing types 3  and 4with two families
from M. Reid's list of surfaces,  
which shows directly that these are birational incarnations of K3 surfaces.

For  type 3 we start multiplying the normal form from
Proposition~\ref{prop:GIT} by $x_0^2$ yielding
\begin{align*}
  F & = x_1(x_0 x_3)^2 + x_0^5 G_1(x_0^5,x_2) x_0 x_3  + r_0 (x_0 x_2)^2
        + G_0 x_1^3 (x_0 x_2)^3 \\
     &\quad \quad  +  x_0 ^2 G_5 (x_0^2,x_1) x_0 x_2   +  x_0 ^2 G_8(x_0^2,x_1).
\end{align*}
Therefore, we can change variables to $y_0 = x_0^2, y_1 = x_1, y_2 = x_0 x_2$
and $y_3 = x_0 x_3$, except for possibly the term 
\[
   x_0^5 G_1(x_0^5,x_2) x_0 x_3
   = x_0^5(A x_0^5 + x_2) x_0 x_3     
   = (A x_0^{10} +  x_0^5 x_2 )   x_0 x_3 ,
\]
which can also be rewritten in the variables $ x_0^2, x_0 x_2$ and $x_0 x_3$. So
after these substitutions, one  obtains  a surface of  type $(18,[2,2,6,8])$,
or, equivalently a surface of   type $(9,[1,1,3,4])$.
 
For  type 4   first  multiply all monomials by $x_0^2$.
In the new variables $y_0 = x_0^2, y_1=x_1, y_2 = x_0x_2, y_3=x_0 x_3$  and as
for type 3,  one  sees that all  type $(22,[1,2,7,11])$ surfaces 
are  birational to  surfaces of type $(12,  [1,1,4,6])$.  
\smallskip

Next, we give a table of the Hodge numbers and the number of
projective moduli resulting from applying Steenbrink's approach
outlined in \S~\ref{ssect:OnSteen} for the four types of surfaces we
just found as well as for the two surfaces in the Reid incarnation
which we denote by $3^*$, respectively $ 4^*$.  We observe that the
last column of the table corroborates the dimensions of the moduli
spaces found in Corollary~\ref{cor:Moduli}.  For details see also
~\ref{app:SAGE}.

\begin{table}[ht]
\caption{Invariants for the classes of elliptic weighted surfaces}
\begin{center}
\begin{tabular}{|c|c|c|c| }
\hline
symbol  &  $h^{2,0}=h^{0,2}$  &  $h^{1,1}_{\rm prim} $  & no. of    \\
&&& projective moduli    \\
\hline
 1.  $(14,[1,2,3,7]) $  & $1$   & $18 $  &  $18$     \\
 2.    $(12,[1,2,3,5])$   &  $1 $ & $17 $  & $17 $ \\
3.  $(16,[1,2,5,7])$   & $1 $ & $ 17$  & $16 $ \\
 3$^\ast$.    $(9,  [1,1,3,4])$ & $1 $ & $ 16$  & $16 $  \\
\,  4.   $(22,[1,2,7,11])$  & $1 $ & $ 18$  & $ 18$ \\
\, 4$^\ast$.   $(12,  [1,1,4,6])$  & $1 $  & $ 18$  &  $ 18$ \\
\hline
\end{tabular}
\end{center}
\label{tab:Main}
\end{table}%

 In what follows we focus on the  incarnations  1--4, i.e.,  we consider, the surfaces $X\subset \bP(1,2 ,a ,b)$ having
singularities at most at $\mathsf P_2=(0:0:1:0)$ and $\mathsf
P_3=(0:0:0:1)$.  The minimal resolution $\widetilde X$ of the
singularities does not necessarily give a minimal surface as we shall
see  for types  3  and 4. We let $X'$ be its minimal
model.\footnote{$X$ being rational nor ruled, (see below) the minimal model
is unique.}

\par We calculate the invariants for the four types 1--4, making use of the Hodge numbers in Table~\ref{tab:Main}.

\begin{proposition}
\label{prop:InsAndSIngs} $X'$ is a simply connected  surface  with invariants
$e(X')=24$, $K^2_{X'}=0$, $b_2(X')=22$,  Hodge numbers
$(h^{2,0}, h^{1,1},h^{0,2})= (1,20,1)$ and signature $(3,19)$.  More precisely,

\begin{itemize}
\item
For type 1  the general surface $X$ has only one cyclic singularity
at $\mathsf P_2 $ of type $\frac{1}{3}(1,1)$ which is resolved by a
rational curve of self-intersection $(-3)$.
\item
For type 2  there is generically only one cyclic singularity at
$\mathsf P_3 $ of type $\frac{1}{5} (1,3)$ which is resolved by   a
chain of two transversally intersecting rational curves of
self-intersections $-2$ and $ -3 $ respectively.
\item
For type 3  the  surface  $X$ has generically two singularities: a
$\frac{1}{5}(1,1)$-singularity at $\mathsf P_2 $ resolved by a single rational
curve with self-intersection $-5$, and a $\frac{1}{7}(1,5)$-singularity
at $\mathsf P_3 $ resolved by a chain of three rational curves with
self-intersections $-2,-2,-3$, respectively. The surface $X'$ is
obtained by blowing down an exceptional configuration in $\widetilde X$
consisting of a chain of two smooth rational curves of self-intersections
$-1,-2$.
\item
For type 4  the  surface  $X$ has generically one singularity at $\mathsf P_2 $ of type
$\frac{1}{7}(1,2)$  resolved  by a  chain  of two  rational curves  with
self-intersections $-2,-4$ respectively. The surface $X'$ is obtained by
blowing down an exceptional curve in $\widetilde X$.
\end{itemize} 
 
\end{proposition}

\begin{proof} First of all observe that $X$ and hence $\widetilde X$ and $X'$
are all simply connected since all quasi-smooth hypersurfaces
(of dimension $>1$) in a weighted projective space are simply connected.
In particular, $X$ cannot be a rational or ruled surface, and $X'$ is uniquely
determined.

Secondly, all surfaces have canonical sheaf $\cO(1)$ with
$1$-dimensional space of sections generated by $x_0$.  Since $p_g$ is
a birational invariant, $p_g(X')=1$.  For the Hodge numbers it
therefore suffices to show that $e(X')=24$ in all cases, since then
$h^{1,1}(X')= 24-4=20$.  We now treat the four cases separately.

\par Type 1.  The normal form given in Proposition~\ref{prop:GIT} (a)
shows that the surface always passes through the singular point
$\mathsf P_2=(0:0:1:0)\in \bP(1,2,3,7)$. We use the techniques as
described in \S~\ref{sec:genwphs}.  The affine piece $x_2\not=0$
containing $\mathsf P_2$, is the $\bZ/3\bZ$-quotient of $\bC^3$ with
coordinates $\set {z_0,z_1,z_3}$ and the surface is the quotient of
the smooth surface $g= 0$ with $g=z_1+ $ higher order terms.  Since at
the origin $\nabla g=(0, 1,0)$, $z_0, z_3$ are local coordinates and
the $\bZ/3\bZ$-action is given by $(z_0, z_3) \mapsto (\rho z_0,
\rho^7 z_3)= (\rho z_0, \rho z_3)$, $\rho$ a primitive root of unity.
This gives a singularity of type $\frac 13 (1,1)$ which is resolved by
a rational curve of self-intersection $(-3)$.
 From Table~\ref{tab:Main} we see that $b_2(X)= 2 +1+h^{1,1}_{\rm prim} = 21$
and hence $e(X)=23$.  
Since the  singularity is resolved by one rational curve, 
$e(\widetilde X)= 23-1+2= 24$, and so $K^2_{\widetilde X}=0$  by Noether's theorem.

\par For type 2 this  is similar, but now the surface always passes through
$\mathsf P_3=(0:0:0:1)$. The affine piece $x_3\not=0$ is the $\bZ/
5\bZ$-quotient of $\bC^3$ with coordinates $\set {z_0, z_1, z_2}$ and
the surface is the quotient of the smooth surface $g= 0$ with $g=z_1+
$ higher order terms as we see from the normal form from
Proposition~\ref{prop:GIT} (b).  Thus the surface has a singularity of
type $\frac 1 5 (1, 3) $.  It is resolved by a chain of two
transversally intersecting rational curves of self-intersections $-2$
and $ -3 $ respectively.  Table~\ref{tab:Main} now gives
$e(X)=22$. The singularity is resolved by a chain of two rational
curves and so $e(\widetilde X)= 22-1+ 3= 24$ and then 
$K^2_{\widetilde X}=0$ by Noether's theorem.

\par Let us now investigate type 3. Here we have two singularities
at $\mathsf P_2$ and at $\mathsf P_3$.  As in the previous cases we
find that the former is of type   $\frac 15 (1,1)$, resolved by a
single $(-5)$-curve, while the latter is a $\frac 17
(1,5)$-singularity resolved by a chain of three rational curves of
self-intersections $-2,-2,-3$ respectively.  From Table~\ref{tab:Main}
we find that $e(X)= 22$ and so $e(\widetilde
X)= 22-2+2+4=26$ implying that $\widetilde X$ becomes minimal after
twice successively blowing down. The resulting surface $X'$ then has
$e=24$ and $K^2_{ X'}=0$ as it should.

\par Finally, let us pass to type 4. Here there is one singularity at
$\mathsf P_3$ of type $\frac 17 (1,2)$ resolved by a chain of two
rational curves with self-intersections $-4$ and $-2$.  Using
Table~\ref{tab:Main} we find $e(\widetilde X)=e(X)-1+3=23 -1+3=25$ and so
$\widetilde X$ contains one exceptional curve.  Blowing down gives
$X'$ with $e(X')=24$ and $K^2_{ X'}=0$.
\end{proof}

\begin{remark}
We could also calculate $K_{\widetilde X}^2$ using Reid's calculus of
discrepancies, i.e., using an expression of the form $
K_{\widetilde X}= \sigma^*(\omega_X)+ \Delta$,
where $\sigma:\widetilde X \to X$ is the minimal resolution of $X$ and
$\Delta$ is a $\bQ$-divisor with support on the exceptional divisors.
For instance in case (c) denote the exceptional chain at $\mathsf P_2$
by $E$ and at $\mathsf P_3$ by $F_1,F_2,F_3$.  Then the discrepancy
divisor is $\Delta= -\frac 35 E- \frac 17(F_1+2F_2+3F_3)$ and
$\Delta^2= \frac{78}{35}$. Then   
\[
K_{\widetilde X}^2=(\cO(1) +\Delta)^2= \frac{8}{35}-  \frac{78}{35} =-2.
\]
\end{remark}

\subsection{Generalities on elliptic surfaces}  \label{ssec:GenEllFib} An elliptic surface is a surface
$X$ admitting a holomorphic map $f:X\to C$,  where $C$ is a smooth curve and the
general fiber of $f$ is a smooth genus $1$ curve. Such a fibration $f$ is called
an \textbf{\emph{genus $1$ fibration}}.\footnote{Note that a scheme-theoretic
fiber $f^{-1}s$, $s\in S$ of $f$ may be multiple, say $f^{-1}s= m F_0$, $F_0$
reduced.}  We assume that $X$ does not contain $(-1)$-curves as a component of
a fiber of $f$.

\par The possible singular fibers of an elliptic fibration have been
enumerated by Kodaira. See e.g. \cite[Ch V, \S 7]{4authors}.  These
are the non-multiple fibers of types $I_b$, $b\ge 1$, $I\!I, I\!I\!I,
I\!V$, $I^*_b, b\ge0$, $ I\!I^*, I\!I\!I^*, I\!V^*$, where the
irreducible fibers are $I_1$ with one ordinary node and $ I\!I$ with
one cusp.    A type \emph{III}  fiber consists of two $-2$ curves touching each other in one point, 
explaining the Euler number $2\times 1 +1=3$.  
The multiple fibers are multiples of a smooth fiber or of a
singular fiber of type $ I_b$, $b\ge 1$.  For the purpose of this
article, in Table~\ref{tab:Kod} we also give the type of lattice that
occurs after omitting one irreducible component of multiplicity $1$.

 \begin{table}[ht]
\caption{Non-multiple  singular fibers of an elliptic fibration} \label{tab:Kod}
\begin{center}
\begin{tabular}{|c|c| c|}
\hline
Kodaira's notation & lattice component & Euler number   \\
\hline
$I_b, b\ge 2$ &   $A_{b-1}(-1) $ & $b$ \\
$I_1$           &   &  $1$ \\
$I\!I$            &    & $2$ \\
$I\!I\!I$      & $A_1(-1)$ &$3$ \\
$I\!V$      & $A_{2}(-1) $ &$4$ \\
$I^*_b$    & $D_{4+b}(-1)$ &$b+6$ \\
$I\!I^*$      & $E_8(-1)$  &$10$ \\
$I\!I\!I^*$ & $E_7(-1)$  & $9$ \\
$I\!V^*$   & $E_6(-1)$  & $8$ \\
\hline
\end{tabular}
\end{center} 
\end{table}

The Kodaira dimension $\kappa(X)$ of an elliptic surface can be equal
to $-\infty$, $0$ or $1$. This can be determined from the plurigenera
$P_m(X)=\dim H^0(X,K_X^{\otimes m})$, $m\ge 1$.  For instance,
$\kappa(X)=1$ if at least one plurigenus is $\ge 2$. More generally,
one applies the canonical bundle formula:

\begin{proposition}[\protect{\cite[Ch. V, \S12]{4authors}}]  \label{thm:CanBunForm} Let $f:X\to C$ 
be a genus $1$  fibration and let  $g=\text{genus}(C)$. Assume that $\sett{m_iF_i}{i\in I}$ is the set of multiple fibers
($I$ is finite but possibly  empty).
There is a divisor  $D$ on $C$ of degree $d:=\chi(\cO_X)+ 2g-2$ such that the canonical divisor  $K_X$ of $X$ is given by
\[
K_X= f^*D+ \sum _{i\in I}  (m_i-1)F_i.
\]
With  $\delta:= d+ \sum_{i\in I}   (1- m_i^{-1})$, one has
\[
    \begin{cases}\delta <0 &\iff   \kappa(X)=-\infty\\
\delta=0 &\iff   \kappa(X)=0\\
\delta >0 &\iff   \kappa(X)=1.
\end{cases} 
\]
\end{proposition}

\begin{corollary} \label{cor:OnElFibs}
A genus $1$ fibration $X\to \bP^1$ (on a minimal surface $X$) with
$p_g=1,q=0$ and at least one multiple fiber has Kodaira dimension $1$.
\end{corollary}

\subsection{The elliptic fibrations on the four classes of surfaces}  
First a preliminary observation. From the invariants of $X'$ given in
Proposition~\ref{prop:InsAndSIngs} coupled with the classification of
algebraic surfaces~\cite{4authors}, we infer that $X'$ either is a
(minimal) K3 surface or a (minimal) properly elliptic surface. In both
cases the canonical divisor $K_{X'}$ has self-intersection $0$.  We
show below that $X$ has a pencil of genus $1$ curves. On $X'$ the
resulting pencil $|F'|$ then necessarily is fixed point free since
$F'\cdot F'=0$ (by the genus formula) and hence gives a holomorphic
map $ X'\to \bP^1$.

\begin{proposition}\label{prop:Main1}
 The rational map on $X$ given by $(x_0:x_1:x_2:x_3)\mapsto (x_0^2:x_1)\in \bP_1$
induces an elliptic fibration $ \pi:X'\to \bP^1$. For type 1 and 2    the surface
$X'=\widetilde X$ has Kodaira dimension $1$ and  for type 3  and  4   the minimal
surface $X'$ is a K3 surface, a surface of Kodaira dimension $0$.
\par
Furthermore,  for types 1 and 2, resolving the  orbifold singularity  yields  a   rational curve of selfintersection $-3$ which is a
bisection for the elliptic fibration. 
   For types 3 and 4,   on the minimal model $X'$ there is a rational curve with   selfintersection $-2$ originating from
the orbifold singularities which is a section for the elliptic fibration.  
\par

The generic fiber type (i.e. number and type of singular fibers)  has been summarized in the table below.   

\begin{center}
\begin{tabular}{|c|c| c|c|}
\hline
 case        &  $\pi^{-1}(0:1)$&  $  \pi^{-1}(1:0)$ & remaining singular fibers   \\
& or  $\lambda=\infty$ & or   $\lambda=0$ &\\
\hline
$  (14,[1,2,3,7]) $      &   $2I_0$ & $I_1$ & $23 \times I_1$\\
$ (12,[1,2,3,5])$    &      $2I_0$      &  $I_2$ & $22 \times I_1$\\
$ (16,[1,2,5,7])$     &   $I\!I$  & $I_3$ & $19 \times I_1$\\
$  (22,[1,2,7,11])$   &  $I_1$ &$I_0$& $23 \times I_1$ \\
\hline
\end{tabular}
\end{center} 
 \end{proposition}
\begin{proof}  
\textbf{Step 1.} \emph{The general fiber of $\pi$ in all cases is a smooth
genus $1$ curve.}
\\
  The fiber of $\pi$ can be viewed as a curve $C$ of
degree $d$ in $\bP(1,a,b)\subset \bP(1,2,a,b)$ passing through the
singular points of $\bP(1,a,b)$ which are the same as those of
$X$. Its equation is obtained by eliminating $x_1$ from the equation
of the surface.  Note that its amplitude is $a+b+4-a-b-1=3$ so that
$\omega_C=\cO_C(3)$ which has two sections, $x_0^3,x_2$, for type 1
and 2, and one section, $x_0^3$, for type  3 and 4.
 
  For types 1 and 2 the curve $C$ has one  ordinary double point at the unique
singularity of $X$.  Resolving the singularity of $X$ separates the
branches on $\widetilde X$ so that the resulting curve is smooth and
has genus $1$.   For types 3 and 4  the curve $C$ is already a smooth
genus $1$ curve.

\textbf{Step 2.}  \emph{Determining  the Kodaira dimensions}.
\\
We already saw in \S~\ref{ssec:Invs} that quasi-smooth   surfaces of type 3 and 4  are K3 surfaces.
We next show that  for types 1 and 2   the surface has Kodaira dimension $1$.   Since the
pencil is given by $(x_0^2 : x_1)$ there is a double curve over $(0:
1)$. The weighted  plane $x_0=0$ ($=\bP(2,a,b)$) cuts the surface in this curve in which it has amplitude $2$.
It is  a quasi-smooth curve on the  basic surface (and hence on the general
quasi-smooth surface) and passes through the unique orbifold point of $X$.
So on $\widetilde X$ this curve
is a smooth elliptic curve, the reduction of a double fiber of type
$2I_0$.   The surface has positive Kodaira dimension in both cases since
$P_2(\widetilde X)=2$ (note that $2K_{\widetilde X}$ is a fiber of
the pencil which moves in a linear system of projective dimension $1$).

\textbf{Step 3.}  \emph{Determining the fiber types.}
\\
 To verify the fiber types of the following special surfaces we used  \textsc{SageMath}.
Firstly  to show their quasi-smoothness and secondly  to calculate certain
discriminants;  quasi-smoothness  also has been verified manually.
\footnote{See ~\ref{app:SAGE} for the SAGE code we used for checking quasi-smoothness.}
 
 \textbf{\emph{Type 1}}.
It  suffices to establish this for one example, for which we take the  quasi-smooth 
surface
$x_0^{14}+ x_1^7 + x_2^4x_1+ x_3^2+ x_0^{11} x_2 + x_0^5 x_2^3 =0$.  
Away from the the plane $x_0=0$  we may assume $x_0=1$, $x_1=\lambda$ which gives the
inhomogeneous equation
 \begin{equation*}  
   \lambda z_2^4 + z_2^3 +z_2+ (1+\lambda^7)+ z_3^2  =0.
 \end{equation*}
This equation describes a varying double cover of $\bP^1$ branched in $4$
points. Its singular members are found from the discriminant of the left hand 
with respect to $z_2$ which is the degree $24$ polynomial 
$-256 \lambda^{24} + 768\lambda^{17} - 192\lambda^{16} - 27\lambda^{14} - 768\lambda^{10} + 384\lambda^9 + 6\lambda^8 + 54\lambda^7 + 256\lambda^3 - 219\lambda^2 - 6\lambda - 31$ with non-zero discriminant
and so there are $24$ singular fibers, necessarily of type $I_1$, as claimed.
  
 \textbf{\emph{Type 2}}.  The resolution of the unique orbifold point produces a
$-3$ curve (which is a bisection) and a 
$-2$ curve which must be part of the  reducible fiber over  $(1:0)$.
 To find the generic fiber type,   consider the special  quasi-smooth  surface
 \[
 x_3^2x_1 +2 x_3x_0 (x_2^2 +x_0^6) + x_2^4+ x_1^3 x_2^2+x_0x_1^4x_2+x_1^6=0.
 \]
  As before, set $x_0=1$, $x_1=\lambda$, $x_j=z_j$, $j=2,3$.
 The elliptic fibration is given by
  $-w^2 = (\lambda-1) z_2^4+ (\lambda^4-2) z_2^2+\lambda^5z_2+(\lambda^6-1) $, where $w= \lambda z_3+  2(z_2^2+1)$.
  One can check   that   in this chart the surface is smooth. The verification on the remaining points of the surface is easy using that 
  for these   $x_0=0$.
  
 The discriminant of the left hand side  of the above equation  with respect to $z_2$ is the degree $24$ polynomial 
\begin{align*}
 \lambda^2(144\lambda^{22}-  388\lambda^{21} &+ 329\lambda^{20}   - 58\lambda^{19} + 357\lambda^{18}
-1160 \lambda^{17} + 1064\lambda^{16}-  816 \lambda^{15}  \\
  & + 1536\lambda^{14}- 1168 \lambda^{13}+ 160\lambda^{12}  - 896 \lambda^{11} + 1696 \lambda^{10}- 1056 \lambda^{9}   \\
  & + 896\lambda^8- 1408\lambda^7 +768 \lambda^6 + 512  \lambda^4  - 512  \lambda^3  - 256\lambda  +256),
\end{align*}   
whose second factor is without multiple factors since the discriminant is a (huge) non-zero integer,
which shows  the claim. 
  \begin{figure}[htbp]
\begin{center}

\caption{Creating a cuspidal fiber}
\label{fig:Cusp}

 \begin{tikzpicture}[scale= 0.7]
 
\begin{scope}[ shift={( -4,0)}]
\draw (0,0)--(-1.5,1.5);  \node at (-1,1) {\small $-5$} ; \node at (0,-1) {\small $-1$} ;\node at ( 1,1) {\small $-2$} ;
 \draw[thick,green,rotate=135] (0,0)--(-1.4,1.4);  
 \draw[thick,red,rotate=270] (0,0)--(-1.5,1.5); 
 \end{scope}
 
 \draw[->] (-1.5,1)--(0.5,1);
  \draw[->] (2, 0.2)--(0.4, -2.5);
  
  \draw[thick,red]   (4,0) arc [start angle=  0, end angle=  180,  x radius=16mm, y radius=8mm];
  \node at (3.8,1.3) {\small $-4$} ;
   \node at (3.8,0.3) {\small $-1$} ;
   \draw[rotate=180]  (-0.8,-1.6) arc [start angle=  0, end angle=  180,  x radius=16mm, y radius=8mm];

\begin{scope}[ shift={( 0.5,-4)}]

 \draw (-0.5,1.3) arc [start angle= -90, end angle=  -180,  x radius=8mm, y radius=16mm];
 \draw[rotate=180]   (0.5,-1.3) arc [start angle=  -90, end angle=  0,  x radius=8mm, y radius=16mm] ;
 \end{scope}
\end{tikzpicture}
\end{center}
\end{figure}

  \textbf{\emph{Type 3.}}    
Consider the special example of a quasi-smooth surface  given by  
\[
x_1 A+x_0B =0, \quad A=  x_1^7-x_3^2 ,\, B= x_0^8x_3+x_0^3x_2x_3+
 (x_0^{15} +x_0^{10} x_2+ x_0^5x_2^2+ x_2^3 )  \]
and  the genus $1$ fibration given by the rational map $\pi$.  The
line $\mathsf P_{23}$ given by $x_0=x_1=0$ lies on the surface and so
is the indeterminacy locus of this map.  It contains the two singular
points $\mathsf P_2$ and $\mathsf P_3$. The plane $x_0=0$ contains
this line as well as the curve $C_A$ given by $ A=x_0=0$; the plane
$x_1=0$ also contains the line as well as the curve $C_B$ given by
$B=x_1=0$. This curve 
  is  rational and has  a singularity at $(0:0:1)\in \bP(1,5,7)$,
corresponding to the singularity $ \mathsf P_3 $ on the surface $X$. 
On $  X'$ this gives rise to  an  $I_3$-type fiber at $\lambda=0$. 
In fact, on $X'$ the proper transform of the resolution of $ \mathsf P_3 $ consists of three
$-2$-curves, two of which together with the proper transform of $C_B$ yields the $I_3$-configuration
and the remaining $(-2)$-curve (which comes from a $( -3)$-curve on $\widetilde X$) is a section.

Similarly, one shows that $C_A\subset \bP(2,5,7)$ is a smooth rational
curve passing through $(0:1:0)$ corresponding to
$\mathsf P _2 \in X$. On $\widetilde X$ this becomes a $(-2)$-curve $F$ meeting
the total transform $E$ of the line $L_{01}$ transversally.  One has
$E\cdot E=-1$ and $K_{\widetilde X}= 2E+F$ so that
$c_1^2(\widetilde X)= -4+4-2=-2$ in agreement with $c_2(\widetilde X)=26$.
On $X'$ this gives a type $I\!I$-fiber at $\lambda=0$ as explained in
Figure~\ref{fig:Cusp}. Note that the original fibration $(x_0:x_1:x_2:
x_3)\mapsto (x_0^2:x_1)$ has a double fiber $x_0^2=0$ but it becomes
absorbed as a multiplicity $2$ component in a fiber which together
with another $(-1)$-curve contracts to the cusp at $\lambda=\infty$.

In the chart $x_0=1$, the elliptic fibration is given by $x_1=\lambda$, which gives the family
\[
\lambda^8-\lambda x_3^2+ x_3 +x_2x_3 +(1+x_2+x_2^2+x_2^3)=0.
\]
Multiplying by $\lambda^3$ and making a change of variables $x_2\lambda =x,
x_3\lambda^2=y$, this yields
 \[
y^2-xy-\lambda y=  x^3+\lambda x^2=\lambda^2 x+ \lambda^3+\lambda^{11} .
\]
The discriminant of this elliptic curve equals 
\[
 t^3(-27 t^{19}-76 t^{11}+30 t^{10}-3 t^9+ \frac 1 {16} t^8 -44 t^3-28t^2-5t+\frac 18),
\]
where the degree 19 polynomial in $t$ can be shown to have non-zero discriminant.
 This shows that away from $\lambda=0$ and $\lambda=\infty$ there are $19$ irreducible type $I_1$-fibers as claimed.
 
 \textbf{\emph{Type 4.}}  
The plane $x_0=0$ intersects the general surface   in a rational curve 
which on the desingularization $\widetilde X$   becomes a $-1$-curve (with multiplicity $2$) intersecting the $-4$-curve in two points
and thus on $X'$ this becomes an $I_1$-type fiber.
On  a general quasi-smooth type 4  surface the elliptic fibration has $23$ further
$I_1$-type fibers as one sees for instance by computing the discriminant with respect to $z_2$ of the 
left hand side of the expression
$z_2^3+\lambda^4z_2^2+(\lambda^7+\lambda^3+1)z_2+\lambda^{11}= z_3^2$,
which represents the elliptic fibration for the quasi-smooth surface 
$x_0x_2^3+x_1^4 x_2^2+(x_0 x_1^7+x_0^9 x_1^3+x_0^{15})x_2+x_1^{11}= x_3^2$.
This discriminant is the  degree $23$ polynomial 
\begin{eqnarray*}
  4\lambda^{23} - 8\lambda^{22} - 4\lambda^{21} + 20\lambda^{18} - 12\lambda^{17} + 20\lambda^{15}  - 11\lambda^{14} - 12\lambda^{13}  \\
\quad \quad  \quad + 2\lambda^{11} - 24\lambda^{10}- 4\lambda^9 + a^8 - 12\lambda^7 - 12\lambda^6 - 12\lambda^3 - 4 
\end{eqnarray*} 
and it has no  double roots which shows that there are indeed $23$ type $I_1$ fibers away from $(0:1)$.
Moreover, substituting $x_1=0$ shows that the fiber at $(1:0)$ is smooth elliptic,
confirming that $\lambda=0$ is not a root of the discriminant. 
 \end{proof}

\begin{remark}\label{rmk:bToa} If we check what happens under the birational transformation given in Remark~\ref{rmk:OnNormForms}.1 which transform a type 2  surface
in a type  1  surface,  only the fibers over $t=0$ are affected: for  the    type 2 surface we have a type  $I\!I$-fiber and the bisection meets each component in a single point,
while for the  type 1 surface the component of the type  $I\!I$-fiber coming from the quotient singularity contracts, giving a fiber  of type $I_1$ whose singularity lies on the bisection.
\end{remark}
 
 The fiber structure of an elliptic pencil allows us to calculate the so-called \textbf{\emph{trivial Picard lattice}}, that is the lattice spanned by the fibers and one (multi)section. 
 There might be more (multi)sections, enlarging the Picard lattice. This is however not the case, as we show now. Note that the argument  is different
   for all types 1--4.
   
 \begin{corollary} \label{cor:OnPic}  The  Picard lattice  $\Pic (X')$ of the minimal model $X'$ of the generic member of the four families 
 coincides with the trivial lattice. In fact, one has:
  \begin{description}
\item[for type 1:] $\Pic (X')\simeq \la 1\ra\operp\la -1\ra  $;
\item[for type 2:] $\Pic (X')\simeq  \la 1\ra\operp\la -1\ra \operp\la -2\ra $;
\item[for type 3:] $\Pic (X')\simeq U \operp A_2(-1)$;
\item[for type 4:] $\Pic (X')\simeq U  $.
\end{description}
 \end{corollary}
 \begin{proof} For  \textbf{\emph{type 1}}  the Picard lattice  
contains a half fiber  $F_0$ (the canonical curve) and the bisection  $E $ with $E.E=-3$ coming from blowing up the singularity.  Denote their classes by $f, s $. Then $f. s= 1$, $f.f=0$, $s.s= -3$.
Passing to the classes $s+2f, s+f$, one finds that the $\bZ$-span of the classes gives a lattice isometric to $S:=\la 1\ra\operp\la -1\ra  $.
This lattice is primitive.
\\
For \textbf{\emph{type 2}} the Picard lattice contains $F_0$, the bisection given by the exceptional curve $E $ with $E.E=-3$,   
and a reducible fiber $G+G'$ of type $I_2$.   Denote their classes by $f, s,  g, g' $.
Passing to the classes $s+2f, s+f,  -f+g   $,  the $\bZ$-span of the classes gives a lattice isometric to $\la 1\ra\operp\la -1\ra \operp\la -2\ra $.
   \\
\textbf{\emph{Type 3}} concerns an elliptic  K3 surface with a  section and generically one reducible fiber of type  $I_3$.
 Hence the trivial Picard lattice is isometric to $U \operp A_2(-1)$.  
 \\
  \textbf{\emph{Type 4}}  concerns an elliptic  K3 surface  with a section 
 and generically no reducible fibers so that the trivial Picard lattice is isometric to $U$.

 To show that generically the Picard lattice equals the trivial lattice,  in \textbf{\emph{type  1  and 2)}}, we bound  the Picard number. 
 We claim that  for type 1   the   Picard group does not have   rank $\ge 3$.
For this, it suffices to find a family admitting a non-symplectic automorphism $g$ of order $11$ 
having at least $1$ modulus and for which  the period map is non-constant. Indeed,  
by\footnote{The proof only uses the K3-type intersection lattice of the surface.}  \cite[Cor. 1.14 in Ch 15]{K3lects}, the transcendental lattice 
of a surface in that family has rank divisible by $10=\phi(11)$ and so is either $10$ or $20$.  If it were 
$10$ for all surfaces in the family, the Picard number would be constant  and so also
the period map would be constant.  
\par
The required family is given by $F_{a}=x_1x_2^4+ a x_1^7+x_0^{11}x_2 + x_1^4x_2^2 -x_3^2$, $a \in \bC$. 
Its members are generally quasi-smooth, the tangent
to the kernel of the period map at each quasi-smooth member is in the (fixed) direction   
$x_0^9x_1x_2$ which can be shown to be  independent  from 
  the deformation direction $x_1^7$ of the family. Finally,   $F_{a }$ admits the automorphism
$g(x_0,x_1,x_2,x_3)=(\rho_{11} x_0,x_1,x_2,x_3,x_4)$ which sends the form $\omega$ which is the residue  along $F_{a }=0$ of the
form $ \Omega_3/ F_{a }$ to $\rho_{11}\omega$ and so the action is indeed non-symplectic. 

The proof that for type 2  the Picard group has rank $3$ has been relegated to ~\ref{app:nijgh}. The proof has  an arithmetic flavor
and uses reduction mod $2$ and $3$.\footnote{After ~\ref{app:nijgh} was ready we realized the existence of 
the birational transformation of Remark~\ref{rmk:OnNormForms} which shows that this gives another proof (cf.\ Remark~\ref{rmk:bToa}).}

 \textbf{\emph{Type 3 and 4}} concern  families of K3 surfaces for which the number of moduli equals the dimension of the period  domain.
Indeed, for type 3 one has $16$ moduli, for type 4   there are $18$ moduli (see Table~\ref{tab:Main}).
We just calculated the trivial Picard lattice which has rank $4$, respectively $2$ and so the dimension of the period domain associated to
the transcendental lattice is $\le 22-4-2=16$, respectively $\le 22-2-2=18$. We shall prove that the period map for a modular family in
both cases  generally  is an immersion (see Proposition~\ref{prop:GenPerMap}) and so equality holds which implies that the Picard lattices are as stated. 
\end{proof}

\begin{remark}\label{rmk:OnLogTfs} 
\textbf{1.} The elliptic fibrations on $X'$ of types 1 and 2
 having a single smooth double fiber $2F_0$ admit an inverse
logarithmic transformation (cf. \cite[\S V.13]{4authors}) which
leaves the fibration outside the double fiber intact but replaces
$2F_0$ by a smooth fiber which is no longer a double fiber. The
resulting surface thus is a K3-surface $X''$. Note that this
procedure changes the Kodaira-dimension!
Since one can perform a logarithmic transformation on any smooth
fiber of the resulting fibration on $X''$, one can in this way
construct elliptic surfaces, say $Y_t$, $t\in \bP \setminus
\set{(0:1)}$, that are not obtained from surfaces  that like $X$ are weighted hypersurfaces of
degree $14$ in $\bP(1,2,3,7)$.  The
Picard lattice and the transcendental lattices being the same as for
$X$, the surfaces $Y_t$ are projective and their period point belongs
to the same period domain.
\\
\textbf{2.} The statements 
 for types 3 and 4   can be used to confirm the specific calculations in S.M.
Belcastro's thesis~\cite{belcas}  for the  numbers 22 and 23  in Reid's
list since these are birational to our type 3 and 4.   We find 
  $\NS{}=  \operp^2 D_4(-1)\operp U(2 )$, respectively $\NS{}= \operp D_5(-1) \operp D_4(-1)\operp U(2)$.
For details on the calculation for  the first of these, see Example 4.5.7 in the forthcoming book~\cite{quadforms}.
 
\end{remark}

 \section{Hodge theoretic aspects: the pure variation} 
\label{sec:Hn}

 \subsection{The period map}

\par The existence of a
double fiber in the elliptic fibration causes  Torelli to fail  everywhere.
This was observed already by K.~Chakiris~ \cite[Theorem 2]{ChakCounter}.
We give a simple proof which shows that in these cases infinitesimal Torelli
always fails.  We give it here  because the geometric proof in
\cite{ChakCounter} is only sketched.

\begin{proposition}\label{prop:ChaksThm} Let $X$ be an elliptic surface fibered
over $\bP^1$ with a unique multiple fiber $m F_0$, $m\ge 2$, and such that
$K_X\simeq (m-1)F_0$.   The period map for a Kuranishi 
family   of such elliptic surfaces 
has everywhere $1$-dimensional fibers. This holds in particular for the
classes  a) and  b)  from Table~\ref{tab:Main}.\footnote{The Kuranishi family is not the same as the
modular family from  Definition~\ref{dfn:ModFam}; the latter has a fixed polarization.}
\end{proposition}

\begin{proof} First note that $p_g(X)=1$ and so $(m-1)F_0$ is the unique
canonical divisor.  Also $q(X)=0$, e.g. because of Theorem~\ref{thm:CanBunForm}.
We reason as in F. Catanese \cite[p. 150]{ttag}.  The failure of infinitesimal
Torelli is caused by the non-trivial kernel of the tangent map to the period
map. The latter is the map
\begin{align}
\label{eqn:mu}
 \mu:  H^1(T_{X}) \to \hom\left(H^0(K_{X}) \to H^1(\Omega^1_{X}) \right).  
\end{align} 
Since $T_{X}\simeq \Omega^1_{X} \otimes K_{X} ^{-1}$, the morphism
$\mu$ is induced by multiplying $H^1(T_{X}) $ by a non-zero section
$\omega$ of $K_{X}$ vanishing along the canonical divisor
$K=(m-1)F_0$.  So, from the exact sequence
\[
   0= H^0(\Omega^1_{X})  \to
   H^0(\Omega^1_{X}\otimes \cO_K)\to H^1(\Omega^1_{X}(-K_{X}))
   \mapright{\cdot\omega}   H^1(\Omega^1_{X}),
\]
one sees that the kernel of $\mu $ is isomorphic to $H^0(\Omega^1_{X}
\otimes \cO_K)$. The problem now is that $K$ is not reduced as soon as
$m\ge 3$.  
\par
If   $m=2$, and  the multiple fiber is of type $2I_0$, the normal bundle
sequence for $K\subset X$ reads
\[
  0\to \cO_{K} (-{K})  \mapright{\cdot\omega}  \Omega^1_{X}\otimes \cO_K  \to \Omega^1 _{K} \to 0 
\]
and since $\cO_K(-K)$ is a torsion line bundle on the elliptic curve
$K$, the exact cohomology sequence shows that $H^0(
\Omega^1_{X}\otimes \cO_K) \simeq H^0(\Omega^1 _{K } ) \simeq
H^0(\cO_C)$, which is $1$-dimensional.   If we have a multiple fiber of
type $2I_b$,  $b\ge 1$,  the argument is essentially the same. 
\par
 If $m\ge 3$ the argument is more involved. We sketch it only for $m=3$ so that
$K_X=2F_0$. One now uses the so-called decomposition sequence for
reducible divisors $D=A+B$ which reads (cf.\ \cite[Ch. II.1]{4authors})
\[
  0\to \cO_A(-B)\to \cO_C\mapright{\rm restr} \cO_B \to 0.
\]
We apply it to $K={F_0}+{F_0}$ and tensor it with
$\Omega^1_X|{F_0}$. This introduces the two locally free sheaves
$\Omega^1_X|{F_0}$ and $\Omega^1_X(-{F_0})|{F_0}$ on the elliptic
curve ${F_0}$. The first sheaf fits into the normal bundle sequence
for ${F_0}$ in $X$,
\[
    0\to \cO_{F_0}(-{F_0})\to \Omega^1_X|{F_0}\to \Omega^1_{F_0}\to 0
\]
and since $ \cO_{F_0}(-{F_0})$ is torsion, the same argument as
before gives $H^0( \Omega^1_X|{F_0})\simeq H^0( \Omega^1_{F_0})\simeq \bC$.
The sheaf $\Omega^1_X(-{F_0})|{F_0}$ fits into the normal
bundle sequence twisted by $\cO_{F_0}(-{F_0})$ which reads
\[
 0\to \cO_{F_0}(-2{F_0})  \to \Omega^1_X(-{F_0})|{F_0}\to \Omega^1_{F_0}(-{F_0})\to 0.
\]
Hence  $H^0(\Omega^1_X(-{F_0})|{F_0})\simeq H^0( \Omega^1_{F_0}(-{F_0}))=0$ and  
$H^1(  \Omega^1_X(-{F_0})|{F_0})=0$.
Plugging all this  into the exact sequence of the twisted decomposition
sequence
 \[
 0\to \Omega^1_X(-{F_0})|{F_0} \to \Omega^1_X\otimes \cO_K \to \Omega^1_X|{F_0}\to 0
 \]
shows that $H^0(\Omega^1_X\otimes \cO_K)\simeq H^0(  \Omega^1_X|{F_0}) \simeq \bC$.
By induction one can  show the result for all $m$.
\end{proof}
 
As stated in the introduction, in our case  the  four types of modular  
families give rise to a polarized variation of Hodge structure, each with an associated period domain,
say $D_{a,b}$, and the associated Kuranishi family (with fixed
polarization) gives rise to a (local) period map  $ M_{a,b}  \to
D_{a,b}$.    In our case the
kernel at $F\in \cM_{a,b}$ of the period map   is   precisely the kernel of the
multiplication map $R^d_F \mapright{\cdot x_0} R^{d+1}_F$. This kernel
varies with $F$.  For instance  the calculations in ~\ref{ssec:Manual}  show
that at the point corresponding to the \textbf{\emph{basic type}} this
kernel has dimension $1$ in all cases.  However, at   a general point this  kernel has dimension 0, as shown in ~\ref{ssec:KernelCode},
proving  the following result.

\begin{proposition}\label{prop:GenPerMap} 
The period map for  the
Kuranishi family for a general    type  1--4 surface\footnote{ Recall  our convention on Kuranishi families from Lemma~\ref{lem:kuranishi}}.    is an immersion.
\end{proposition}

 An  interesting arithmetic consequence of this result\footnote{For background on the Tate and the Mumford--Tate conjectures we refer to \cite[\S 1]{moonen}}.  is  the validity of the Tate 
 conjecture as well as the Mumford--Tate conjecture for each  of the present surfaces. Note that the types 1 and 2, being not of general type, are not mentioned in loc.\ cit.
 These two classes of surfaces give the first properly elliptic surfaces (in characteristic $0$)
 for which these conjectures now are known to be true.
  \begin{corollary}  \label{cor:moonen} For each of the  surfaces of types 1--4  the Tate conjecture for divisor classes and the Mumford--Tate conjecture 
 for rank $2$ cohomology holds.
 \end{corollary}
  \begin{proof} This follows from \cite[Prop. 9.2]{moonen}. Indeed, since the moduli stacks of the surfaces are irreducible of positive dimension and 
 condition (P) in loc.cit. follows from the immersive property of the period map, all conditions are satisfied. 
 \end{proof}
       
\begin{remark}L. Tu~\cite{Tu}  has shown that infinitesimal Torelli for weighted
hypersurfaces hinges on the validity of Macaulay's theorem which is
only true if the degree of the hypersurface is high enough.  
Tu's result  \cite[Theorem 2]{Tu} indeed does not apply in the present situation, 
 not  even in the case where the minimal model of  the resolution is a K3 surface obtained
from the basic surfaces of types  3  and 4. 
 \end{remark}

\subsection{The generic transcendental  lattice for the four families} \label{ssec:OnLats}

\subsection*{Digression on lattices} The discriminant form  of a non-degenerate
integral lattice $L$ plays a central role if $L$ is not unimodular.
We already introduced the discriminant group $A(L)=L^*/L$ just above
Theorem~\ref{thm:weylisbig}. Extending the form on $L$ in a
$\bQ$-bilinear fashion to $L\otimes\bQ$, one obtains a well-defined
$\bQ/\bZ$-valued form $b_L$ on the discriminant group by setting
 \[ 
b_L  :   A(L)\,  \times  \, A(L)  \to \bQ/\bZ,   \quad  \bar x . \bar y  \mapsto  x. y  \bmod \bZ \,\text{(\textbf{\emph{discriminant bilinear form}})} .
 \] 
A lattice $L$ such that $x. x$ is even is called an
\textbf{\emph{even lattice}}. These come with an integral quadratic
form $q$ given by $q(x)= \half x . x$ and for these one considers a
finer invariant, the  \textbf{\emph{discriminant quadratic form}}
\[
q_L : A(L) \to  \bQ/\bZ,   \quad \bar x \mapsto   {q(x)} \bmod \bZ.
\]
The discriminant form is a so-called torsion form and such forms are
completely local in the sense that these decompose into $p$-primary
forms where $p$ is a prime dividing the discriminant.  More
precisely, $b_L$ is the orthogonal direct sum sum of the discriminant
forms of the localizations $L_p=L\otimes\bQ_p$ and so it ties in with
the \textbf{\emph{genus}} of the lattice, i.e.\ the set of isometry
classes $\set{L_p}_{p\text { prime}}$ together with
$L\otimes\bR$. The same holds for $q_L$ if $L$ is even. 

\begin{example} \label{exmpl:torsionqf} Some torsion forms play a role later on.
For calculations on the root lattices, see  for example \cite[Table 2.4]{mwlatts}.
\begin{enumerate}
\item The lattice $\la n \ra$ with $n$ even has discriminant group
$\bZ/n\bZ$ and discriminant quadratic form which on $\bar x$ takes
the value $\frac 1n \bar x\in \bQ/\bZ$. The form is denoted $\la
\frac{1}{n}\ra$.
\item The discriminant group of the root lattice $A_n$ is the cyclic
group $\bZ/(n+1)\bZ$. The discriminant quadratic form assumes the
value $-n/(n+1) \in \bQ/\bZ$ on the generator and is denoted
$\la \frac {-n}{n+1}\ra$.
 \end{enumerate}
 \end{example}
 
\par A celebrated result of V. Nikulin~\cite[Cor.\ 1.16.3]{nikulin1} emphasizes
the role of the discriminant form in determining the genus:
 
\begin{thm*} The genus of  non-degenerate lattice is completely determined by
its type (i.e., being  even or odd), its rank, index and discriminant form.
\end{thm*}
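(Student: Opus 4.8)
The plan is to split the genus of $L$ into its local components and argue that each one is recovered from the listed invariants. Recall that the genus consists of the isometry classes of the completions $L_p = L\otimes\bZ_p$ for all primes $p$, together with the real quadratic space $L_\bR = L\otimes\bR$, and that the discriminant form is local: $A(L)=\bigoplus_p A(L)_p$ orthogonally, where the $p$-primary part $A(L)_p$ carries precisely the discriminant form of $L_p$. So it is enough to show that the type (even or odd), the rank, the index — equivalently the signature $(n_+,n_-)$ — and the discriminant form together determine each $L_p$ and $L_\bR$ up to isometry. The real place is immediate: by Sylvester's law of inertia $L_\bR$ is determined up to isometry by its signature, which the rank and the index encode; so the real work is at the finite primes.

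Next I would treat the odd primes $p$. Every $\bZ_p$-lattice has a Jordan splitting $L_p\cong\bigoplus_{k\ge0}p^kU_k$ with each $U_k$ unimodular, and for $p$ odd this splitting is unique up to isometry; the class of $L_p$ is then determined by the ranks $r_k=\operatorname{rk}U_k$ and the square classes $\det U_k\in\bZ_p^*/(\bZ_p^*)^2$. The localization at $p$ of the discriminant form is the orthogonal sum over $k\ge1$ of the torsion forms attached to $p^kU_k$, so it exhibits $r_k$ and $\det U_k$ for every $k\ge1$. It remains to pin down the unimodular constituent $U_0$: its rank is $r_0=\operatorname{rk}L-\sum_{k\ge1}r_k$, and its determinant is forced because $|\det L|=|A(L)|$ is read off the discriminant form while $\operatorname{sign}(\det L)=(-1)^{n_-}$ is read off the index, so that the integer $\det L$, and hence $\det U_0\in\bZ_p^*/(\bZ_p^*)^2$, is determined. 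Thus $L_p$ is determined.

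The hard part will be the prime $p=2$, where Jordan splittings are no longer unique and the parity of the constituents intervenes. Here I would invoke the $2$-adic classification — Conway--Sloane's canonical $2$-adic symbols, or Nikulin's own normal forms — according to which the class of $L_2$ is determined by its Jordan constituents together with their ranks, determinants, types, and the oddity and sign invariants; and then check that the discriminant quadratic form $q_L$ when $L$ is even, respectively the discriminant bilinear form $b_L$ together with the knowledge that $L$ is odd, recovers every non-unimodular $2$-adic constituent with these refined invariants. The unimodular constituent is then fixed by its rank and determinant exactly as above, by its type (forced by whether $L$ is even or odd), and by the remaining discrete invariant, which is determined through the Hasse--Minkowski product formula once the real place and all odd primes are in hand. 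The converse is immediate: lattices in the same genus share rank, index and type, and have isomorphic discriminant forms by the local decomposition of $A(L)$ recalled above. The only genuine obstacle is this $2$-adic bookkeeping — verifying that parity, oddity and the Hasse invariant are really pinned down by the discriminant form together with the global signature; the archimedean and odd-$p$ parts are routine.
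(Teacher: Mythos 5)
The paper itself offers no proof of this statement: it is quoted as a known result of Nikulin with the reference \cite[Cor.\ 1.16.3]{nikulin1}, so there is no internal argument to compare against and your text has to stand on its own. Its overall frame is the standard and correct one, and two thirds of it are fine: the real place is Sylvester's law, and for odd $p$ your Jordan-decomposition argument is complete (the $p$-part of the discriminant form exhibits the ranks and determinant classes of the constituents of scale $\ge 1$, and the unimodular constituent is pinned down by the rank, by $|A(L)|=|\det L|$, and by the sign $(-1)^{n_-}$ of $\det L$). The gap is that the entire content of the theorem is concentrated at $p=2$, and there you have written a plan, not a proof: ``I would invoke the $2$-adic classification \dots and then check that the discriminant form recovers every non-unimodular $2$-adic constituent.'' That check is precisely what has to be done, and it is not routine. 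Two-adic Jordan splittings are not unique (determinant signs and oddities can be traded between adjacent scales --- ``sign walking'' and ``oddity fusion'' in Conway--Sloane's terminology), the discriminant bilinear form does not in general determine its quadratic refinement on the $2$-primary part (already on $A\cong\bZ/4\bZ$ the refinements with generator values $1/4$ and $5/4$ in $\bQ/2\bZ$ have the same bilinear form), and in the odd case the hypothesis hands you only $b_L$ together with the parity. So one must actually prove that what survives of the $2$-adic symbol modulo these ambiguities is exactly captured by $(q_L,\,\text{signature})$ in the even case and by $(b_L,\,\text{parity},\,\text{signature})$ in the odd case, with Milgram's formula (equivalently the oddity formula) supplying the missing mod-$8$ datum. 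Your appeal to the product formula is the right idea for fixing the last invariant of the unimodular constituent, but it is asserted rather than carried out, and by itself it does not settle the parities and oddities of the non-unimodular constituents.

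In short: archimedean place and odd primes are done; the $2$-adic step, which \emph{is} the theorem, is missing. To close it you would either have to carry out the canonical $2$-adic symbol bookkeeping in full (including the interaction between compartments, trains, and the global signature constraint), or reduce to the even case and quote Nikulin's uniqueness statement for even lattices --- which is in effect what the paper does by simply citing \cite{nikulin1}. As it stands, the proposal assumes at $p=2$ a statement essentially equivalent to the one being proved.
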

 
It is well known that the number of isometry classes in a genus is
finite. It is also called the \textbf{\emph{class number}} of the
genus.  We state a criterion for class number $1$ due to M.\ Kneser
\cite{kneser} and V.\ Nikulin \cite[1.13.3]{nikulin1}:
    
\begin{theorem} Let $L$ be a non-degenerate \textbf{indefinite} even lattice
of rank $r$. Its class number is $1$ if the discriminant group of $L$ can be
generated by $\le r-2$ elements.  Hence, in this case $L$ is uniquely
determined by its rank, index and the discriminant quadratic form.
\label{thm:kneser}
\end{theorem}

\begin{example} \label{exmpl:torsionqfBIS}  Any indefinite odd unimodular
lattice of signature $(s,t)$ is unique in its genus and represented by
a diagonal lattice of the form $\operp^s \la 1 \ra\operp \operp^t \la
-1\ra$.  Any even unimodular lattice of signature $(s,t)$ is unique in
its genus, satisfies $s-t \equiv 0 \bmod 8$ and if $t\ge s$, is
represented by $\operp^{s} U\operp \operp^{\frac{1}{8}(t-s)}
E_8(-1)$. For instance, the intersection lattice of a K3 surface is isometric
to $\operp^3U \operp\operp^2 E_8(-1)$.
\end{example}

Although the lattices we encounter are odd, the preceding result will
be applied to certain even sublattices. Here we use a topological
result which we recall now.  For any compact orientable
$4$-dimensional manifold $X$ the second Stiefel--Whitney class $w_2$
is a characteristic class for the inner product space $H^2(X,\bbF_2)$,
i.e. $w_2.x+x.x=0$ for all classes $x\in H^2(X,\bbF_2)$.  To pass to
integral cohomology one uses the reduction mod $2$ map, induced by the
natural projection $\bZ\to \bZ/2\bZ$:
\begin{align*} 
 \rho_2 :H^2(X,\bZ) \to  H^2(X,\bZ/2\bZ)  .
 \end{align*}
Any lift of $w_2$ under $\rho_2$ is an integral characteristic
element since the intersection pairing is compatible with reduction
modulo $2$.  In the special case where $X$ is a compact almost
complex manifold of complex dimension $2$, there is a canonical
choice for a lift, namely the first Chern class $c_1$. In our
situation we apply it to lattices orthogonal to the class of a fiber
of an elliptic fibration.

\subsection*{Application} 
Recall that in the projective situation the orthogonal complement of
the Picard lattice is the transcendental lattice, the smallest
integral sublattice of $H^2$ whose complexification contains $H^{2,0}$.

\begin{proposition}  \label{prop:translatt}  
The transcendental lattice $T$ for a generic member of the universal
family of quasi-smooth surfaces of degree $d=a+b+4$ in $\bP(1,2,a,b)$
is even  and 
\begin{description}
 
\item[for type 1:] $T\simeq\operp^2 U\operp \operp^2 E_8(-1)$;
\item[for type 2:] $T\simeq \la 2\ra\operp U\operp^2 E_8(-1)$;  
\item[for type 3:] $T \simeq \operp^2 E_{8}(-1)\operp A_{2}$;
\item[for type 4:] $T \simeq \operp^2 U \operp \operp^{2}E_8(-1) $.
\end{description}
  
\end{proposition}
\begin{proof}   Recall that in  the algebraic  situation the transcendental
lattice and the Picard lattice are each  other's orthogonal complement and
both are primitively embedded in $H^2$.  In order to apply
Theorem~\ref{thm:kneser}, we observe that for a non-degenerate primitive
sublattice $S$ of a unimodular lattice $L$ and its orthogonal complement
$T=S^\perp$, one has $A_S\simeq A_T$ while $q_S\simeq -q_T$.

As previously observed, the transcendental lattice is a birational
invariant and so we may and do compute it on the minimal model $X'$ if
it differs from $\widetilde X$.  Proposition~\ref{prop:InsAndSIngs}
states that $H^2(X')$ has rank $22$ and signature $(3,19)$. If $X'$ is
a K3 surface the lattice $H^2(X')$ is even. In the other two cases it
is odd, since the stated multisections have odd self-intersection
number. However, the class $c_1(X)=-K_X=- F_0$ is a characteristic
class and so $x.x$ is even for classes  $x$  orthogonal to the class of a
fiber. In particular, the transcendental lattice is even. 

 \par We
recall that Corollary~\ref{cor:OnPic} gives the Picard lattices for
the minimal model $X'$ of the generic member of each of the four
families.  The Picard lattice for type 1 surfaces is isometric to the
unimodular lattice $ \la1\ra\operp \la -1\ra  $.    Hence
the generic transcendental lattice is even, unimodular and of signature $(2,18)$.  So, using
Example~\ref{exmpl:torsionqfBIS},  it is isometric to 
$  \operp^2  U   \operp  \operp^{2}E_8(-1)$  and a similar argument
applies ifor type 2.

\par  For type 3  the Picard lattice is    isometric to $U \operp A_2(-1)$. 
 The transcendental lattice is an even lattice
 of signature $(2,16)$ and discriminant form the one of $A_2 $.  Up to isometry, 
 there is only one lattice, the   lattice 
 $A_2   \operp \operp^2 E_{8}(-1)  $.  
 For type 4  the Picard lattice is isometric to $U$.
 So by Example~\ref{exmpl:torsionqfBIS},  in this case  
 the generic  transcendental lattice is isometric to 
 $ \operp^2  U   \operp  \operp^{2}E_8(-1)$.
  \end{proof}

\subsection{Lattice polarized variations} \label{sec:LPV}
Recall that in each of the four cases the Hodge structure  on $H^2$
and on $H^2_{\rm prim}$ is  of K3-type since $h^{2,0}=1$. So  the period
domain associated to $H^2_{\rm prim}$ is
 \[
  D(H^2_{\rm prim})= \sett{[\omega]\in \bP(H^2_{\rm prim}\otimes\bC)}
  {\omega\wedge \omega=0, \, \omega\wedge \overline{\omega}>0},
 \] 
a domain of K3-type of dimension $b_2 - 3= h^{1,1}_{\rm prim}$.  Since
the modular families (cf. Definition~\ref{dfn:ModFam}) in all cases
have generic Picard lattice of rank $\ge 2$, the associated period map
is not surjective.  In such cases one uses the smaller domain 
 \begin{equation}
\label{eqn:PerDom}
  D(T)= \sett{[\omega]\in \bP(T\otimes\bC)} {\omega\wedge \omega=0, \, \omega\wedge \overline{\omega}>0},
  \end{equation} 
associated to the general transcendental lattice $T\subset
H^2(X,\bZ)_{\rm prim}$ for the modular family for $X$. One speaks then
of \textbf{\emph{lattice polarized families}} and their associated
variations of Hodge structure. More precisely, these are the
$S$-polarized variations, where $S =T^\perp$ is the generic Picard
lattice. 
 Since the period maps for the generic member of a  modular family are  immersions (cf. Proposition~\ref{prop:GenPerMap}), 
Proposition~\ref{prop:translatt} gives rise  to  the following table:

 \begin{center}
\begin{tabular}{|c|c|c|  }
\hline
cases   &  $\dim D(T)$  &  generic rank of period map   \\
 \hline
  1.   $(14,[1,2,3,7]) $  &      $18$   & $18$   \\
 2.   $(12,[1,2,3,5])$   &      $17 $  & $17$\\
3.    $(16,[1,2,5,7])$   &    $16 $  & $16$ \\
\, 4.  $(22,[1,2,7,11])$     & $ 18$  & $ 18$
 \\
\hline
\end{tabular}
\end{center}

   \section{Associated variation of mixed Hodge structure}

\subsection{Mixed Torelli} \label{ssec:MixedTorelli}

We assume now that $X$ is an elliptic surface of type 1 or 2. 
 
We first  describe   a  subvariety\footnote{There might be other subvarieties where this holds as well.}
  inside  $\cM_{ 3,7}$, respectively inside  $\cM_{ 3,5}$ 
where the infinitesimal Torelli fails  and to which the basic surfaces belong.

\par Let  $W\subset U_{1,2,3,7}$ be  the sublocus of normal forms with  $G_0=G_5=0$. It has
 dimension $13-2=11$. A quasi-smooth surface $V(g)$ with  $g\in W$ is invariant under the involution
$\iota(x_0)=-x_0$ and $\iota$ acts on the Jacobian ring $R$.  The $+1$  eigenspace $ R_{14} ^{+ }$ of $\iota$ on 
$R_{14}$  corresponds to first  order deformations which are tangent to $L$ and transverse to the $T$-orbit,
confirming  that
$\dim (W/\!/T)=\dim  R_{14}^{+ }= 11$.  

The forms $F ( \Omega_3/  g^2)$, $F\in  R_{15}^{- }$ whose residues belong to the primitive  $(1,1)$-part of 
$H^2(V(f))$,  are all $\iota$-invariant and descend to give rational forms on 
 the quotient of $V(g)$ by $\iota$.  This is a  quasi-smooth degree $14$
surface $S\subset \bP(2,2,3,7)$ and appears as number 22 on Reid's list of
 95 K3 surfaces. In the present case the primitive part   $  H^{2}_{\rm prim} (S)$ has Hodge numbers $(1,10,1)$.   
The residue map sending $F\in  R_{15}  ^{- }$ to its residue on $V(f)$   induces  a homomorphism 
$ R_{15} ^{- } \to   H^{1,1}_{\rm prim}(S)$. 
One checks that it is generically an isomorphism and so it follows that the
multiplication map $ \text{mult}_{x_0}: R_{14}^{+ } \to     R_{15}^{- }$ 
is a map from a vector space of dimension 11 to a vector space of dimension 10, and hence always has a 1-dimensional kernel.

In  $U_{1,2,3,5}$ we consider 
 the sublocus $W'$ of normal forms given by $G_4=0$ and the vanishing of two terms in $G_2$ (only
the one containing $x_0^3x_2$ is allowed) has dimension $13$ and so $\dim (W'/\!/T)=\dim  R_{12}^{+ }= 10$

The quotient by the involution $x_0\mapsto -x_0$
is a   K3 surface in $\bP(2,2,3,5)$ of degree 12 which is number 18 in Reid’s list. 
Its  primitive cohomology has Hodge numbers $(1,9,1)$. The same argument shows that 
the multiplication map $ \text{mult}_{x_0}:  R_{12}^{+ } \to    R_{13}^{- }$ in this case 
always has a 1-dimensional kernel.
\par
In these cases
$\widetilde X=X'$ is fibered over $\bP^1$ with a unique double fiber
$2 F_0$ and $K_{X'} \simeq F_0$.   
We set $ U := X' \setminus {F_0}$
and we consider the variation of mixed Hodge structure on $H^2(U)$ when
$X$ varies in  
  one of the  subfamilies of a modular family   coming from the codimension $1$ sublocus  
$W\subset U_{1,2,3,7}$, respectively $W'\subset U_{1,2,3,5}$, where Torelli fails. 
For brevity we call the resulting
variation the \textbf{\emph{subcanonical modular variation of mixed Hodge structure.}}

\begin{theorem} \label{thm:Main}    The period map of the subcanonical   modular  variation of mixed Hodge structure
  of type  1 or type 2 surfaces is an immersion.
\end{theorem} 
\begin{proof}
We first determine the mixed Hodge structure on $H^2(U)$ by means of
the exact Gysin sequence
\[
0\to H^0({F_0})(-1) \mapright{i_*} H^2( X')\mapright{j^*} H^2(U)
\mapright{\text {res}} H^1({F_0})(-1)\to 0,
\]
where $i:{F_0}\into X'$, $j:U\into X'$ are the inclusions and "res" is
the residue map. We see that $W_2 H^2(U)\simeq H^2(X')/H^2({F_0})(-1)$
and that $\gr^W_3 H^2(U)\simeq H^1({F_0})(-1)$. 

 We next consider the variation of mixed Hodge structure given
by $H^2(U_F)$ where $U_F= U\setminus F$ and $F=F_0+ G$ is a deformation of 
a quasi-smooth reference surface  $F_0$,  and $G$  varies over an open
neighborhood of  $F_0$  in a base of a Kuranishi family  as described  by
Corollary~\ref{cor:ModFam}.   So tangent directions are
identified with polynomials $G\in R^d$.  The infinitesimal variation
is described by the Higgs fields\footnote{For background on Higgs fields
  in the mixed setting, see \cite{HolBisect}.}
$\theta^2_\xi : I^{2,0}\to I ^{1,1}$,
$\theta^3_\xi: I^{2,1 }\to I ^{1 ,2}$ in the direction of $\xi$.  The map
$\theta^2_\xi $ is induced by the map $\mu$, c.f. Eqn.~ \eqref{eqn:mu} and
if $\xi$ corresponds to the polynomial $G$, is represented by the
multiplication
\[
R_F^1\mapright{\cdot G} R_F^{d+1}, \quad G\in R_F^{d}.
\]

We consider the following two particular cases (belonging to the locus $L$, respectively $L'$):
 \begin{itemize}
 \item type 1:   $F_0 = x^{14} + y^7 + yz^4 + w^2 + x^2y^3z^2$,
                $\eta=x^{12}y + (1/7)y^4z^2$
\item type 2:  $F_0 = x^{12} + y^6 + z^4 + yw^2 + x^2y^2z^2$,
                $\eta=x^{10}y + (1/6)y^3z^2$
  
 \end{itemize}

\noindent In ~\ref{ssec:Manual}, it is shown that $V(F_0)$ is quasi-smooth and in \ref{ssec:KernelCode} that
$\theta^2_{\xi}$ is injective except in the direction $\xi=\eta$.

\par To calculate $\theta^2_{\eta}$, we consider the family of canonical
curves $E_t$ attached to the surfaces $X_t=V(F_0+t\eta)$.  In case $(a)$,
$E_t$ is defined by $y^7 + yz^4 + w^2 + t(1/7)y^4z^2$.  Moreover,
$H^{1,0}(E_0)$ is generated by $y$ while $H^{0,1}(E_0)$ is generated by 
$y^5z^2$, and hence multiplication by the tangent direction $y^4z^2$      
is injective. For type  2, $E_t$ is defined by
$y^6 + z^4 + yw^2 + t(1/6)y^3z^2$.  Moreover, $H^{1,0}(E_0)$ is generated by
$y$ whereas $H^{0,1}(E_0)$ is generated by $y^4z^2$, and hence multiplication
in the tangent direction $y^3z^2$ is injective.

\par This takes care of the direction in which $\theta^2_\xi $ fails to be
injective, and shows that the period map not only is injective along $W/\!/T $, respectively $W'/\!/T$, but -
 by the lower semi-continuity of the rank function -  at the generic point of the moduli spaces $\cM_{3,7}$ and $\cM_{3,5}$ .
\end{proof}

\subsection{Rigidity: the pure case}

We first consider rigidity for the pure polarized variations of Hodge
structure.  To avoid confusion, we explain the rigidity concept we use
here.  A variation of Hodge structure comes with a period map
$f : S \to \Gamma\backslash D$, where $D$ is a period domain classifying the
kind of Hodge structures underlying the variation, and $\Gamma$ is the
monodromy group of the variation. Rigidity in this setting is a rather
restricted concept:

\begin{definition}
A deformation of a period map $f : S \to \Gamma\backslash D$ consists
of a locally liftable horizontal map $ F:S \times T\to\Gamma\backslash D$
extending $f$ in the obvious way.  If no such
deformation exists except $f\times \id$, the map $f$ is called rigid.
\end{definition}

Recall that the essential part of a K3-type  variation is given by the
variation on the generic transcendental lattice. Since we have a
weight $2$ variation,  one may apply  
\cite[Theorem 3]{MaxRank}. In our case this  implies: 

\begin{prop*} If  the period map associated to the essential part of
a K3-type variation over a quasi-projective variety has rank $\ge 2$, it is
rigid in the above sense.
\end{prop*}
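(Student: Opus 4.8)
The plan is to recognise the essential part of a K3 variation as a polarised variation of Hodge structure of weight two and then to quote the rigidity result of \cite{MaxRank}. So let $\mathbb{V}$ be the essential part: a polarised weight two variation over the smooth quasi-projective base $S$, with $h^{2,0}(\mathbb{V})=1$ and $h^{1,1}(\mathbb{V})=r-2$, where $r=\operatorname{rank}\mathbb{V}$. Its classifying space is the Type IV bounded symmetric domain
\[
  D=\sett{[\omega]\in\bP(\mathbb{V}_\bC)}{\omega\wedge\omega=0,\ \omega\wedge\overline{\omega}>0}
\]
(a fibre of the local system being written $\mathbb{V}_\bC$), of dimension $r-2=h^{1,1}(\mathbb{V})$, and $\Gamma$ is the image of the monodromy representation. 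Since $\dim D$ coincides with the dimension of the horizontal tangent space $\hom(H^{2,0},H^{1,1})$, the horizontal subbundle of $T_D$ is all of $T_D$; hence the period map $f\colon S\to\Gamma\backslash D$ is automatically horizontal, and it is locally liftable by construction. We are thus exactly in the setting of \cite{MaxRank}.

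Now let $F\colon S\times T\to\Gamma\backslash D$ be a deformation of $f$ in the sense of the Definition, with $T$ connected and pointed by $o$, and $F|_{S\times\{o\}}=f$. Because $F$ is locally liftable and horizontal, each restriction $F_t\colon S\to\Gamma\backslash D$, $t\in T$, is again locally liftable and horizontal, hence is the period map of a polarised weight two variation on $S$; its monodromy representation $\pi_1(S)\to\operatorname{Aut}(D)$ takes values in a discrete subgroup and varies continuously with $t$, so it is independent of $t$ and equals that of $f$. In particular every $F_t$ is the period map of a K3 variation of the same numerical type, and its period map has the same rank as $f$, namely $\ge 2$. Fixing $t$, the section $s\mapsto(\partial F/\partial t)(s,t)$ of $F_t^{\ast}T_{\Gamma\backslash D}$ is a holomorphic horizontal infinitesimal deformation of the period map $F_t$; by the rigidity theorem of \cite{MaxRank} a weight two polarised variation whose period map has rank $\ge2$ admits no non-zero infinitesimal deformation. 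Hence $\partial F/\partial t\equiv0$ for every $t$, and as $T$ is connected this gives $F=f\times\id$, which is the asserted rigidity. This applies in particular to the four families of this paper, whose essential period maps have rank $\ge2$ by Proposition~\ref{prop:LatPolVars} (note that this is compatible with the failure of Torelli in cases (a) and (b): there the period map has positive-dimensional fibres but still rank $17$, resp.\ $16$).

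The substantive point, and the step I expect to be the real obstacle, is the vanishing invoked from \cite{MaxRank}: that a horizontal holomorphic deformation field of a weight two period map of rank $\ge2$ is identically zero. This is where the rank hypothesis is essential; it cannot simply be dropped, since for a rank one period map the curvature input below does not yield the required negativity. The mechanism, as in the harmonic-map and Bochner approach to rigidity of maps into locally symmetric spaces of non-compact type, is to equip $\Gamma\backslash D$ with the Hodge metric, to combine the Griffiths--Schmid curvature estimates (non-positivity of the holomorphic sectional curvature along horizontal directions) with the rank $\ge2$ condition so as to obtain strict negativity transverse to the image of $f$, and then to conclude by a Bochner-type identity that any such holomorphic deformation field must be parallel, hence zero because $D$ has no Euclidean de Rham factor. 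We do not reproduce this computation; we only record that our weight two variations satisfy the hypotheses of \cite{MaxRank}, so the Proposition follows.
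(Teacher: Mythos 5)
Your proposal takes essentially the same route as the paper: the paper offers no argument beyond observing that the variation has weight two and invoking the rigidity results of \cite{MaxRank}, and your write-up is that same citation wrapped in the standard reduction from a global deformation $F$ to the vanishing of the infinitesimal deformation field, which is fine. The only quibble is your claim that every $F_t$ has rank $\ge 2$ --- the rank is merely lower semicontinuous in $t$, so this is guaranteed only on an open set of parameters; but vanishing of $\partial F/\partial t$ there together with the identity theorem (and connectedness of $T$) already forces $F=f\times\id$, so nothing essential is lost.
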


\par 
Taking into account the  possible  failure  
of infinitesimal Torelli over certain subvarieties parametrizing modular families,  
 we thus find the following rigidity
results:

\begin{proposition}  
The essential part of a variation of type  1--4
is rigid if the period map has rank $\ge 2$.
In particular   the variation over a quasi-projective
subvariety $S$ of a modular family of dimension $\ge 3$ is rigid for types 1
and 2, and if $\dim S\ge 2$  for types 3  and    4.
\label{prop:PureRigid}
\end{proposition}

\subsection{Rigidity for mixed period maps} 

 Just as in the proof of \cite[Prop. 7.2.5]{HolBisect}, we deduce from
 the rigidity in the pure case:

\begin{corollary} \label{cor:Main} Consider for a family of  type 1 or type 2 surfaces 
the   complements  of the supports of the canonical
 curve. If the mixed period map of the resulting  family has rank $\ge 3$, the family is rigid. 
\end{corollary}

\begin{proof}\cite[Prop. 7.2.5]{HolBisect} states that it is sufficient to
show the following three properties of a family as described in the
assertion, say ${\widetilde X_s}$, $s\in S$, $S$ smooth and
quasi-projective.
\begin{enumerate}
\item The family of canonical curves in $ \widetilde X_s$ is rigid.
\item The essential part of the K3 variation has a non-constant
period map and is rigid. 
\item The mixed period map is an immersion.  
\end{enumerate}
 
1  follows for the weight one variation from  the moving curve $F_0$ from \cite[Theorem 3]{MaxRank} since
in the
course  of the  proof   of Theorem~\ref{thm:Main} we proved that
the period map for the canonical curves in $\widetilde X_s$ is not
constant. 

 2  is the statement of Proposition ~\ref{prop:PureRigid}
and  3  is Theorem~\ref{thm:Main}.
\end{proof}
 
 \section{An Application to KSBA Theory}
 \label{sec:KSBA}
 
Recall that the  $28$ dimensional moduli space 
${\bf M}$ of surfaces of general type with $K^2=1$, $p_g=2$ and $q=0$ admits a
KSBA compactification $\overline{\bf M}$ proposed by and named after Koll\'ar,
Shepherd--Barron and Alexeev. 
See~\cite{GPSZ,RT} for recent results about these surfaces.

In this section, we give an application of the
results of this paper to the Hodge theory of some of the boundary divisors of
the KSBA compactification.  

\subsection{Overview of the results of \cite{GPSZ}} 
The generic member of ${\bf M}$ has a canonical model which is a
quasi-smooth hypersurface in $\bP[1,1,2,5]$.  After completing
the square, such a surface can be put in the form:
\begin{equation}
     w^2 = f(x,y,z),\qquad \deg(x)=\deg(y)=1,\quad\deg(z)=2,\quad
     \deg(w)=5 \label{eq:horikawa-form}
\end{equation}
For each of Arnold's exceptional unimodal singularity of type
\begin{equation}
  \Sigma = E_{12}, E_{13}, E_{14}, Z_{11}, Z_{12}, Z_{13}, W_{12}, W_{13}
  \label{eq:Arnold-types}
\end{equation}
  a corresponding boundary divisor $D_{\Sigma}$ of the
KSBA compactification $\overline{\bf M}$ of ${\bf M}$ is constructed.

\par To construct the boundary divisors $D_{\Sigma}$, we start with
a triple $(p,q,d)$ of positive integers and let $V$
denote the homogeneous component of degree 10 of $\mathbb C[x,y,z]$
with respect to \eqref{eq:horikawa-form}. Let
$\omega$ be the weight function which is defined on the monomials
of $V$ by the rule
\begin{equation}
\label{eqn:weightrule}
\omega(x^ay^bz^c) = pb+qc-d
\end{equation} 
Then, $\omega$ determines a $\mathbb C^*$-action on $V$ by the linear
extension of the rule:
$$
    t\ast (x^a y^b z^c)
    = \left\{\aligned t^{-\omega}x^ay^bz^c,\qquad\omega\leq 0 \\
                      x^ay^bz^c,\qquad \omega>0 \endaligned\right.
$$
By construction, this action extends continuously to a holomorphic map
$\mathbb C\times V\to V$.

\par Given a polynomial $f\in V$, let
$$
   \mathcal S(f)
    = V(w^2-t\ast f)\subseteq\bP(1,1,2,5)\times\mathbb C
$$
and $\pi:\mathcal S(f)\to\mathbb C$ be the morphism
$\pi((x:y:z:w),t)=t$. Let $\mathcal S_t(f) = \pi^{-1}(t)$.   The fiber
$\mathcal S_0(f)$ is defined by the monomials of non-negative degree
with respect to $\omega$.  For a generic polynomial $f$ of degree 10
and a suitable choice of $(p,q,d)$, the branch curve $B_0$ of
$\mathcal S_0(f)$ has a unique singularity at $(1:0:0)$ of type $\Sigma$
appearing in \eqref{eq:Arnold-types} (see~\cite{GPSZ} for the details).

\par In the setting described in the previous paragraphs, the KSBA
program compactifies the germ of the family $\mathcal S(f)\to\mathbb C$ at
$t=0$ by replacing $\mathcal S_0(f)$ with a new central fiber
$\tilde Z_f\cup\tilde Y_f$ where
\begin{itemize}
\item $\tilde Z_f$ is birational to a $(p,q)$-weighted blow up of
$\mathcal S_0(f)$ at $(1:0:0)$.  The surface $\tilde Z_f$ has at worst
rational singularities and $h(\mathcal O)=1$;
\item $\tilde Y_f$ is an ADE $K3$-surface which is defined by the
monomials of non-positive weight.  More precisely, in terms of
the data $(p,q,d)$, $\tilde Y$ is a degree $d$ hypersurface in
$\bP(1,p,q,d/2)$ when $d$ is even and degree $2d$ in
$\bP(1,2p,2q,d)$ when $d$ is odd.
\item $\tilde Z_f$ and $\tilde Y_f$ are glued together along a common
  $\bP(p,q)$.  
\end{itemize}
Fixing the data $(p,q,d)$ and varying the polynomial $f$ defines a
divisor $D_{\Sigma}$ in the KSBA compactification of ${\bf M}$.
Moreover, by the results of section 4 of~\cite{GPSZ}, there exists a
Zariksi open subset $\mathscr{U}\subseteq D_{\Sigma}$ over which there
exists a flat, proper family $p:\mathscr{S}\to\mathscr{U}$ whose
fibers $p(u) = \tilde Z_u\cup\tilde Y_u$ are surfaces of the
type described above.

\begin{remark}Preliminary calculations show that the  framework described above
is generally applicable to hypersurface degenerations in weighted projective
3-space, provided that certain numerical conditions hold, i.e. a choice of
weight function $\omega$ defines a family of surfaces
$\mathcal S(f)\to\mathbb C$ whose central fiber can be modified by adjoining
a \lq\lq tail surface\rq\rq{} to obtain a KSBA stable limit.  The details
will appear in a follow up to~\cite{GPSZ}.
\end{remark}

\par By~\cite{SZ}, there exists a Zariski open subset
$\mathscr{U}_1\subseteq\mathscr{U}$ over which
$\mathscr{V} = R^2  p_* (\bQ)$ is the underlying
$\mathbb Q$-local system of an admissible variation of
graded-polarizable mixed Hodge structure.
 Therefore, $\mathscr{H} = \text{Gr}^W_2(\mathscr{V})$ is a variation
of pure Hodge structure of weight 2 over $\mathscr{U}_1$.  Given
a $\mathbb Q$-Hodge structure $A$ of weight 2 with $F^3A_{\mathbb C}=0$
let $T[A]$ denote the smallest $\mathbb Q$-Hodge substructure of $A$
which contains $F^2A_{\mathbb C}$.  By the results section 6
of~\cite{GPSZ},  there is a Zariski open subset
$\mathscr{U}_2\subseteq\mathscr{U}_1$ such that
$$
       u\in\mathscr{U}_2 \implies
       T[\text{Gr}^W_2(\mathscr{H}_u)]
       = T[H^2(\tilde Z_u)]\oplus T[H^2(\tilde Y_u)]
$$

\par For generic $f\in V$, let $\varphi_f:\Delta^*\to\Gamma\backslash D$
denote the local period map of $\pi:\mathcal S(f)\to\mathbb C$ near
$t=0$.  By the results of section 6 of~\cite{GPSZ}, $\varphi_f$ has
finite local monodromy, and hence the limit mixed Hodge structure
$H_{\rm lim}(f)$ of $\varphi_f$ is pure.  Moreover,
\begin{equation}
\label{eqn:LHS} T[H_{\rm lim}(f)] = T[H^2(\tilde Z_f)]\oplus T[H^2(\tilde Y_f)].
\end{equation}        
Thus, at the loss of the information contained in the finite monodromy
of $\varphi_f$, we are justified in calling the transcendental
part of $\mathscr{H}$ the limit variation of Hodge structure of
${\bf M}$ along $D_{\Sigma}$.

\begin{remark}The moduli count for the surfaces $\tilde Z_\Sigma$
 in terms of the Milnor number of $\Sigma$ is given by 
$29-\mu_\Sigma$ whereas the moduli count for the surfaces $\tilde Y$ is
$\mu_\Sigma -2$, adding up to $27=28-1$ which suggests that we have a divisor.
That indeed  $D_{\Sigma}$ is a divisor   corresponds to the fact that these
two components can be deformed independently (see~\cite{GPSZ} for details).

 Since $\mu_\Sigma $  is the index of $\Sigma$ in the list
\eqref{eq:Arnold-types}, the above formulae give the moduli for each of the
components. 
\end{remark}
\par 
At this point, it is natural to ask:
\begin{enumerate}
\item What is the birational type of surface $\tilde Z_\Sigma$?
\item Does the period map of the limit variation of Hodge structure
constructed above have positive dimensional fibers?
\end{enumerate}
For the unimodal singularities of types $Z_{11}$, $Z_{12}$,
$Z_{13}$, $W_{12}$ and $W_{13}$, the answer to the first question is that
they are birational to $K3$ surfaces.  As explained in the last section
of~\cite{GPSZ}, one sees this by simply multiplying the defining
equation of $\mathcal S_0(f)$ by $y^2$ and considering the birational
transformation
$$
    (x:y:z:w)\in\bP[1,1,2,5]
    \dashrightarrow (xy:y^2:z:yw)\in\bP(2,2,2,6)\cong P(1,1,1,3)
$$
which converts a limit surface of type $Z$ or $W$ into an ADE K3
surface which is a double cover of $\bP^2$ branched along a
sextic which intersects a line $L$ in a special configuration
(for example, multiplicities $1$, $2$ and $3$ for the $Z_{11}$
singularity).
\par

All we know is that the  answer to the second question is "no" for the $E_{13}$ case, as we now explain.
By Equation~\eqref{eqn:LHS} the period map is governed by
the restriction to the  transcendental part of the $Z$-surface  and of the $Y$-surface.
 The $Z$-surface 
(type $ E_{13}$ in $\bP(1,2,3,7 )$) has injective differential 
by  Table~\ref{tab:kernel}.
 The $Y$-surface is a K3  surface in weighted projective space whose period map has  injective differential.
 Since the two surfaces $Y$ and $Z$ deform independently  this shows that the total period map has injective differential.

 \subsection{Relation with the present paper}
 One of the observations which gave rise to this paper is that for the
 singularity types $E_{13}$ and $E_{14}$, the resulting surfaces
 $\widetilde Z$ are birational to a singular hypersurface of degree 14
 in $\bP[1,2,3,7]$ by simply multiplying the defining equation of
 $\cS_0$ by $z^2$ and considering the birational transformation
$$ (x:y:z:w)\in\bP[1,1,2,5] \dashrightarrow (x_0:x_1:x_2:x_3) =
   (y:z:xz:zw)\in\bP[1,2,3,7]
$$
 In fact, both the $E_{13}$ and $E_{14}$ singularities are subvarieties of
the locus $\mathscr{J}$ of degree 14 surfaces in $\bP[1,2,3,7]$
whose singular locus consists of the orbifold point $[0:0:1:0]$ of
$\bP[1,2,3,7]$ and exactly one  $A_1$-singularity which occurs at
a smooth point of $\bP[1,2,3,7]$.

\par By a result of Burns and Wahl~\cite{buwa}, $\mathscr{J}$ should
have codimension 1 in the moduli of hypersurfaces of type $(14,[1,2,3,7])$. For
completeness, we give a direct algebraic proof here:  The group of
automorphisms of $\bP[1,2,3,7]$ acts transitively on the smooth points
of $\bP[1,2,3,7]$ (consider the orbit of $\xi=[1:0:0:0]$).  Therefore,
we consider the hypersurfaces $V(f)\in\mathscr{J}$ which have an
$A_1$-singularity at $\xi$ and are given as double covers
$x_3^2=g(x_0,x_1,x_2)$.  The condition that $f(\xi)=0$ implies the
vanishing of the coefficient of $x_0^{14}$ in $f$.  The condition that
$(\nabla f)(\xi)=0$ forces the vanishing of the coefficients of $x_0^{12}x_1$
and $x_0^{11}x_2$ in $f$ as well.  Since we are considering double covers of
$\bP[1,2,3]$, the relevant automorphism group consists of invertible
transformations of the form (cf. \eqref{eq:auto-123})
$$
      (x_0:x_1:x_2) \mapsto
      (a_0 x_0:a_1 x_1 + a_2 x_0^2:a_3 x_2 + a_4 x_0 x_1 + a_5 x_0^3)
$$
The subgroup of elements which fix the point $[1:0:0]$ corresponds to
transformations for which $a_2=a_5=0$, an hence (up to scaling) this
subgroup has $5-2=3$ parameters.  The number of monomials of degree
14 in $\bP[1,2,3]$ is 24.  So, the dimension count for
$\mathscr{J}$ is $(24-3-1) - 3=17$.

\par As noted above, the generic surface of type $(14,[1,2,3,7])$ has
elliptic fiber structure $2I_0 + 24\times I_1$.  
Let $S$ be a generic point of $\mathscr{J}$, $E_{13}$ or $E_{14}$.
In ~\ref{sec:OnEn}, it will be shown that in each case the singular locus of
$S$ consists of the orbifold point $[0:1:0:0]$ of $\bP[1,2,3,7]$
and an $A_1$ singularity at a smooth point of $S$.   In
~\ref{ssec:KernelCode} we compute the rank of the period map in the
$\mathscr{J}$-case and the  $E_{13}$-case. 
This is then shown to lead to:
 
\begin{theorem} \label{thm:OnE13&E14}
 The generic $\mathscr{J}$-type surface as well as  the generic   $E_{13}$ surface and the generic $E_{14}$ surface
 is birational to a type $(14,[1,2,3,7])$ surface which has one
 singular point of type $A_1$ at the point $(1:0:0:0)\in \bP[1,2,3,7]$
 and finite quotient singularities where it intersects the singular
 locus of $\bP[1,2,3,7]$. These surfaces are properly elliptic with $p_g=1$.
 Furthermore,
\begin{enumerate}
\item
The  elliptic fiber  type   of the elliptic fibration  in the
$\mathscr{J}$-case and the  $E_{13}$-case, is given by
$2I_0 + I_2 + 22\times I_1$ and in the   $E_{14}$-case  by
$ 2I_0 +  I_3 + 21\times I_1$. 
 
\item
Let $S_{\rm triv}$ be the "trivial" lattice, spanned by the fibers and the "canonical" multisection. 
The invariants in the three cases are given in the following table:
\begin{center}
\begin{tabular}{|c|c|c|c|c}
\hline 
 & $\mathscr{J}$  & $E_{13}$  & $E_{14}$   \\
 \hline
 $\dim S_{\rm triv}$ & $3$& $3$& $4$\\
\hline
Hodge numbers of $S_{\rm triv}^\perp$& ($1,17,1)$& $(1,17,1$)& $(1,16,1)$ \\
\hline
 $\dim $ of $D ( S_{\rm triv}^\perp)$  & $17$ & $17$  & $16$ \\
 \hline
 number of moduli & $17 $  & $16$ & $15$ \\
\hline
 rank of the period map &$17$ & $16$ & $\geq 14$  \\
\hline
\end{tabular} 
\end{center}
If $S_{\rm triv}$ is the entire Picard lattice, then $S_{\rm triv}^\perp$ is the
transcendental lattice and $D ( S_{\rm triv}^\perp)$ is the associated period domain.
\end{enumerate}
\end{theorem}
 
\medskip

\begin{remark}To prove our results on (mixed) period maps also  to the family
$\mathscr J$,  it is required to extend  the residue calculus for
quasi-smooth hypersurfaces to  the situation  where supplementary ordinary
nodes are allowed, e.g. by extending  the results \cite{dimsaito} of A. Dimca
and M. Saito and so  one expects the residue calculus  to involve  working with
polynomials in the Jacobian ring which vanish at the nodes.
Assuming this to be  the case one  argues as follows: 

(a) Rigidity   for the period map of the  family $\mathscr{J}$ should follow
from Prop. \eqref{prop:ChaksThm} upon applying  \cite[Theorem 3]{MaxRank}. To
show that the family of canonical curves is rigid, we can apply the same
residue calculations of \S~\ref{ssec:MixedTorelli} since the canonical
curve $x_0=0$ does not pass through the $A_1$-singularity at $(1:0:0:0)$.

(b) To prove also that rigidity and mixed Torelli hold in the
$\mathscr{J}$-case, we need to calculate the derivative of the period map
$\varphi:\mathscr{J}\to\Gamma\backslash D$ at a suitable surface
$S\in\mathscr{J}$. If the residue calculus can indeed be applied, we find, 
as expected,  that there are  $17$ deformation parameters.
Furthermore, the code of ~\ref{ssec:KernelCode} shows that for the
particular surface $S=V(f)$ defined by
\begin{equation}
  g = x^2z^4 + yz^4 + x^5z^3 +x^8z^2 + y^7 +x^{10}y^2 - w^2 + x^2y^3z^2
  \label{eq:J-surface}
\end{equation}
where $x=x_0$, $y=x_1$, $z=x_2$ and $w=x_3$, the kernel
of the derivative of the period map has dimension 1.  Indeed (see Table
\ref{tab:kernel}), the code shows that the
kernel of the map  $\ker((S/J)_d\stackrel{x}{\longrightarrow}(S/J)_{d+1})$
is given by 
\begin{equation}
  \eta = x^8y^3 + (4/5)x^6yz^2 + (1/2)x^3yz^3 + (1/5)y^4z^2 + (1/5)yz^4
  \label{eq:kernel-J-surface}
\end{equation}

\par The canonical curve $E_t$ of the surface $X_t=V(g+t\eta)$ is therefore
defined by the equation 
$$
      yz^4 + y^7 - w^2 + t(y^4z^2 + yz^4)/5
$$
A residue calculation shows that $y$ generates $H^{1,0}(E_0)$ whereas
$y^5z^2$ generates $H^{0,1}(E_0)$.  Moreover, $yz^4$ is contained in the
Jacobian ideal of $E_0$ whereas $y^4z^2$ does not reduce to zero modulo
the Jacobian ideal.  Therefore, the derivative of the period map of the
family $E_t$ at $t=0$, which corresponds to multiplication by $y^4z^2$,
is injective.  Thus, as in section 5, the derivative
of the mixed Torelli map is injective at the generic point of 
$\mathscr{J}$.
\end{remark}

\begin{appendix} 
\section{Manual  computations and  \textsc{SageMath} code}  \label{app:SAGE}
 
\subsection{Manual verifications}
\label{ssec:Manual}
 
The Hodge number $h^{2,0}=p_g$ for any of the  type 1--4  weighted
quasi-smooth surfaces  $F=0$ equals  $\dim R^1_F=1$.  The remaining Hodge number
$\dim H^{1,1}_{\rm prim}=\dim R^{d+1}_F$ (constant in families)
can be checked  by hand for  the \textbf{\emph{basic quasi-smooth surfaces}}  from
Proposition~\ref{prop:OurExamples}. This is also  the case for   the number of
moduli  
 $$
 \mu_F= \dim( H^1( T_X)_{\rm proj} ) =\dim(R^d_F) 
 $$
thereby   giving a check for  the results of Section~ \ref{ssec:NormForms}. 
Finally this can also be done for    the dimension of the kernel of the derivative of
the period map 
$
\delta_F= \dim \ker\left( R^d_F\mapright{\cdot x_0} R^{d+1}_F\right)$.
The results are given in the following table
\begin{center}
 \begin{tabular}{|l|c|c|c|}
\hline
Basic example& $h^{1,1}_{\rm prim}$ & $\mu$ & $\delta$ \\
\hline
 $ x_0^{14}+ x_1^7 + x_2^4x_1+   x_3^2$& $18$ & $18$  & $1$ \\
$x_0^{12}+x_1^6+x_2^4+x_1x_3^2$  &$17$ & $17$  & $1$\\
$x_0^{16}+x_1^8+x_0x_2^3+x_1x_3^2$  &$17$ & $16$  & $1$\\
$x_0^{22}+x_1^{11} + x_0 x_2^3+x_3^2$&$18$ & $18$  & $1$  \\
 \hline 
 \end{tabular} 
 \end{center}
Let us carry this out  for the  example (c) where  $J_{F }=(\frac 1{16}x_0^{15}+x_2^3,
\frac 18  x_1^7+ x_3^2,  \frac 13 x_0x_2^2, x_1x_3)$.  
 \begin{center}
\begin{tabular}{|c|c|c|c|}
\hline
$R_{F }^{16}$ & $R_{F  }^{17} $ & $R_{F  }^{16}$& $R_{F  }^{17} $ \\
\hline
 $(14,1,0,0)$  & $(15,1,0)\sim (0,1,3,0)$   &   $(9,0,0,1)$   & $(10,0,0,1)  $  \\
 $(12,2,0,0)$  &  $(13,2,0,0))$ & $(11,0,1,0)$ & $ (12,0,1,0)$ \\
 $(10,3,0,0)$  &  $(11,3,0,0)$ &  $(9,1,1,0) $  & $(10,1,1,0)$ \\
 $(8,4,0,0)$  &  $(9,4,0,0)$ &     $(7,2,1,0)$ & $(8,2,1,0)$\\
$(6,5,0,0)$   & $(7 ,5,0,0)$ & $(5,3,1,0)$  & $(6,3,1,0)$\\
 $(2,7,0,0)$   &  $(3,7,0,0)$     & $(3,4,1,0)$  & $(4,4,1,0)$ \\
 $(0,8,0,0)$ & $(1,8,0,0)$    & $(1,5,1,0)$  & $ ( 2, 5,1,0)$ \\
$(0,3,2,0)$   &  $--  $    &  $(4,0,1,1)$ &  $(5,0,1,1)$ \\
&                 &        &$(0,6,1,0) $   \\
&                 &        &$(0,0,2,1) $   \\
\hline
\end{tabular}
\end{center}
This  table shows that 
$ \delta_F= \ker( R^{16}_{F } \mapright{\cdot x_0} R^{17}_{F } )
= \bC \cdot  x_1^3x_2^2 $ and thus  infinitesimal Torelli does not hold for this
particular surface.
\par As noted earlier, for types  3  and 4, the basic hypersurfaces do not belong to
the moduli spaces $\cM_{5, 7}$, respectively $\cM_{7, 11}$. 
\par
Using the sage code listed below, we will show that $\delta_F=0$ at the generic
point $V(F)$ of the  moduli spaces in cases 1--4.

\subsection{Quasi-smoothness via Groebner basis}\label{ssec:groebner} 
 
To quickly verify quasi-smoothness of the hypersurface $V(g)$  for a given
polynomial $g$ using the \textsc{SageMath} code, we compute a Groebner basis
of the Jacobian ideal of $g$.  In each case, each case, the basis contains
some power of $x_3$.  Setting $x_3=0$ one then finds that $x_2=0$ as well.
This results in a system of equations which is easily solved by hand.
\begin{footnotesize}
\begin{sageblock}
# a=3; b=7;  #Type 1
# a=3; b=5;  #Type 1
# a=5; b=7;  #Type 3
# a=7; b=11; #Type 4
d = a+b+4
R=PolynomialRing(QQ,"x,y,z,w",order=TermOrder("wdeglex",(1,2,a,b)))
x,y,z,w=R.gens() #x=x_0, y=x_1, z=x_2, w=x_3
# Examples from Prop. 3.3.1
# g = x^14 + y^7 + z^4*y + w^2 + x^11*z + x^5*z^3 # Type 1
# g = w^2*y + 2*w*x*(z^2 + x^6) + z^4 + y^3*z^2 + x*y^4*z + y^6 # Type 2
# A = y^7 -w^2;  B = x^8*w + x^3*z*w + (x^(15) + x^(10)*z + x^5*z^2 + z^3 ); 
# g = A*y + B*x; # Type 3
# g = x*z^3 + y^4*z^2 + (x*y^7+x^9*y^3+x^(15))*z + y^11 - w^2 # Type 4
J=g.jacobian_ideal()
B =J.groebner_basis()
[print(b) for b in B]
gx=g.derivative(x); gy=g.derivative(y);
gz=g.derivative(z); gw=g.derivative(w);
print(gx.subs(z=0,w=0),"|",gy.subs(z=0,w=0),"|",gz.subs(z=0,w=0),"|",gw.subs(z=0,w=0))
\end{sageblock}
\end{footnotesize}
Note: If the equation is of the form $g=w^2+wf(x,y,z) + h(x,y,z)$, one should first
eliminate $wf(x,y,z)$ via a change of variable.

\subsection{Rank of the period maps for modular families}
\label{ssec:KernelCode}

\par In this subsection, we present code to compute the kernel of the differentials
of the period maps we consider. The basis of the code is that \textsc{SageMath} has
facilities to reduce polynomials relative to a given ideal and compute the
coefficient  matrix of a sequence of polynomials with respect to the monomials which
occur  in the sequence.  In this way, the problem reduces to a straightforward
linear algebra problem. This code has also been adapted to incorporate the
singularities of types $\mathscr{J}$, $E_{13}$ and $E_{14}$.

\par More precisely, let $R$ be a graded ring and $S$ and $J$ be homogeneous
ideals of $R$ such that $J\subset S$.  Then, the multiplicative structure of $R$
induces a well defined map
\begin{equation}
       (R/J)_{\alpha}\times (S/J)_d \mapsto (S/J)_{d+\alpha}        \label{eq:alg-kernel}
\end{equation}
where $(\text{---})_k$ denotes the degree $k$-component. As above, for types 1--4,
the determination of the kernel of the derivative of the period map at the generic surface
$V(g)$ amounts to the calculation of the kernel of \eqref{eq:alg-kernel} in the special
case where $R$ is the homogeneous coordinate ring of $P(1,2,a,b)$, $S=R$, $J$ is
the Jacobian ideal of $g$, $\alpha=1$ and $d=\deg(g)$.

\par Let $\cM$ denote a moduli space of surfaces of type $\mathscr{J}$, $E_{13}$, $E_{14}$
or $\cM_{a,b}$.  Let $V(g)$ be a generic element of $\cM$, with Jacobian ideal
$J\subseteq R$. To produce code which handles all of these moduli spaces at once, we
observe that in each case there exists a homogeneous ideal $I$ of $R$ such that every
element of the tangent space $\mathscr{T}$ to $\cM$ at $V(g)$ can be obtained by a
first order deformation $t\mapsto V(g+t\zeta)$ for some $\zeta\in I$. We therefore
set $S=I+J$, where $I=R$ for the moduli spaces $\cM_{a,b}$.
 
\par For the moduli spaces $\cM$ under consideration, $I_d$ always has a monomial
basis $B$.  Let
$
      X = \{ m\in B \mid m = m.reduce(J)\}
$
using the reduce command of \textsc{SageMath}.  Let $\tau(X)$ denote the subspace
of $\mathscr{T}$ defined by first order deformation through elements of
$\text{span}(X)$. Then, $\tau(X)=\mathscr{T}$ if and only if
\begin{itemize}
\item[(i)] $|X|=\dim\,\cM$
\item[(ii)] $\dim (\text{span}(X) + J) = |X| + \dim J$
\end{itemize}
This is easily checked by computer using the code found at the end of this
appendix (\ref{ssec:KernelCode}).  In the code, {\it ex\_dim} = $\dim\,\cM$
and the basis $B$ is the complement of the monomials listed in the parameter
{\it forbidden}.  The results of these calculations are
summarized in Tables~\ref{tab:kernel} and \ref{tab:kernel2}.  For types
1-- 4, the code also verifies the previously stated dimensions of
$(R/J)_d$ and $(R/J)_{d+1}$.
 \begin{footnotesize}
\begin{table}[h]
\begin{tabular}{lll}
1  & Defining polynomial          & Generator of Kernel \\
\hline
1.  & $x^{14} + y^7 + yz^4 + w^2 + x^2y^3z^2$  & $x^{12}y + (1/7)y^4z^2$ \\
2.  & $x^{12} + y^6 + z^4 + yw^2 + x^2y^2z^2$  & $x^{10}y + (1/6)y^3z^2$ \\
$\mathscr{J}$  &  $x^2z^4 + yz^4 + x^5z^3 + x^8z^2$ 
               & $x^8y^3 + (4/5)x^6yz^2 + (1/2)x^3yz^3$ \\
               &  $+y^7 + x^{10}y^2 - w^2+ x^2y^3z^2$
               &  $+ (1/5)y^4z^2 + (1/5)yz^4 $   \\
$E_{13}$ & $y^7 +y^2x^{10} + yz^4 + x^5z^3 + z^2x^8 - w^2$ & 
$x^8y^3 + (4/5)x^6yz^2 + (1/2)x^3yz^3$ \\
$E_{14}$ & $y^7 +y^2x^{10} + yz^4 + x^3yz^3 + z^2x^8 - w^2$ &
$x^8y^3 + (4/5)x^6yz^2 + (3/10)xy^2z^3$ 
\end{tabular}    
\caption{\small Examples, Infinitesimal mixed Torelli}
\label{tab:kernel} 
\end{table}

\begin{table}[h]
\begin{tabular}{lll}
1  & Defining polynomial \\
\hline
1. & $x^{14} + x^2z^4 + 2xy^2z^3 + y^7 + y^4z^2 + yz^4 + w^2$ & \\
2. & $x^{12} + y^6 + z^4 + yw^2 + x^2y^2z^2$ & \\
$\mathscr{J}$ & $x^{10}y^2+x^8z^2+2x^6yz^2-2x^5y^3z+x^2z^4$ & \\
              & $-2xy^2z^3+y^7+y^4z^2 + yz^4 + w^2$ & \\
$E_{13}$ & $x^{10}y^2 + x^8z^2 + 2x^5z^3 + 2x^4y^2z^2 + 2xy^2z^3$ & \\
        & $+y^7 + y^4z^2 + yz^4 + w^2$ &    \\
3.  & $x^{16} + y^8 + xz^3 + yw^2 + y^3z^2$ & \\
4.  & $w^2 + xz^3 + y^4z^2+x^{20}y + y^{11}$ & \\
\end{tabular}    
\caption{\small Examples, Infinitesimal Pure Torelli}
  \label{tab:kernel2} 
\end{table}
\end{footnotesize}
 \subsubsection{Quasi-smoothness calculations}
  To check that the specific hypersurfaces used in these calculations are
  quasi-smooth, we use the Groebner basis method of section
  \ref{ssec:groebner}. In each case, we find that some power of $z$
  and $w$ are contained in the Jacobian ideal of $g$ (this can be checked
  directly using the reduce command in \textsc{SageMath}), and hence we must
  have $z=w=0$ at the singular point. In the same way, we verify that the test
  surfaces of types $\mathscr{J}$, $E_{13}$, $E_{14}$ do not have any extra
  singularities.

  \par Since the only new feature arises in the case of types $\mathscr{J}$,
  $E_{13}$ and $E_{14}$, we only treat these cases.  For the examples for which
  the derivative of the period map has a non-trivial kernel, $z^{10}$ and $w$
  belong to the Jacobian ideal.  Moreover, the condition to have a singular
  point at $p$ reduces to $g_x=10x^9y^2=0$ and  $g_y=2x^{10}y+7y^6=0$.
  The only singular point is therefore at $p=[1:0:0:0]$, as expected.

  \subsection*{Code for infinitesimal period map calculations} The dimension
  of moduli space $\cM$ is {\it ex\_dim}.  The basis $B$ of $I_d$ is the
  complement of the monomials listed in {\it forbidden}.  For the moduli
  spaces of types $\mathscr{J}$, $E_{13}$ and $E_{14}$, the code assumes
  that $g=w^2 + h(x,y,z)$. The example for which mixed (infinitesimal) Torelli holds
  were found by perturbation of the basic examples.  The examples for which the
  pure (infinitesimal) Torelli theorem hold were found by generating a random
  element of the moduli space.
  
\begin{footnotesize}
\begin{sageblock}
#a=3; b=7;   ex_dim = 18; #Type 1. Different ex_dim for J, E13, E14 below. 
#a=3; b=5;  ex_dim = 17; #Type 2
#a=5; b=7;  ex_dim = 16; #Type 3
#a=7; b=11; ex_dim = 18  #Type 4
d = a+b+4    
R=PolynomialRing(QQ,"x,y,z,w",order=TermOrder("wdeglex",(1,2,a,b)))
x,y,z,w=R.gens()
# Code assumes g = w^2 + h(x,y,z) in cases J, E_13 and E_14
# Examples with non-zero kernels and infinitesimal mixed Torelli
# g = x^(14) + y^7 + y*z^4 + w^2 + x^2*y^3*z^2 # Type 1
# g = x^(12) + y^6 + z^4 + y*w^2 + x^2*y^2*z^2 # Type  2
# Type J:
# g = x^2*z^4 + y*z^4 + x^5*z^3 +x^8*z^2 + y^7 +x^(10)*y^2 - w^2 + x^2*y^3*z^2
# g = y^7 +y^2*x^(10) + y*z^4 + x^5*z^3 + z^2*x^8 - w^2 #E13
# g = y^7 +y^2*x^(10) + y*z^4 + x^3*y*z^3 + z^2*x^8 - w^2 #E14
# Examples with trivial kernels (i.e. infinitesimal Torelli holds)
# g = x^(14) + x^2*z^4 + 2*x*y^2*z^3 + y^7 + y^4*z^2 + y*z^4 + w^2 # Type 1
# g = x^(12) + x*z^2*w + y^6 + y^2*z*w + y*w^2 + z^4 # Type 2
# g = x^(16) + y^8 + x*z^3 + y*w^2 + y^3*z^2 # Type 3
# g = w^2 + x*z^3 + y^4*z^2+x^(20)*y + y^(11) # Type 4
# Type J:
# A = x^(10)*y^2 + x^8*z^2 + 2*x^6*y*z^2 - 2*x^5*y^3*z + x^2*z^4;
# B = - 2*x*y^2*z^3 + y^7 + y^4*z^2 + y*z^4 + w^2; g = A+B;
# Type E13
# A = x^10*y^2 + x^8*z^2 + 2*x^5*z^3 + 2*x^4*y^2*z^2 + 2*x*y^2*z^3;
# B = y^7 + y^4*z^2 + y*z^4 + w^2; g = A+B;     
# The parameters forbidden, ex_dim:
# forbidden=[]; #For types 1-4
# forbidden=[x^14,x^11*z,x^12*y]; ex_dim=17 #J surface
# forbidden=[x^14,x^11*z,x^12*y,x^2*z^4]; ex_dim=16; #E13 surface
# forbidden=[x^14,x^11*z,x^12*y,x^2*z^4,x^5*z^3]; ex_dim = 15; #E14 surface
Md=[R.monomial(*e) for e in WeightedIntegerVectors(d,(1,2,a,b))]
[Md.remove(m) for m in forbidden]   
J=g.jacobian_ideal()
gx=g.derivative(x); gy=g.derivative(y);
gz=g.derivative(z); gw=g.derivative(w);
Z=Sequence([x*gx,x^2*gy,y*gy]);
Ma=[R.monomial(*e) for e in WeightedIntegerVectors(a,(1,2,a,b))]
[Z.append(m*gz) for m in Ma]
Mb=[R.monomial(*e) for e in WeightedIntegerVectors(b,(1,2,a,b))]
[Z.append(m*gw) for m in Mb]
# Z = Degree d component of J.
W,n=Z.coefficient_matrix(); jd = rank(W);
print("Dimension of J_d = ", jd)
X=Sequence([m for m in Md if m.reduce(J)==m]); L=Set(X)
rx = L.cardinality(); print("Cardinality of X = ",rx);
[Z.append(m) for m in Md if m.reduce(J)==m];
U,n=Z.coefficient_matrix(); ru=U.rank()
print("Dimension of (J_d + span(X)) = ",ru)
# X gives a basis for the tangent space to the deformation space if 
# ru = rx + jd and rx = ex_dim
if ((ru==rx+jd) and (rx==ex_dim)): #This code must be indented.
     print("X is a basis of the tangent space, calculating the kernel dimension.")
     T=Sequence([x^2*gx, y*gx])
     M3 = [R.monomial(*e) for e in WeightedIntegerVectors(3,(1,2,a,b))]
     [T.append(m*gy) for m in M3]
     Ma = [R.monomial(*e) for e in WeightedIntegerVectors(a+1,(1,2,a,b))]
     [T.append(m*gz) for m in Ma]
     Mb = [R.monomial(*e) for e in WeightedIntegerVectors(b+1,(1,2,a,b))]
     [T.append(m*gw) for m in Mb]
     # Degree d+1 component of J.
     D, l = T.coefficient_matrix()
     r2 = D.rank(); print("Dimension of J_{d+1} = ",r2)
     [T.append((x*m)) for m in X]
     D, l = T.coefficient_matrix()
     r3 = D.rank(); print("Dimension of J_{d+1} + Im(span(X)) = ",r3)
     print("Kernel dimension = ",rx+r2-r3)
     if(rx+r2-r3>0):
     # Find the kernel.
         print("Calculating kernel.");
         c = D.ncols(); P = D.submatrix(0,0,r2,c); P1=P.row_space();
         Q = D.submatrix(r2,0,rx,c); Q1=Q.row_space();
         B = P1.intersection(Q1);
         B1 = B.basis_matrix();
         for j in range(B1.nrows()):
             s=[]
             [s.append(B1[j,k]*l[k]) for k in range(c)]
             m=(sum(s))[0]
             t, r = m.quo_rem(x)
             print("t=",t,"| x*t mod J=",(x*t).reduce(J),"| t mod J=|",t.reduce(J))
     if((Set(forbidden)).is_empty()):
         Md= [R.monomial(*e) for e in WeightedIntegerVectors(d,(1,2,a,b))]
         K = Set(Md)
         Md1=[R.monomial(*e) for e in WeightedIntegerVectors(d+1,(1,2,a,b))]
         L = Set(Md1)
         print("dim (R/J)_d = ",K.cardinality()-jd)
         print("dim (R/J)_{d+1} = ",L.cardinality()-r2)
else:
     print("X is not a basis of tangent space, exiting.")
\end{sageblock}
\end{footnotesize}

\subsection{Calculations involving  the $E_{12}$, $E_{13}$ and
$E_{14}$-singularities}\label{sec:OnEn}

\subsubsection{Reduction to  type $(14,[1,2,3,7])$} Recall that the singularity types determine
  data $(p,q,d)$ from the exponents of the occurring monomials $x^ay^bz^b$ via the weight rule 
  \eqref{eqn:weightrule}. For the
   the singularity types
$E_{12}$, $E_{13}$ and $E_{14}$ in~\cite{GPSZ}  these data are 
$$
E_{12}:\hphantom{a}(3,7,21),\qquad E_{13}:\hphantom{a}(2,5,15),\qquad
E_{14}:\hphantom{a}(3,8,24)
$$
As noted in section 3 of~\cite{GPSZ}, the following 19 monomials
$x^ay^bz^c\leftrightarrow(a,b,c)$ have non-negative weight for
$E_{12}$, $E_{13}$ and $E_{14}$:
\begin{gather*}
  (0,0,5),~(0,2,4),~(0,4,3),~(0,6,2),~(0,8,1),~(0,10,0), \\
  (1,3,3),~(1,1,4),~(1,5,2),~(1,7,1),~(1,9,0),~(2,0,4),  \\
  (2,2,3),~(2,4,2),~(2,6,1),~(2,8,0),~(3,1,3),~(3,3,2),~(4,0,3),
\end{gather*}
The monomial $x^3y^5z\leftrightarrow(3,5,1)$ occurs in non-negative weight
for both $E_{12}$ and $E_{13}$.  Finally, the monomial
$x^3y^7\leftrightarrow(3,7,0)$ occurs in non-negative weight only for $E_{12}$.
After multiplying each of the monomials in the previous list  by $z^2$  and converting
to the variables $x_0=y$, $x_1=z$ and $x_2=xz$, the previous list becomes
$x^ay^bz^c \mapsto x_0^bx_1^{c+2-a}x_2^a\leftrightarrow(b,c+2-a,a)$:
\begin{gather*}
  (0,7,0),~(2,6,0),~(4,5,0),~(6,4,0),~(8,3,0),~(10,2,0), \\
  (3,4,1),~(1,5,1),~(5,3,1),~(7,2,1),~(9,1,1),~(0,4,2),  \\
  (2,3,2),~(4,2,2),~(6,1,2),~(8,0,2),~(1,2,3),~(3,1,3),(0,1,4)
\end{gather*}
The remaining $E_{13}$ monomial  $x^3y^5z$ maps to $x^3y^5z^3=x_0^5x_2^3$.  The
monomial $x^3y^7$, which occurs only in $E_{12}$, does not
transform into a degree 14 monomial in $x_0,x_1,x_2$ by this process.  

\par Direct enumeration shows that there are 24 monomials of degree 14 in
$\bP[1,2,3]$.  Thus, there are 4 monomials missing from the list for
$E_{13}$: Since this highest power of $y$ which can appear in degree 10 in
$\bP[1,1,2]$ is 10, it follows that we miss the monomials $x_0^{14}$,
$x_0^{12}x_1$ and $x_0^{11}x_2$.  We also miss the monomial $x_0^2x_2^4=x^4y^2z^4$
since this comes by multiplying $x^4y^2z^2$ by $z^2$, and  $x^4y^2z^2$
has weight $\omega = (2)(2)+(2){(5)}-15=-1$.  

\par In particular, since we don't have the monomial $x_0^{14}$, a curve
$B = V(g)$ arising from the $E_{13}$ or $E_{14}$ singularity will always
pass through the point $(x_0:x_1:x_2)=(1:0:0)$.  Moreover, since we
don't have the monomials $x_0^{12}x_1$ and $x_0^{11}x_2$ it follows that
$\nabla g=0$ at $(1:0:0)$.  

\subsubsection{The   $\mathscr{J}$-locus} 

\begin{proposition} The singular locus of the degree 14 surface 
$V(f)\subset\bP[1,2,3,7]$ defined by the equation
\begin{equation}
  f = x_0^2x_2^4 + x_1x_2^4 + x_0^5x_2^3 + x_0^8x_2^2 + x_1^7
      + x_0^{10}x_1^2 -x_3^2  \label{eq:J-surface-old}
\end{equation}
consists of an $A_1$-singularity at the point $[1:0:0:0]$ and the finite
quotient singularity at the point $[0:0:1:0]$ which $V(f)$ inherits from
$\bP[1,2,3,7]$.  Moreover, since the defining equation of this surface
contains the term $x_0^2x_2^4$, it is not contained in the $E_{13}$ or $E_{14}$
locus.  The associated smooth elliptic surface has fiber structure
$2I_0 + I_2 + 22\times I_1$.
\end{proposition}
\begin{proof} Dividing by $x_0^{14}$ and setting $\zeta = x_1/x_0^2$,
$\nu=x_2/x_0^3$ and $\omega = x_3/x_0^7$ gives
$$
        \omega^2 = (1+\zeta)\nu^4 + \nu^3 + \nu^2 + \zeta^7 + \zeta^2
$$
 and thus one has an   $I_2$ fiber over $(1:0)$.\footnote{The only 
monomials $x_0^ax_1^bx_2^c$ which occur in $\mathscr{J}$ and survive 
evaluation at $(x_0^2:x_1)=(1:0)$ are $x_2^4x_0^2$, $x_2^2x_0^8$ and 
$x_2^3x_0^5$, so the analysis presented here is the generic case.}
        
\par Taking the discriminant of the right hand side of the previous equation with
respect to $\nu$ gives
$$
\aligned
(\zeta^6 &- \zeta^5 + \zeta^4 - \zeta^3 + \zeta^2)
(256\zeta^{18} + 1024\zeta^{17} + 1536\zeta^{16} + 1024\zeta^{15}
+ 256\zeta^{14} \\ &+ 512\zeta^{13} 
+ 2048\zeta^{12} + 3072\zeta^{11}
+ 1920\zeta^{10}+ 272\zeta^9 + 133\zeta^8 + 1013\zeta^7 \\ &+ 1536\zeta^6 
+ 896\zeta^5 + 16\zeta^4 - 123\zeta^3 + 5\zeta^2 + 28\zeta + 12)
\endaligned
$$
Factoring out $\zeta^2$ and taking the resultant of the remaining two
polynomials of degree 4 and 18 gives 999680.  Thus, the discriminant
has 22 simple roots and one double root at $\zeta=0$.  The existence of
$2I_0$ fiber is the same as the generic surface of type $(14,[1,2,3,7])$.
The fiber structure is therefore $2I_0 + I_2 + 22\times  I_1 $ as claimed.

\par The analysis of the singular locus for this surface is exactly the same
as the surface presented at the end of Appendix A.   The only difference 
between the surface considered here and the surface presented there is the 
term $x^2y^3z^2$.  The Jacobian ideal contains the monomials $z^{10}$ and 
$w$, and the condition to have singular point at $p$ is $g_x=10x^9y^2$ and
$g_y= 2x^{10}y + 7y^6$.  Therefore $(1:0:0:0)$ is the only singular point of
the surface.
\end{proof}

\par Transferring equation \eqref{eq:J-surface-old} back to $\bP(1,1,2,5)$
by dividing by $z^2$ after setting $x_0=y$, $x_1=z$, $x_2=xz$ and $x_3=zw$
yields the defining equation
$$
      w^2 = f(x,y,z),\qquad 
      f(x,y,z)=x^4y^2z^2 + x^4z^3 + x^3y^5z + z^5 + x^2y^8 + y^{10}
$$
In this case the branch curve $V(f)$ has a singularity of type $J[2,2]$ at the 
 point $(1:0:0)$.  This can be confirmed by the following sage code, which
also shows that this singularity has modality 1 and Milnor number $\mu = 12$. 
(set $x=1$ to work in an affine chart)

\begin{footnotesize}
\begin{sageblock}
r = singular.ring(0,'(y,z)', 'ds')
singular.lib('classify.lib')
h = singular.new('y^2*z^2 + z^3 + y^5z + z^5 + y^8 + y^(10)')
print(singular.eval('classify({})'.format(h.name())))
\end{sageblock}
\end{footnotesize}

\begin{remark}As shown above, $\mathscr{J}$ has dimension 17, which matches
the dimension formula $29-\mu$ of the other modality 1 singularities
considered above.
\end{remark}

\subsubsection{The case $E_{13}$} A typical $E_{13}$ branch curve is defined
by the equation
$$
     f= z^5 + y^{10} + x^4z^3 + x^3y^5z + x^2y^8
$$
which becomes
\begin{equation}
  g = x_1^7 + x_1^2x_0^{10} + x_1x_2^4 + x_2^3x_0^5 + x_2^2x_0^8
  \label{eq:E13-typical}
\end{equation}
To analyze the singularity at the point $(x_0:x_1:x_2)=(1:0:0)$, we
divide by $x_0^{14}$ and introduce the new variables $\zeta = x_1/x_0^2$
and $\nu=x_2/x_0^3$ to obtain
\begin{equation}
  \zeta^7 + \zeta^2 + \nu^4\zeta + \nu^3 + \nu^2   \label{eq:E13-branch}
\end{equation}
The lowest order term here is $\zeta^2 + \nu^2$, which produces an $A_1$
surface singularity at $(1:0:0:0)$.  As shown at the end of Appendix $A$,
this surface has no other singularities.

 \par To continue the analysis of the birational models of the $E_{13}$ surfaces
as degree 14 hypersurfaces in $\bP[1,2,3,7]$, we consider the fibration
to $\bP^1$ given by $(x_0:x_1:x_2:x_3)\mapsto (x_0^2:x_1)$.  By equation
\eqref{eq:E13-branch} the fiber over the point $(x_0^2:x_1)=(1:\zeta)$ is
given by 
$ 
    \nu^4\zeta + \nu^3 + \nu^2  + \zeta^7 + \zeta^2 = \omega^2 
$ 
where $\zeta=x_1/x_0^2$, $\nu=x_2/x_0^3$ and $\omega=x_3/x_0^7$ in the affine
chart $x_0\neq 0$ of $\bP[1,2,3,7]$.  The fiber over $\zeta=0$, is the
nodal cubic\footnote{The only monomials $x_0^ax_1^bx_2^c$ which occur in $E_{13}$
and survive evaluation at $(x_0^2:x_1)=(1:0)$ are $x_2^2x_0^8$ and $x_2^3x_0^5$, so
the analysis presented here is the generic case.}
$ 
    \omega^2 = \nu^2(\nu+1)
$ 
whose singularity at the point $(0,0)$ in the $(\nu,\omega)$-plane coincides
with
the $A_1$-singularity at the point $(1:0:0:0)$ on the surface.  Resolving this
singularity, we obtain an $I_2$-fiber.  Just like the generic degree 14 surface in
$\bP[1,2,3,7]$, the fiber over $(0:1)$ is of type $2I_0$.  To finish the
analysis of the fibers of $\pi$, we calculate the discriminant $D$ of the
polynomial \eqref{eq:E13-branch} with respect to $\nu$:
$$
D = \zeta^2 (\zeta^5 + 1) (256 \zeta^{17} + 512 \zeta^{12} - 128 \zeta^9 + 144 \zeta^8 + 229 \zeta^7 -
128 \zeta^4 + 144 \zeta^3 - 27 \zeta^2 + 16 \zeta - 4)
$$
The discriminant of the degree 17 factor of $D$ with respect to the
variable $\zeta$ is an 81 digit integer.  Thus, $\pi$ also has 22
 $I_1$ fibers.   
 
\subsubsection{The case  $E_{14}$}
The analysis of this case  is similar.  A typical $E_{14}$ branch curve is 
$ 
       f = z^5 + y^{10} + x^4z^3 + x^2y^8 
$  which becomes
\begin{equation}
      g = x_1^7 + x_1^2x_0^{10} +x_1x_2^4 + x_2^2x_0^8  \label{eq:E14-branch} 
\end{equation}
Setting $x_0=1$ and letting $\zeta = x_1/x_0^2$ and $\nu=x_2/x_0^3$ as above,
this becomes
\begin{equation}
      \zeta^7 + \zeta^2+ \zeta\nu^4 + \nu^2       \label{eq:E14-affine}
\end{equation}
so again we have an $A_1$ surface singularity at $(1:0:0:0)$.
 
 \par To determine the elliptic fibration structure, we compute the
discriminant $D$ of
\begin{equation}
  \zeta^7 + \zeta^2+ \zeta\nu^4 + \zeta\nu^3 + \nu^2
  \label{eq:E14-affine-2}
\end{equation}
which yields
$$
\zeta^3(\zeta^5 + 1) (256 \zeta^{16} + 512 \zeta^{11} - 27 \zeta^{10} + 144 \zeta^9 - 128 \zeta^8
+ 256 \zeta^6 - 27 \zeta^5 + 144 \zeta^4 - 128 \zeta^3 - 4 \zeta + 16)
$$
The discriminant of the degree 16 factor of $D$ is a 77 digit integer, and
hence this factor has no multiple roots.   The resultant of the degree 5
and 16 factors is 1049600.  We also retain the $2I_0$ fiber over
$(x_0^2:x_1)=(0:1)$.  Thus, the fibration structure of this surface is 
$2I_0 + I_3 + 21\times I_1$.   
Observe that  for the generic  $E_{14}$-surface
 the affine form of the fiber at $\zeta=0$ is $\nu^2-\omega^2=0$ 
 and so gives $2$ smooth rational curves;  the $A_1$-singularity contributes another 
rational component, confirming the $I_3$-structure at $\zeta=0$ for the generic  $E_{14}$-surface.
Thus, also  the generic $E_{14}$-surface has fiber type $2I_0 + I_3 + 21\times I_1$.

\begin{remark}Let $S\subset\bP[1,1,2,5]$ be the surface of type
$E_{12}$ defined by the equation 
$ 
             x^4z^3 + x^3y^7 -xy^9 + y^{10} + z^5-w^2=0
$ 
and $\pi:\tilde S\to\bP^1$ be the elliptic surface obtained by
resolving the indeterminacies of the map $(x:y:z:w)\mapsto(y^2:z)$.  
Then, $\pi^{-1}(1:\lambda)$ is
$ 
     \lambda^3X^4 + X^3-X + (1 + \lambda^5) = W^2
$, 
where $X=x/y$ and $W=w/y^5$.  The discriminant of the left hand side of this
equation is a degree 24 polynomial without multiple roots, and the fiber over
$(1:0)$ is an irreducible elliptic curve.
\end{remark}

\subsection{Code for ~\ref{app:nijgh}} \label{a} 
 
The polynomial $F$ of Equation~\ref{equation} is defined as follows.
 
\begin{footnotesize}
 \begin{verbatim}
WP.<x0,x1,x2,x3> = PolynomialRing(ZZ)
G  = x0*x2^2 + x0^4*x2 + 3*x1^2*x2
G0 = -1
G3 = x0^6 + 2*x0^4*x1 + x0^2*x1^2 + 2*x1^3 
G4 = 4*x0^6*x1+2*x0^4*x1^2 + x0^2*x1^3 + 4*x1^4
G6 = x0^12 + 3*x0^10*x1 + 3*x0^8*x1^2 + x0^4*x1^4 + 3*x0^2*x1^5 + x1^6
F = x1*x3^2 + G*x3 + G0*x2^4 + G3*x2^2 + G4*x0*x2 + G6
\end{verbatim}
 \end{footnotesize}
 For practical reasons, we introduced here instead $G=G_{1,2}\cdot x_1^2 + G_2\cdot x_0$.
 
\subsubsection{Checking quasi-smoothness} \label{a.qs}

Next, we choose $p\in\{2,3\}$ and check if the surface $X_p$ over $\bbF_p$ is quasi-smooth. 
This is done by checking if the radical of the ideal generated by $F$ and its derivatives is equal to the irrelevant ideal.
The outcome of this check is `true', which shows that the surface $X_p$ is quasi-smooth.

\begin{footnotesize}
\begin{sageblock}
p = 2    # (or p = 3)
Wp.<xp0,xp1,xp2,xp3> = PolynomialRing(GF(p))
f = F(xp0,xp1,xp2,xp3)
f0 = f.derivative(xp0)
f1 = f.derivative(xp1)
f2 = f.derivative(xp2)
f3 = f.derivative(xp3)
I = Ideal([f,f0,f1,f2,f3])
RI = I.radical()
RI == Ideal([xp0,xp1,xp2,xp3])
\end{sageblock}
\end{footnotesize}

\subsubsection{Checks for the arithmetic surface} \label{a.surf}
Here we give the code that checks if the arithmetic surface~$\mathcal{C}$ is smooth. 
This is done on the two affine parts of the surface. 
In both cases we check that the defining equation together with its derivatives generate the whole ring.
The outcome of these checks are `true', which shows that the surface is smooth.

\begin{footnotesize}
\begin{sageblock}
CF = x1*F(x0,x1,x2,x3/x1)
R.<t,x,y> = PolynomialRing(GF(p))
CF1 = CF(1,t,x,y)
Ft1 = CF1.derivative(t)
Fx1 = CF1.derivative(x)
Fy1 = CF1.derivative(y)
I = Ideal([CF1,Ft1,Fx1,Fy1])
I == (1)
\end{sageblock}
\end{footnotesize}

\begin{footnotesize}
\begin{sageblock}
CF2 = x^4*CF(1,t,1/x,y/x^2)
Ft2 = CF2.derivative(t)
Fx2 = CF2.derivative(x)
Fy2 = CF2.derivative(y)
I = Ideal([CF2,Ft2,Fx2,Fy2])
I == (1)
\end{sageblock}
\end{footnotesize}

\subsubsection{Calculating the discriminant} \label{a.disc}

In the next part of the code, we calculate the discriminant of the defining polynomial $F'$ of the arithmetic surface $\mathcal{C}$. 
To calculate this discriminant, we first calculate it over $\bZ[t]$, which is named \verb+pDisc+ in the code.
The outcome of this part of code gives the factorization of this polynomial modulo $p$, which is the polynomial~$\Delta_p$ given in Lemma~\ref{lemdisc}. 
    
\begin{footnotesize}
\begin{sageblock}
Fd.<tt> = FunctionField(QQ)
Rd.<xd> = PolynomialRing(Fd)
fd = F(1,Fd.0,Rd.0,0)*Rd.0
hd = G(1,Fd.0,Rd.0,0)
pDisc = 4^(-4)*discriminant(hd^2-4*fd)
R.<t> = PolynomialRing(GF(p))
Disc = R(pDisc.numerator())
Disc.factor()
\end{sageblock}
\end{footnotesize}

\subsubsection{Counting the points}\label{a.count}

The following part of the code counts the points on the surface $X_p'$ following the method described in the proof of Proposition~\ref{pts}. For $1\leq n\leq 9$, we count the~$\bbF_{p^n}$-points of~$X_p'$ at once and save the number we find in the list with the name \verb+Count+. 
 
\begin{footnotesize}
\begin{sageblock}
Count = []
for i in range(1, 10):
     q = p^i
    Fq = GF(q)
    A2 = AffineSpace(2, Fq)
    R.<t> = PolynomialRing(Fq)
   # count points above (0:1)
    f = -F(0,1,R.0,0)
    h = G(0,1,R.0,0)
    C = HyperellipticCurve(f,h)
    count = C.cardinality()
     # count points above (1:a)
    for a in Fq: 
        if Disc(a) == 0:
            g = t^2+t-a
            r = Set(g.roots()).cardinality()
            f = CF(1,a,A2.0,A2.1)
            C = Curve(f,A2)
            count = count + C.count_points(1)[0] + r
        else:
            f = -F(1,a,R.0,0)*a
            h = G(1,a,R.0,0)
            C = HyperellipticCurve(f,h)
            count = count + C.cardinality()
   Count.append(count)
# print total number of Fq-points for q=p^i with 0<i<10:
Count
\end{sageblock}
\end{footnotesize}

\begin{remark}
    The code for counting the points takes a lot of time when $p=3$ (roughly 18 hours on Mac OS with Apple M1 processor and 8GB RAM). For finding the surfaces, we used other software, namely \textsc{Magma} (see \cite{magma}). This software is faster and made it easier to search through many surfaces over~$\bbF_2$ and~$\bbF_3$ respectively, to find those that satisfied the required conditions.
\end{remark}

\subsubsection{Computing the characteristic polynomial}\label{a.pol}

The last part of code is used for the proof of Proposition~\ref{frob}. First, we compute the coefficients $c_i$.

\begin{footnotesize}
\begin{sageblock}
# Calculate the values of the trace
Tr = []
for i in range(1, 10):
    Tr.append((Count[i-i] - 1 - p^(2*i)-3*p^i)/(p^i))
# Calculate the values of the coefficients
coef = [1,-Tr[0]]
for i in range(1, 9):
    sum = 0
    for j in range(1,i+1):
        sum = sum + Tr[i-j]*coef[j]
    coef.append(-(Tr[i]+sum)/(i+1))
# Print the coefficients; N.b. first value is c0, not c1:
coef
\end{sageblock}
\end{footnotesize}

In the next part of the code, we define both possible polynomials using the functional equation. 

\begin{footnotesize}
\begin{sageblock}
R.<t>=PolynomialRing(QQ)
# Defining polynomial with positive sign of func eq
coefp = [0] * 20
for i in range (0,10):
    coefp[i] = coef[i]
    coefp[19-i] = coef[i]
wp = R(coefp)
# Defining polynomial with negative sign of func eq
coefn = [0] * 20
for i in range (0,10):
    coefn[i] = -coef[i]
    coefn[19-i] = coef[i]
wn = R(coefn)
\end{sageblock}
\end{footnotesize}

To exclude the polynomial where the sign is positive, we can print the absolute values of the roots by using the following line of code.
\begin{footnotesize}
\begin{sageblock}
for root, _ in wp.roots(CC): print(abs(root))
\end{sageblock}
\end{footnotesize}

The outcome will give a list of absolute values of the roots, of which four are not equal to 1.
As a sanity check, by using the same line of code with \verb+wn+ instead of \verb+wp+, we can see that all the roots indeed have absolute value 1.

Lastly, we factored the polynomial by using the function \verb+wn.factor()+. This gives us a factor $t-1$ and the other factor is the irreducible polynomials $h_p$ of degree 18, which is given in Proposition~\ref{frob}.

\subsubsection{A similar case for degree 14} \label{a.casea}

Here we give the code from which one can deduce that for a general choice of a quasi-smooth surface~$X$ of degree~14 in~$\bP_k(1,2,3,7)$, a minimal desingularization~$X'$ has Picard rank~$2$.
In the below code quasi-smoothness is omitted, but it can be checked with the exact same code as in~\ref{a.qs}. Also the verification that the model we use is correct, is omitted.  
We highlight the differences in the code with the degree 12 case.

\begin{footnotesize}
\begin{sageblock}
# Defining polynomial
WP.<x0,x1,x2,x3> = PolynomialRing(ZZ)
G = x0*x2^2 + x0^4*x2 + x1^2*x2
G2 = x0^2*x1
G5 = x0^8*x1 + x0^2*x1^4 + x1^5
G7 = x0^14 + x0^12*x1 + x0^10*x1^2 + x0^6*x1^4 + x0^2*x1^6 + x1^7
F = x1*x2^4 + x3^2 + G*x3 + G2*x0*x2^3 + G5*x0*x2 + G7
# Calculating the discriminant
p = 2
Fd.<tt> = FunctionField(QQ)
Rd.<xd> = PolynomialRing(Fd)
fd = F(1,Fd.0,Rd.0,0)
hd = G(1,Fd.0,Rd.0,0)
pDisc = 4^(-4)*discriminant(hd^2-4*fd)
R.<t> = PolynomialRing(GF(p))
Disc = R(pDisc.numerator())
# Counting the points
Count = []
for i in range(1, 11): 
# Note that we now count one more extension, 
# also the code below is adjusted accordingly.
    q = p^i
    Fq = GF(q)
    A2 = AffineSpace(2, Fq)
    R.<t> = PolynomialRing(Fq)
    f = -F(0,1,R.0,0)
    h = G(0,1,R.0,0)
    C = HyperellipticCurve(f,h)
    count = C.cardinality()
    for a in Fq: 
        if Disc(a) == 0:
            g = t^2+t+a #sign change, although not necessary
            r = Set(g.roots()).cardinality()
            f = F(1,a,A2.0,A2.1) #changed defining polynomial
            C = Curve(f,A2)
            count = count + C.count_points(1)[0] + r
        else:
            f = -F(1,a,R.0,0) #changed defining polynomial
            h = G(1,a,R.0,0)
            C = HyperellipticCurve(f,h)
            count = count + C.cardinality()
    Count.append(count)
# Calculating the traces
Tr = []
for i in range(1, 11):
    Tr.append((Count[i-1] - 1 - p^(2*i)-2*p^i)/(p^i)) 
# Note the slight change in the formula for the trace,
# because we now know that there is a 2-dim subspace
# on which Frobenius is acting trivial and not 3-dim.
 # Calculating the coefficients
coef = [1,-Tr[0]]
for i in range(1, 10):
    sum = 0
    for j in range(1,i+1):
        sum = sum + Tr[i-j]*coef[j]
    coef.append(-(Tr[i]+sum)/(i+1))
coef[10]
# Because the coefficient c10 is non-zero, 
# the functional equation gives us that the 
# other coefficients are positive as well.
# Calculating the characteristic polynomial of Frobenius
R.<t>=PolynomialRing(QQ)
for i in range (0,10):
    coef.append(coef[9-i])
wp = R(coef)
wp.factor()
\end{sageblock}
\end{footnotesize}

The outcome of the code gives us   an irreducible polynomial $h$ with
$$h:= \tfrac{1}{2}(2t^{20} - 2t^{18} + t^{16} - t^{14} + t^{13} + t^{12} - t^{11} - t^{10} - t^9 + t^8 + t^7 - t^6 + t^4 - 2t^2 + 2),$$
which has no roots of unity as zeros. 
We deduce that the characteristic polynomial of Frobenius acting on $H^2_\text{\'et}((X_2')_{\overline{\bbF}_2},\bQ_\ell(1))$ equals $(t-1)^2\cdot h$. 
We conclude that for any minimal desingularization of a quasi-smooth surface $X$ of degree 14 in $\bP_\bQ(1, 2, 3, 7)$, 
for which the reduction at the prime $2$ is isomorphic to $X_2$, we have  $\rho(X')=\rho(X'_{\overline{\bQ}})=2.$

\section{Normal forms: proofs}

\label{sec:NormFormsBis}

We give indications of the proof of Proposition~\ref{prop:GIT} concerning normal forms
of quasi-smooth  hypersurfaces $(F=0)$  in $\bP(1,2,a,b)$ of  degree $d=a+b+4$.
 Note that in case $(a,b)=(3,7)$ and $(a,b)=(7,11)$ one has $d=2c$ which means that
 the surface is a double cover of $\bP(1,2,a)$ branched  in a degree $d$ quasi-smooth curve $C$.
 It then suffices to  write a normal form  for   the polynomial  $F_C$ defining  $C$ and then $F=F_C-x_3^2$.  
 This deals with 2 cases:

\begin{lemma} \label{lem:37AND27} (1) If $(a,b)=(3,7)$ then, via the automorphism group of
$\bP(1,2,3)$, the polynomial  $F_C$ can be put in the form
\begin{equation}
     F_C=  x_1   x_2^4 + G_0  x_0^5  x_2^3 + G_4(x_0^2,x_1) x_2^2  
      + x_0 G_5(x_0^2, x_1)x_2  + G_7( x_0^2, x_1)
      \label{eq:nf-123}
\end{equation}
where $G_j$ is an \emph{ordinary} polynomial of degree $j$ in two variables.  The
subgroup of $\text{\rm Aut}\,\bP(1,2,3)$
 preserving  a normal form of the type  \eqref{eq:nf-123} consists of transformations
of the form $x_j\mapsto c_jx_j$ with $c_j\in\bC^*$ and 
$c_2^4 c_1=1$. 
\\
(2) 
If  $(a,b)=(7,11)$  then,  provided the coefficient of $x_1^4x_2^2$ is non-zero, via the automorphism group   of $\bP[1,2,7]$, the polynomial  $F_C$ can be put in the form
\begin{equation}
      F_C=x_0  x_2^3  + G_0 x_1^4  x_2^2 + x_0 G _7(x_0^2,x_1) x_2 + G_{11}(x_0^2,x_1),\quad G_0\not=0,
      \label{eq:nf-127}
\end{equation}
where $G_j$ is an \emph{ordinary} polynomial of degree j in two variables,
and the coefficient of $x_0^{22}$ in  $G_{11}(x_0^2,x_1)$  is zero.
The subgroup of $\text{\rm Aut}\,\bP(1,2,7)$
 preserving  a normal form of the type  \eqref{eq:nf-127} consists of transformations
of the form $x_j\mapsto c_jx_j$ with $c_j\in\bC^*$ and $c_0c_3^2=1$.

In both cases  the stabilizer of $F_C$ is generically the identity.
\end{lemma}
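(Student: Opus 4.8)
The plan is to show that for $F_C$ generic in the normal form \eqref{eq:nf-123}, resp.\ \eqref{eq:nf-127}, the only automorphism of $\bP(1,2,a)$ fixing the curve $\{F_C=0\}$ is the identity. The first part of the lemma already identifies the subgroup of $\operatorname{Aut}\bP(1,2,a)$ preserving the shape of the normal form with the two-dimensional torus $T$ of diagonal substitutions $x_j\mapsto c_jx_j$ subject to the one recorded multiplicative relation; combined with the canonicity of the normal form modulo $T$ produced in that proof, this forces the stabilizer of a generic normal-form $F_C$ to lie in $T$. So it remains to show that $T$ acts on the space of normal-form polynomials with trivial stabilizer at a generic point.

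For this one notes that $T$ acts on $\bP^N$ by rescaling the coefficient of each monomial $x^\alpha=x_0^{\alpha_0}x_1^{\alpha_1}x_2^{\alpha_2}$ by $c_0^{\alpha_0}c_1^{\alpha_1}c_2^{\alpha_2}$; since all monomials of the normal form share the weighted degree $d$, an element of $T$ fixes $\{F_C=0\}$ --- when all these coefficients are non-zero, a non-empty Zariski-open condition --- exactly when it kills every ratio $c^{\alpha-\beta}$, with $x^\alpha,x^\beta$ monomials of the normal form. These $c^{\alpha-\beta}$ are well-defined characters of $T$, and additively the vectors $\alpha-\beta$ lie in the character lattice $X^*(T)=\{(m_0,m_1,m_2)\in\bZ^3:m_0+2m_1+am_2=0\}$, a lattice of rank $2$. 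Hence the generic stabilizer in $T$ is trivial exactly when the differences $\alpha-\beta$ generate $X^*(T)$.

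It therefore suffices to exhibit two such differences that form a $\bZ$-basis. In case (1) the monomials $x_0x_1^5x_2,\ x_1^4x_2^2,\ x_0^5x_2^3,\ x_0^8x_2^2$ all occur in \eqref{eq:nf-123}, and $(1,5,1)-(0,4,2)=(1,1,-1)$ and $(5,0,3)-(8,0,2)=(-3,0,1)$ are immediately seen to be a $\bZ$-basis of $\{m_0+2m_1+3m_2=0\}$. In case (2) the monomials $x_1^4x_2^2,\ x_0x_2^3,\ x_0^{18}x_1^2,\ x_0^{20}x_1$ occur in \eqref{eq:nf-127}, and $(0,4,2)-(1,0,3)=(-1,4,-1)$ together with $(18,2,0)-(20,1,0)=(-2,1,0)$ form a $\bZ$-basis of $\{m_0+2m_1+7m_2=0\}$. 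The locus of $F_C$ with trivial stabilizer being Zariski-open by upper semicontinuity and non-empty by the above, the generic member has trivial stabilizer.

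The step I expect to be the main obstacle is the reduction in the first paragraph, namely that the stabilizer contains no non-diagonal automorphism. This rests on the description of the unipotent radical of $\operatorname{Aut}\bP(1,2,a)$ and on the essential uniqueness of the normal form established in proving the first part of the lemma; without those one would have to exclude stray non-diagonal symmetries of a generic normal-form equation by hand.
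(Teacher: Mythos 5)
Your proposal proves only the final sentence of the lemma. The substance of the statement --- that via $\operatorname{Aut}\bP(1,2,a)$ every quasi-smooth $F_C$ can actually be brought to the form \eqref{eq:nf-123}, resp.\ \eqref{eq:nf-127}, and that the subgroup preserving that shape is exactly the diagonal torus with the stated relation --- is assumed rather than proved: you invoke ``the first part of the lemma'' and ``the canonicity of the normal form modulo $T$'' as given, and you yourself flag that excluding non-diagonal symmetries rests on those unproved claims. But that is precisely what the paper's proof of this lemma consists of: quasi-smoothness at the coordinate singular point forces the coefficient of $x_1x_2^4$ (resp.\ of $x_0x_2^3$, together with the hypothesis on $x_1^4x_2^2$) to be non-zero; successive substitutions $x_1\mapsto a_1x_1+a_2x_0^2$ and $x_2\mapsto a_3x_2+\cdots$ (the unipotent part of $\operatorname{Aut}\bP(1,2,a)$) then remove the unwanted terms (the $x_1$-divisible part of the $x_2^3$-coefficient in case (1); the $x_0^2$-divisible part of the $x_2^2$-coefficient and the monomial $x_0^{22}$ in case (2)); and keeping track of which parameters are thereby used up is what shows that only the diagonal substitutions with $c_1c_2^4=1$, resp.\ $c_0c_2^3=1$, survive. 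Without this reduction your text is a proof of a conditional statement, so relative to the lemma as stated there is a genuine gap.

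For the part you do treat, the character-lattice argument is correct and in fact more explicit than the paper's one-line justification (``consider the relations imposed on the coefficients''). Two small remarks. First, the identification $X^*(T)=\{(m_0,m_1,m_2):m_0+2m_1+am_2=0\}$ is only valid after passing to automorphisms of $\bP(1,2,a)$, i.e.\ modulo the $1$-subtorus for the weights: inside the concrete torus $\{c_1c_2^4=1\}\subset(\bC^\times)^3$ the polynomial $F_C$ is generically fixed by the cyclic group $\{(\lambda,\lambda^2,\lambda^a):\lambda^{d}=1\}$ ($d=14$, resp.\ $22$), which however acts trivially on $\bP(1,2,a)$, so your conclusion about the stabilizer as a group of automorphisms is unaffected. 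Second, your chosen monomials do occur in the normal forms with generically non-zero coefficients, and the two exhibited differences are indeed $\bZ$-bases of the degree-zero lattices (the $2\times2$ minors have gcd $1$), so the generic-triviality claim itself is sound once the first two assertions of the lemma are in place.
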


\begin{proof} (1).
Since $3$ is not a divisor of 14, every degree 14 curve in
$\bP(1,2,3)$ will pass through the singular point $[0,0,1]$ of
$\bP(1,2,3)$.  Thus, to be quasi-smooth the coefficient of $x_2^4x_1$
in $F_C$ has  to be non-zero, otherwise $\nabla F_C=0$ at $[0,0,1]$.  Accordingly, we
can write $
F_C  = x_2^4 P_2 + x_2^3 P_5 + x_2^2 P_8 + x_2 P_{11}  + P_{14}
$, 
where $P_j=P_j(x_0,x_1)$ is homogeneous of weighted  degree $j$ and 
$P_2(x_0, x_1) = \alpha_1 x_1 + \alpha_2 x_0^2$ with $\alpha_1\neq 0$.

\par The automorphism group of $\bP(1,2,3)$ consists of invertible
transformations of the form
\begin{equation}
      [x_0,x_1,x_2] \mapsto
      [a_0 x_0,a_1 x_1 + a_2 x_0^2, a_3 x_2 + a_4 x_0 x_1 + a_5 x_0^3]
\label{eq:auto-123}      
\end{equation}      
In particular, via the transformation
$[x_0,x_1,x_2]\mapsto [x_0,P_2(x_0,x_1),x_2]$
we can reduce the defining equation of $C$ to the form:
\begin{equation}        
  x_2^4x_1 + x_2^3 P_5 + x_2^2 P_8 + x_2 P_{11}  + P_{14}
  \label{eq:123-1}
\end{equation}
Next, we observe that
$ 
    P_5(x_0,x_1) = x_0(b_0 x_1^2 + b_1 x_1x_0^2+ b_2 x_0^4)
    = x_1(b_0 x_1x_0 + b_1 x_0^3) + b_2x _0^5.
$     
Therefore, setting $G_0=b_2$ and using the transformation
$$
       [x_0,x_1,x_2]\mapsto[x_0,x_1,x_2-\frac 14(b_0x_1x_0 - b_1x_0^3) ]
$$
we can reduce the defining of $C$ to the form
\begin{equation}
     x_2^4 x_1 + G_0 x_2^3x_0^5 + x_2^2 P_8 + x_2 P_{11}  + P_{14}
     \label{eq:123-2}
\end{equation}
To obtain the normal form \eqref{eq:nf-123}, we now observe that since
$x_0$ has degree 1 while $x_2$ has degree 2, we can write $P_8=G_4(x_0^2,x_1)$,
$P_{11}=x_0 G_5(x_0^2,x_1)$ and $P_{14}= G_7(x_0^2,x_1)$ where now the $G_j$ are ordinary polynomials of degree $j$.

\par To finish the proof of (1), we note that  the   given set of automorphisms clearly
act on the normal form \eqref{eq:nf-123}.  On the other hand, to obtain the
reduction \eqref{eq:123-1} we must use a combination of transformations of
the form $x_1\mapsto a_1x_1 + a_2x_0^2$ and $x_2\mapsto a_3x_2$.  This fixes
$a_2$ and the product $a_1a_3$.  Likewise, the reduction \eqref{eq:123-2}
fixes the coefficients $a_4$ and $a_5$.
\\
(2) This is a bit more involved. As in case (1) we write
$F_C=
      x_2^3x_0 + x_2^2 P_8 + x_2 P_{15} + P_{22}
$ 
where $P_j=P_j(x_0,x_1)$ is weighted homogeneous of degree $j$ and rewrite this equation
as
$ 
    F_C  = x_2^3x_0 + x_2^2 G_4(x_0^2,x_1) + x_2x_0G_7(x_0^2,x_1) + G_{11}(x_0^2,x_1)
$ in terms of ordinary degree $j$ polynomials $G_j$ in $x_0^2$ and $x_1$.
  Since the coefficient $b_4$ in 
$ 
         G_4(x_0^2,x_1) = b_0x _0^8 + b_1x_0^6x_1 + b_2x_0^4x_1^2
                         +b_3x_0^2x_1^3 + b_4x_1^4
$  is non-zero, using using an automorphism of
$\bP(1,2,7)$ of the form  $x_1\mapsto x_1 + \beta x_0^2$, we may assume 
that the coefficient of $x_0^8$ equals  $3\lambda $, where  $\lambda^3$ is  the coefficient of $x_0^{22}$.  
 In this way, we obtain 
\[
F_C = x_2^3x_0 + x_2^2(3\lambda  x_0^8 + x_0^2x_1q_2(x_0^2,x_1) + G_0x_1^4)
              + x_2x_0q_7(x_0^2,x_1) + G_{11}(x_0^2,x_1)  ,
\]
where $G_0=b_4$.  Next, we consider the automorphism
$$
     x_2\mapsto x_2 - x_0x_1 G_2(x_0^2,x_1)/3 - \lambda x_0^7.
$$
Then, $ x_0x_2^3$ transforms into 
$ x_0x_2^3
        -x_2^2\left(3 \lambda x_0^8 + x_0^2x_1q_2(x_0^2,x_1)\right)
      +x_2(\cdots) - \lambda^3 x_0^{22} + x_1(\cdots) 
$
and $F_C$ becomes
$$
F_C = x_0 x_2^3 + G_0   x_1^4 x_2^2
              +  x_0 x_2  G_7(x_0^2,x_1) + G_{11}(x_0^2,x_1),       
$$ 
where now the coefficient of $x_0^{22}$ is zero.
\par Finally, the given transformations preserve the normal form, and
unipotent mixing of the variables destroys the given normal form.
\par
The last assertion follows by considering the relations imposed on the coefficients of $F_C$ if $(c_0,c_1,c_2)\in (\bC^*)^3$
fixes each of them.
\end{proof}

It is clear that the statement of Lemma~\ref{lem:37AND27} implies Proposition~\ref{prop:GIT}, parts (a) and (d).
 
 \par Now we consider the two cases which are not double covers. The next lemma implies 
Proposition~\ref{prop:GIT}, parts (b) and (c).
 
\begin{lemma}  (1) In case $(a,b)= (3,5)$ via the automorphism group of
$\bP(1,2,3,5)$, the defining equation of $F$ can be put in the form
\begin{align}
     F &=  x_1  x_3^2+ x_0 G_2(x_0^3,x_2) x_3  + G_0 x_2^4
   +  \nonumber \\
   & \hspace{3em} G_3(x_0^2, x_1)x_2^2 + G _4(x_0^2,x_1)x_0x _2 + G_6(x_0^2,x_1) , \quad G_0\not=0,
      \label{eq:nf-1235}
\end{align}
where $G_j$ is an ordinary polynomial of degree $j$  in two variables.
The subgroup of $\text{\rm Aut}\,\bP(1,2,3,5)$
preserving  a normal form of the type   \eqref{eq:nf-1235} consists of transformations
of the form $x_j\mapsto c_jx_j$ for $c_j\in\bC^{*}$ with
$c_1c_3^2=1$.   
\\
(2) In case $(a,b)=(5,7)$   via the automorphism group of
$\bP(1,2,5,7)$, the defining equation of $F$ can be put in the form
\begin{equation}
      F=   x_1 x_3^2 + x_0^2 G_1(x_0^5,x_2) x_3
            + r_0 x_0 x_2^3  + G_0 x_1^3  x_2^2
                         + x_0 x_2 G_5(x_0^2,x_1) + G_8(x_0^2,x_1) ,
      \label{eq:nf-1257}
\end{equation}
where $G_j$ is an ordinary polynomial of degree $j$  in two variables and
$r_0$ is a non-zero constant. The subgroup of
$\text{\rm Aut}\,\bP(1,2,5,7)$ acting on the normal form
\eqref{eq:nf-1257} consists of transformations
of the form $x_j\mapsto c_jx_j$ for $c_j\in\bC^*$ with
$c_1c_3^2=1$.   
\par
In both cases  the stabilizer of $F $ is generically the identity.
\end{lemma}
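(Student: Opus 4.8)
The plan is to mirror the proof of Lemma~\ref{lem:37AND27}, treating $F$ as a quadratic polynomial in the top-weight variable $x_3$: write $F = A\,x_3^2 + B\,x_3 + C$ with $A,B,C\in\bC[x_0,x_1,x_2]$ weighted homogeneous of degrees $d-2b$, $d-b$, $d$. First I would normalize the leading coefficient $A$, which has weighted degree $2$ and hence is of the form $\alpha x_1+\beta x_0^2$: inspecting $\nabla F$ at $\mathsf P_3=(0:0:0:1)$ shows that for a quasi-smooth $F$ the only partial derivative that can be nonzero there is $\partial F/\partial x_1$, with value $\alpha$, so quasi-smoothness forces $\alpha\neq0$; then $x_1\mapsto \alpha x_1+\beta x_0^2$ together with a rescaling of $x_3$ brings $F$ to the shape $x_1x_3^2 + Bx_3 + C$.

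Next I would complete the square in $x_3$ via $x_3\mapsto x_3+h$ with $h$ a weighted form of degree $b$, which replaces $B$ by $B+2x_1h$ (and modifies $C$). Since $h\mapsto x_1h$ is a bijection from the weighted forms of degree $b$ onto the forms of degree $d-b$ divisible by $x_1$, this lets me assume $B\in\bC[x_0,x_2]$; a monomial count shows that the monomials of degree $d-b$ in $x_0,x_2$ alone are precisely those occurring in $x_0\,G_2(x_0^3,x_2)$ in case~(1) and in $x_0^4\,G_1(x_0^5,x_2)$ in case~(2). It then remains to normalize $C$: viewed as a polynomial in $x_2$, its relevant leading coefficient is pinned to a nonzero constant by quasi-smoothness at $\mathsf P_2$ (the constant $G_0$ in case~(1); the coefficient $r_0$ of $x_0x_2^3$, together with $G_0$, in case~(2), in agreement with Proposition~\ref{prop:InsAndSIngs}), and I would use $x_2\mapsto x_2+\varepsilon$ with $\varepsilon$ a degree-$a$ form in $x_0,x_1$, and where needed a further $x_1\mapsto x_1+\gamma x_0^2$, to depress this polynomial and clear the forbidden $x_2$-monomials, just as in the bookkeeping of Lemma~\ref{lem:37AND27}(2). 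What is left is the asserted normal form, and the normalizations performed along the way are exactly what force the listed constants to be nonzero.

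The hard part will be that these last two steps interfere: the substitution $x_2\mapsto x_2+\varepsilon$ used to normalize $C$ feeds back, through the part of $B$ that involves $x_2$, and reintroduces $x_1$-divisible monomials into the coefficient of $x_3$, after which re-completing the square perturbs $C$ once more. I would handle this either by iterating---each correction being of strictly higher order in the substitution parameters, so the process converges---or, more cleanly, by observing that the unipotent radical $U$ of $\operatorname{Aut}\bP(1,2,a,b)$ has dimension equal to the number of monomial coefficients that the normal form prescribes to vanish, and that the infinitesimal $U$-action at a quasi-smooth $F$ surjects onto the span of those monomials; the relevant Jacobian is block-triangular with invertible diagonal blocks because $G_0\neq0$ (resp.\ $r_0\neq0$) and because the monomial $x_1x_3^2$ is present, so the $U$-orbit of $F$ does meet the normal-form slice. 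This transversality/termination step is where essentially all of the work lies; the rest is the kind of computation already carried out in Lemma~\ref{lem:37AND27}.

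For the subgroup of $\operatorname{Aut}\bP(1,2,a,b)$ preserving a normal form of the stated type, I would argue first that no nontrivial unipotent substitution can preserve it: $x_1\mapsto x_1+\gamma x_0^2$ with $\gamma\neq0$ revives an $x_0^2x_3^2$ term, $x_3\mapsto x_3+h'$ with $h'$ a nonzero form of degree $b$ revives an $x_1$-divisible monomial in the coefficient of $x_3$, and $x_2\mapsto x_2+\varepsilon'$ with $\varepsilon'$ a nonzero form of degree $a$ revives a forbidden power of $x_2$. Hence the stabilizing subgroup consists of the diagonal substitutions $x_j\mapsto c_jx_j$; requiring that the monomial $x_1x_3^2$, whose coefficient is pinned to $1$, be preserved gives the relation $c_1c_3^2=1$, and no other monomial of the normal form imposes a further constraint. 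Finally, for a generic $F$ in normal form the characters by which this torus scales the coefficients of the monomials actually occurring in $F$ generate its character group, so the stabilizer of $F$ is trivial; this establishes Proposition~\ref{prop:GIT}, parts~(b) and~(c).
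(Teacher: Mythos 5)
Your proposal follows essentially the same route as the paper's own proof: write $F$ as a quadratic in $x_3$, use quasi-smoothness at $\mathsf P_3$ to force the coefficient of $x_1x_3^2$ to be nonzero and at $\mathsf P_2$ to force $G_0\neq 0$ (case (1)) resp.\ $r_0\neq 0$ (case (2)), then apply the unipotent substitutions $x_1\mapsto x_1+\gamma x_0^2$, $x_3\mapsto x_3+h$, $x_2\mapsto x_2+\varepsilon$ to reach the stated shape, and finally identify the preserving subgroup as the diagonal torus with $c_1c_3^2=1$ and check generic triviality of the stabilizer via the characters on the occurring monomials. (Your monomial count $x_0^4G_1(x_0^5,x_2)$ for the $x_3$-coefficient in case (2) is the correct one, agreeing with the paper's proof and with Proposition~\ref{prop:GIT}(c); the $x_0^2$ in the lemma's display is a misprint.)

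The one place you genuinely depart from the paper is the interference between the $x_2$- and $x_3$-substitutions, and here your concern is justified: the paper performs the steps sequentially and does not note that the last substitution can revive forbidden monomials (in case (1) the final $x_3$-shift adds $P_5P_7+x_1P_5^2$ to the constant term, which can reintroduce $x_2^3$-terms; in case (2) the final $x_2$-shift re-creates $x_1$-divisible terms in the $x_3$-coefficient), so on this point your write-up is more careful than the paper's. Of your two remedies, the naive iteration is not justified as stated (over $\bC$ there is no topology in which ``higher order in the parameters'' yields convergence), and infinitesimal surjectivity of the unipotent action at $F$ does not by itself imply that the orbit meets the normal-form slice; but your triangularity observation is the right one and can be made exact: $\gamma$ is determined by the $x_3^2$-coefficient alone, the $x_2$-part of $h$ enters the constant-term conditions through data independent of $\varepsilon$ (in case (2)) or in a triangular way (in case (1)), and the resulting equations for $\varepsilon$ are affine-linear with invertible diagonal part $4G_0$ resp.\ $3r_0x_0$, hence uniquely solvable. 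Two small slips to correct: quasi-smoothness at $\mathsf P_2$ does \emph{not} force $G_0\neq0$ in case (2) (the lemma makes no such claim, and the basic class (c) example has this coefficient zero), and no further shift $x_1\mapsto x_1+\gamma x_0^2$ is needed, or admissible, when normalizing $C$, since it would destroy the already achieved normalization of the $x_3^2$-coefficient.
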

\begin{proof}  (1) The surface $X$ will pass through the singular point $[0,0,0,1]$ and so the
monomial $x_3^2x_1$ must therefore appear with non-zero coefficient in $f$,
otherwise $\nabla F=0$ at $[0,0,0,1]$.  We can therefore write
$ 
 F = x_3^2p_2(x_1,x_0) + x_3 P_7(x_0,x_1,x_2) + P_{12}(x_0,x_1,x_2)
 $,
where $P_2(x_1,x_0)=\alpha_1 x_1 + \alpha_2x_0^2$ with $\alpha_1\neq 0$.
Therefore, using the transformation $x_1\mapsto \alpha_1x_1+\alpha_2x_0^2$
we can reduce the defining equation of $X$ to
\begin{equation*}
  F= x_1  x_3^2+ x_3 P_7(x_0,x_1,x_2) + P_{12}(x_0,x_1,x_2)
 \end{equation*}
(of course, this changes $P_7$ and $P_{12}$ as well). 
\par
We next simplify $P_{12}$.
Note that if  the coefficient  of $x_2^4$ is zero, $X$     passes  through the singular point  $[0,0,1,0]$ of
$\bP[1,2,3,5]$ and $\nabla F=0$ at $[0,0,1,0]$  which violates  the assumption that $X$ be quasi-smooth.
 Thus we can write
$
P_{12}=q_0 x_2^4 + b_3(x_0,x_1) x_2^3 + b_6(x_0,x_1)x_2^2 +b_9(x_0,x_1)x_2
          + b_{12}(x_0,x_1)
$ 
which can be rewritten as   $
     P_{12}=G_0 x_2^4 + G_1(x_0^2,x_1) x_0 x_2^3
           + G_3(x_0^2,x_1)x_2^2 +G_4(x_0^2,x_1)x_0x_2
          + G_6(x_0^2,x_1)$  where each  $G_j$  is an  ordinary polynomials of degree $j$.
Finally, using the transformation $x_2\mapsto x_2 -x_0G_1(x_0^2,x_1)/4G_0$ we
can obtain the simplified form
\[ 
  P_{12} = G_0 x_2^4 + G_3(x_0^2,x_1)x_2^2 +G_4(x_0^2,x_1)x_0x_2
  + G_6(x_0^2,x_1),
\]
 possibly changing $G_3$, $G_4$ and $G_6$.   
 
 The previous transformation of $x_2$ will also have changed $P_7$ which we subsequently simplify as follows.
Using a transformation of the form $x_3\mapsto x_3+P_5(x_0,x_1,x_2)$
we can remove all of the monomials from $P_7$ which are divisible by $x_1$, i.e. $P_7$ becomes  
\[
P_7=   x_0 P_6(x_0,x_2)  ,\quad \deg P_6=6,
\]
since $x_2$ has degree $3$. This  finally brings  $F$  in  the desired form.
\\
(2)
 Using that the surface $X$ passes through
the point $[0,0,0,1]$ we deduce that $F$
 must contain the monomial $x_3^2x_1$ so that  the  defining equation has the form
\[ 
   F= x_3^2P_2(x_0,x_1) + x_3 P_9(x_0,x_1,x_2) + P_{16}(x_0,x_1,x_2),
\]
where $P_2(x_0,x_1) = \alpha_0 x_0^2 + \alpha_1 x_1$ with $\alpha_1\neq 0$.
Therefore, using the transformation $x_1\mapsto \alpha_0x_0^2 + \alpha_1x_1$ we can assume that
\[ 
P_2(x_0,x_1)= x_1.
\]
Next, we use a transformation of the form $x_3\mapsto x_3 + P_7(x_0,x_1,x_2)$
to eliminate all of the terms of $P_9(x_0,x_1,x_2)$ which are divisible by
$x_1$ so that 
\[
 P_9= P_9( x_0 ,x_2) =x_0^4 G_1(x_0^5, x_2) ,
\] 
with $G_1$   an ordinary polynomial of degree 1 in two variables. Note that  have potentially
changed $P_{12}$ which   now will be written as
 \[
 P_{12}=G_{12} (x_0,x_1,x_2).
 \]
Finally, we consider  $P_{16}(x_0,x_1,x_2)$  which  must contain $x_2^3x_0$ to avoid creating a
singularity at $[0,0,1,0]$.  Thus, we can write
$ 
     P_{16}(x_0,x_1,x_2)
     = r_0 x_2^3x_0 + x_2^2  P _6(x_0,x_1) + x_2P_{11}(x_0,x_1) + R_{16}(x_0,x_1)
$      
which  can  be rewritten as 
 $ 
     P_{16}(x_0,x_1,x_2) = r_0 x_0 x_2^3 + x_2^2 G_3(x_0^2,x_1) 
                         + x_0 x_2 G_5(x_0^2,x_1) + G_8(x_0^2,x_1)
$,
where $G_j$ is an ordinary polynomial of degree $j$ and $r_0$ is a constant.
Using the transformation $x_2\mapsto x_2 - x_0q_2(x_0^2,x_1)/(3r_0)$ we
can remove all of the terms of $x_2^2G_3(x_0,x_2)$ which are
divisible by $x_0^2$, i.e.\ all terms except $x_1^3x_2^2 $.
In other words,
\[
        P_{16}(x_0,x_1,x_2) = r_0 x_0 x_2^3 + G_0 x_1^3 x_2^2
                         + x_0 x_2G_5(x_0^2,x_1) + G_8(x_0^2,x_1)
\]
which brings $F$ in the required shape.
The group of substitutions which preserve  this form is given by
$x_j\mapsto a_j x_j$ for $a_j\in\bC^*$, where $a_1a_3^2=1$ to
keep the coefficient of $x_3^2x_1$ equal to 1.  
\par
The last assertion follows by considering the relations imposed on the coefficients of $F_C$ if
$(c_0,c_1,c_2,c_3)\in (\bC^\times)^4$
fixes each of them.
\end{proof}

\section{The Picard number of generic type  2  members.\\
By Wim  Nijgh} \label{app:nijgh}

\begin{small} \textbf{Acknowledgements}. 
The author would like to thank Ronald van Luijk for all his input and feedback on this work. 
The author would also like to thank Chris Peters for the opportunity to work on this problem and the interesting conversations we had on this topic. 

\medskip 
Let $k$ be an arbitrary field and let~$\overline{k}$ be an algebraic closure of~$k$.
For any variety~$Y$ over~$k$, we let~$Y_{\overline{k}}$ denote its base change to~$\overline{k}$.
Furthermore, if $Y$ is projective, we denote by~$\NS(Y)$ the Neron-Severi group of~$Y$ and by~$\NS(Y)_\textnormal{tor}$ its torsion subgroup. 
We denote by~$\rho(Y)$ the Picard number of~$Y$, which is the rank of~$\NS(Y)$.
\end{small}

\subsection{Overview} In the weighted projective space $\bP_k(1,2,3,5)$ with coordinates $x_0,x_1,x_2,x_3$, 
we look at the family of quasi-smooth surfaces of degree~12. 
After some linear transformation, such a surface is given by an equation $F=0$ where
\begin{align}\small
\begin{split} \label{equation} 
    F =\; & x_1 x_3^2 + G'_0 x_0 x_1^3 x_3 + G_{1,1}(x_0^3,x_2) x_0^2 x_1 x_3 + G_{1,2}(x_0^3,x_2) x_1^2 x_3 + G_2(x_0^3,x_2) x_0 x_3 \\
    &+ G_0 x_2^4 + G_{1,3}(x_0^2,x_1)x_0x_2^3 +
    G_3(x_0^2,x_1)x_2^2 + G_4(x_0^2,x_1) x_0 x_2 + G_6(x_0^2,x_1),
\end{split}
\end{align}
such that $G_0,G'_0\in k$, each~$G_{1,i}$ is homogeneous of degree~$1$ and each~$G_i$ is homogeneous of degree~$i$. 
If $\ch(k)\neq 2$, one can assume that $$G'_0=G_{1,1}=G_{1,2}=G_{1,3}=0$$ 
after some linear transformation and obtain the family described in Proposition \ref{prop:GIT}(b). 

Now let  $Y$ be a quasi-smooth surface of degree~12 in~$\bP_k(1,2,3,5)$.
Note the only singular points in~$\bP_k(1,2,3,5)$ are the points $(0:1:0:0)$, $(0:0:1:0)$ and~$(0:0:0:1)$.  
From equation~\ref{equation}, we observe that the point~$(0:0:0:1)$ is always contained in the surface~$Y$. 
If the coefficient of the monomial~$x_1^6$ in~$G_6$ is non-zero, then $(0:1:0:0)$ is not on the surface $Y$, 
and if $G_0\neq 0$, then~$(0:0:1:0)$ is not on $Y$.

From now on we assume that we are in the general case where indeed the points $(0:1:0:0)$ 
and $(0:0:1:0)$ are not on $Y$. Let~$Y'$ be a minimal desingularization of~$Y$.
The following lemma shows that we can obtain $Y'$ from a blowup in the point $(0:0:0:1)$.  

\begin{lemma}\label{DE}
    Suppose that $\ch(k)\neq 5$ and let $Y$ be as above. 
    Then the blowup of~$Y$ in $(0:0:0:1)$ gives a minimal desingularization of~$Y$. 
    The exceptional locus contains two rational curves, i.e., 
    each curve is isomorphic to $\bP^1$, which are both defined over $k$. 
    The self-intersection number of these curves equal~$-2$ and~$-3$ and they intersect each other transversally in one point.
\end{lemma}
\begin{proof}
    We can generalize the proof of Proposition~\ref{prop:InsAndSIngs}(b) to deduce that the point~$(0:0:0:1)$ is a quotient singularity of type $\tfrac{1}{5}(1,3)$. 
    The procedure of resolving this singularity generalizes to fields~$k$ with $\ch(k)\neq 5$, see \cite[Proposition 2.5]{Got96}, and the desired results all follow. 
\end{proof}

From this observation, we deduce the following result.

\begin{corollary} \label{rho3}
    Let $Y'$ be a minimal desingularization of a quasi-smooth surface of degree~12 in $\bP_k(1,2,3,5)$. Then we have $\rho(Y')\geq 3$. 
\end{corollary}
\begin{proof}
    The strict transform of the hyperplane section given by the equation $x_0=0$ and the two curves obtained from the blow-up are  blue  linearly  independent from each other in $\NS(Y')$ and are all non-torsion, cf. Corollary~\ref{cor:OnPic}(b).
\end{proof}

These notes aim to prove that for a field of characteristic 0, and for a general enough choice, the geometric Picard number $\rho(Y'_{\overline{k}})$, 
and hence also the Picard number $\rho(Y')$, equals~3. 
We will do this by showing that it holds for the surface of Definition~\ref{surf}.

\begin{definition}\label{polF}
    We define $F\in \bZ[x_0,x_1,x_2,x_3]$ to be the polynomial given as in equation~\eqref{equation}, with \begin{align*}
        G'_0=G_{1,1} &= G_{1,3} = 0, \quad  \,       G_{1,2}(x_0^3,x_2)  = 3x_2; \\
        G_2(x_0^3,x_2) &= x_2^2 + x_0^3 x_2, \quad G_0  = -1; \\
        G_3(x_0^2,x_1) &= x_0^6 + 2x_0^4x_1+ x_0^2 x_1^2+2x_1^3; \\
        G_4(x_0^2,x_1) &= 4x_0^6x_1+2x_0^4x_1^2+x_0^2 x_1^3+4x_1^4; \\
        G_6(x_0^2,x_1) &= x_0^{12} + 3x_0^{10} x_1 + 3x_0^8 x_1^2 + x_0^4  x_1^4 + 3x_0^2 x_1^5 + x_1^6.
    \end{align*}
\end{definition}

\begin{definition}\label{surf}
    We define~$X$ to be the degree~12 surface in~$\bP_{\bQ}(1,2,3,5)$ given by~$F=0$. 
    We define~$X'$ over~$\bQ$ as the surface obtained by the blowup of $X$ in the point $(0:0:0:1)$.
\end{definition}

\begin{theorem}\label{thm}
     The surface $X'$ is smooth and   $\rho(X')=\rho(X_{\overline{\bQ}}')=3 $. 
\end{theorem}

The proof of Theorem~\ref{thm} can be found in~\ref{proof}. The proof uses a similar method as described in the proof of~\cite[Theorem~3.1]{luik2} and in \cite[Section 4]{kloosterman}. 
We will look at good reductions of this surface over~$\bbF_2$ and over~$\bbF_3$, denoted~$X_2'$ and~$X_3'$, respectively, and show that
(i) $\rho((X_2')_{\overline{\bbF}_2}),\rho((X_3')_{\overline{\bbF}_3})\leq 4$ and 
(ii) the discriminants of the geometric Neron-Severi lattices of $X_2'$ and $X_3'$ do not differ by a square factor. 
We will see that this implies that $\rho(X'_{\overline{\bQ}})$ is at most~$3$. 

To calculate the discriminants (up to a square factor) of these Neron-Severi lattices, we will use the Artin-Tate formula. 
This, together with a result about finding upper bounds for the Picard number, will be discussed in~\ref{ATBg}. 

Next, we will define the surfaces of good reduction and determine the characteristic polynomial of Frobenius acting on some cohomology group. 
This characteristic polynomial will give the upper bound for the Picard number, and together with the Artin-Tate formula, it will give the necessary information we need in order to prove Theorem~\ref{thm}. 
This work will be done in~\ref{goodred}. 

Some of the proofs in~\ref{goodred} are based on computations which are done in \textsc{SageMath}. 
The code which is used can be found in~\ref{a}.

\subsection{The Neron-Severi group for varieties over finite fields}\label{ATBg}

In this section, we recall some known results for the Neron-Severi group for varieties over finite fields. These results will be used in the proof of Theorem~\ref{thm} and some of the intermediate results in~\ref{goodred}.

Assume that $k$ is a finite field. 
Set $p:=\ch (k)$ and $q:=\#k$.
Let~$Y$ denote any projective, smooth and geometrically connected surface over~$k$. Define $$\alpha(Y):= \chi(Y,\mathcal{O}_Y)-1+\dim(\Pic_{Y/k}).$$  

Let~$\ell\neq p$ be any other prime. The absolute Galois group of~$k$, which we will denote by~$\Gal(\overline{k}/k)$ and which is generated by Frobenius, acts on the geometric Neron-Severi group $\NS(Y_{\overline{k}})$ as well as on the second cohomology group $\tH^2_\text{\'et}(Y_{\overline{k}},\bQ_\ell(1))$. 
We let~$\Frob_q$ denote the linear map induced by Frobenius on $\tH^2_\text{\'et}(Y_{\overline{k}},\bQ_\ell(1))$
and let~$\varphi$ denote the characteristic polynomial of~$\Frob_q$.

\begin{proposition}\label{inclNSH}
    There is an inclusion 
    $$\NS(Y_{\overline{k}})\otimes \bQ_\ell(1)\hookrightarrow \tH^2_\text{\'et}(Y_{\overline{k}},\bQ_\ell(1))$$
    of finite-dimensional vector spaces that respects the Galois action. 
\end{proposition}
\begin{proof}
    See \cite[Proposition 6.2]{luik1}
\end{proof}

\begin{corollary}\label{NSeq}
    Identify $\NS(Y)$ as a subset of $\NS(Y_{\overline{k}})$. 
    Then the following holds.
    \begin{itemize}
        \item[(i)] Under the embedding of Proposition~\ref{inclNSH}, we have the equality 
            $$\NS(Y)\otimes \bQ_\ell(1)=\NS(Y_{\overline{k}})\otimes \bQ_\ell(1)\cap \tH^2_\text{\'et}(Y_{\overline{k}},\bQ_\ell(1))^{\Gal(\overline{k}/k)}.$$
        \item[(ii)] If $r$ denotes the multiplicity of the eigenvalue 1 of $\Frob_p$, then for the Picard number of $Y$, we have $\rho(Y)\leq r$. 
        \item[(iii)] The number of eigenvalues, counted with multiplicity, of $\Frob_p$ which are a root of unity, is an upper bound for $\rho(Y_{\overline{k}})$.
        \item[(iv)] The Tate conjecture holds for $Y$ if and only if the upper bounds in (ii) and~(iii) are exactly the Picard numbers of the surfaces~$Y$ and~$Y_{\overline{k}}$, respectively. \qed
    \end{itemize}
\end{corollary}

\begin{remark}\label{evenub}
    For the surfaces we study, we have $\dim \tH^2_\text{\'et}(Y_{\overline{k}},\bQ_\ell(1))=22$, see Proposition~\ref{prop:InsAndSIngs}(b). 
    In particular, because $22$ is even, we have as a corollary of the Weil conjectures, that in our case the upper bounds given in Corollary~\ref{NSeq} will be even. 
\end{remark}

Remark~\ref{evenub} is the reason that we compare the reduction at two different primes in the proof of Theorem~\ref{thm}. We use the following result to make this comparison.

\begin{lemma}[Artin-Tate formula]\label{AT}
    Suppose the Tate conjecture holds for~$Y$. 
    Then the group~$\Br(Y)$ is finite, and 
    $$\lim_{t\to 1} \frac{\varphi(t)}{(t-1)^{\rho(Y)}}=\frac{\# \Br(Y)\cdot \disc (\NS(Y)/\NS(Y)_\textnormal{tor})}{q^{\alpha(Y)}(\#\NS(Y)_\textnormal{tor})^2}.$$
\end{lemma}
\begin{proof}
    See \cite[Theorem 5.2]{Tat66}. 
    \end{proof}

\begin{corollary}\label{NSuptosq}
    Suppose the Tate conjecture holds for~$Y$. Then the discriminant of the Neron-Severi lattice $\NS(Y)/\NS(Y)_\textnormal{tor}$ is up to a square factor equal to $$q^{\alpha(Y)}\cdot \lim_{t\to 1} \frac{\varphi(t)}{(t-1)^{\rho(Y)}}.$$
\end{corollary}
\begin{proof}
    If the Brauer group is finite, its order $\#\Br(Y)$ is a square (see \cite{brauer1} and its corrigendum \cite{brauer2}). 
    With this observation, the result follows directly from Lemma~\ref{AT}.
\end{proof}

\subsection{Good reductions at the primes 2 and 3}\label{goodred}

In this section, fix $p\in\{2,3\}$. 
We will define two surfaces over $\bbF_p$, which will be good reductions for the surfaces~$X$ and~$X'$ of Definition~\ref{surf}, respectively. 

\begin{definition}\label{Xp}
       We define the surface $X_p$ over $\bbF_p$ as the degree~12 surface in~$\bP_{\bbF_p}(1,2,3,5)$ given by~$F=0$, where~$F$ from Definition~\ref{polF} is seen as a polynomial with coefficients in $\bbF_p$. 
    We also define the surface $X_p'$ to be the blowup of $X_p$ in the point $(0:0:0:1)$.
\end{definition}

\begin{lemma}
    The surface~$X_p$ is quasi-smooth and the surface~$X_p'$ is smooth.
\end{lemma}
\begin{proof}
    A direct verification, done in \textsc{SageMath} (see \ref{a.qs}), shows that $X_p$ is quasi-smooth. Because $(0:0:0:1)$ is the only singular point on $X_p$, it follows from Lemma~\ref{DE} that $X'_p$ is smooth.
\end{proof}

Our next aim is to count the number of points on the surface~$X_p'$, which will be used in the proof of Proposition~\ref{frob} to determine the characteristic polynomial of Frobenius.
To do this, we will use an  elliptic fibration  on  the surface $X_p'$ (see \S~\ref{ssec:GenEllFib} for this notion) whose fibers do not contain a~$-1$-curve.
 
The elliptic fibration we will use, is the morphism that is induced by the rational map $\tau\colon X_p\dashrightarrow \bP^1$ defined by  $(x_0:x_1:x_2:x_3)\mapsto (x_0^2:x_1)$. 
The following lemma shows that the map $\tau$ extends to a minimal elliptic fibration $\tau'\colon X_p'\to \bP^1$. 

\begin{lemma}\label{fib}
    The map $\tau\colon X_p\dashrightarrow \bP^1$ extends to a minimal elliptic fibration $\tau'\colon X_p'\to \bP^1$ 
    and for the curves in the exceptional locus, we have that the~$-2$-curve is in the fiber above the point $(1:0)$ and that the~$-3$-curve is a double section for this fibration.
\end{lemma}
\begin{proof}
    We can apply the proof of Proposition~\ref{prop:Main1}(b) to the surfaces~$X_p$ and $X_p'$. 
\end{proof}

Next, we define an arithmetic surface $\mathcal{C}\to \Spec\bbF_p[t]$. We refer the reader to~\cite[Section~IV.4f]{sil94} for the definition and standard results on arithmetic surfaces.

\begin{definition}\label{asC}
    The polynomial $F$  from Eqn.\eqref{equation} defines the arithmetic surface 
    $$\mathcal{C}\subset 
    \Spec \bbF_p[t]\times \bP(1,2,1)$$ 
   as the zero set of  
     $F'=tz^4\cdot F(1,t,x/z,y/tz^2)$, $F'\in \bbF_p[t][x,y,z]$.
\end{definition}

Using the birational map $\mathcal{C}\dashrightarrow X_p$ given by 
 $(t,(x:y:z))\mapsto(1:t:x/z:y/tz^2) $ (and with inverse 
given by 
 $(x_0:x_1:x_2:x_3)\mapsto (x_1/x_0^2,(x_0x_2:x_0x_1x_3:x_0^4))$) 
 induces  an isomorphism 
$$\mathcal{C}\setminus\{tz=0\}\xrightarrow{\sim} X_p\setminus (\{x_0=0\}\cup \{x_1=0\}).$$
With $E\subset X'_p$  the exceptional locus of $X'_p\to X_p$ and $O:=(0:0:0:1)$, 
 we have   $X_p'\setminus E \xrightarrow{\sim} X_p\setminus \{O\} $.

Next,   setting  $t=x_1/x_0^2$, we can identify $\Spec \bbF_p[t]\subset \bP^1$ as a subscheme. This identification makes $U:=X'_p\setminus {\tau'}^{-1}(0:1)$ an arithmetic surface over $\bbF_p[t]$.
Combined with the above observations, we get an embedding 
 $\mathcal{C}\setminus\{tz=0\}\hookrightarrow U$ 
of $\Spec \bbF_p[t]$-schemes. 
The next lemma shows that this extends to an isomorphism.

\begin{lemma}\label{proj_mod}
    The embedding $\mathcal{C}\setminus\{tz=0\}\hookrightarrow U$ above, extends to an isomorphism $\mathcal{C}\xrightarrow{\sim} U$ as $\Spec \bbF_p[t]$-schemes.
\end{lemma}
\begin{proof}
    Note that because $\tau'$ is a minimal elliptic fibration, it follows that $U$ is a minimal proper regular model for its generic fiber as defined in~\cite[Theorem~IV.4.5b]{sil94}.
    We will show that~$\mathcal{C}$ is a minimal proper regular model as well and then the result will follow from~\cite[Theorem~IV.4.5b]{sil94}.
    
    To show this, we first note that this surface is projective over~$\Spec \bbF_p[t]$, and hence proper over~$\Spec \bbF_p[t]$. 
    To check that it is smooth, we note that for each $a\in \overline{\bbF}_p$, the point $(a,(0:1:0))$ does not lie on $\mathcal{C}$. It follows that every point of this surface lies on the affine where $x$ does not vanish or where $z$ does not vanish.
    Now using \textsc{SageMath}, see~\ref{a.surf}, we can check that~$\mathcal{C}$ is smooth over~$\bbF_p$ by checking both affines.
    It follows that $\mathcal{C}$ is regular.
    So it remains to show that this surface~$\mathcal{C}$ is minimal.
    
    Recall that there is an embedding $\mathcal{C}\setminus\{tz=0\}\hookrightarrow U$. 
    Because the fibration on $U$ is minimal, we deduce that the only possible exceptional curves in the fibers of $\mathcal{C}\to \Spec \bbF_p[t]$ can be found at $z=0$ or in the fiber~$t=0$.
    Note that for every fiber $t=a$, it is easy to see that $\mathcal{C}\cap \{t=a,z=0\}$ is $0$-dimensional, cf. the proof of Lemma~\ref{pts_sing_fib}, from which we deduce that every fiber above $t=a\neq 0$ cannot contain an exceptional curve. 
    
    The fiber above~$t=0$ is given by the equation   $y(y+x^2+xz)=0 $ and so it consists of two rational curves $E_1$ and $E_2$ which intersect each other in two points, i.e., 
     as a divisor, the fiber is given by $E_1+E_2$. Because the intersection number of a fiber with every fibral divisor is zero, see~\cite[Proposition~IV.7.3(b) and Remark~IV.7.6]{sil94}, 
     it follows that 
     $E_i\cdot(E_1+E_2)=0$,   $i =1,2$.
    Hence  $E_1^2=E_2^2=-E_1\cdot E_2=-2$. 
    This shows that there are no fibral exceptional curves on~$\mathcal{C}$ and we conclude from~\cite[Remark~IV.7.5.1]{sil94} that~$\mathcal{C}$ is minimal.
\end{proof}

One of the tools we will make use of, is the discriminant related to this arithmetic surface~$\mathcal{C}$. 
For a definition of the discriminant of a weighted homogeneous polynomial, we refer the reader to~\cite[§1.1]{Ter23}.

\begin{lemma}\label{lemdisc}
    Define~$\Delta_p:=\disc F'$. 
    Then we have 
    \begin{align*}
        \Delta_2(t)&=t^2(t^{10} + t^9 + t^8 + t^7 + t^2 + t + 1)(t^{12} + t^8 + t^5 + t^4 + t^3 + t + 1);\\
        \Delta_3(t)&=2t^2(t + 2)(t^9 + 2t^8 + 2t^7 + 2t^6 + t^5 + 2t^4 + t^2 + t + 2) \; \cdot\\
        &\hspace{3cm} (t^{12} + 2t^{10} + t^8 + t^7 + 2t^6 + t^5 + t^2 + 2),
    \end{align*} 
    where the terms in between brackets are irreducible.
\end{lemma}
\begin{proof}
    Recall that the equation of~$\mathcal{C}$ is of the form $y^2+h_t(x,z)y+f_t(x,z)$, where 
    \begin{align*}
        f_t(x,z)&=F'(t,(x,0,z)); \\
        h_t(x,z)&=z^2 (G_{1,2}(1,x/z)t^2+G_2(1,x/z)).
    \end{align*}  
    The formula for the discriminant of such a polynomial is given in~\cite[Example~3.5]{Ter23} combined with~\cite[Lemma~3.3]{Ter17}.
    From this formula, we deduce that the discriminant $\Delta$ of $F'$ over $\bZ[t]$ can be given by
    $$\Delta=4^{-4}\cdot \disc (h_t(x,z)^2-4f_t(x,z))\in \bZ[t],$$
    where~$\disc$ denotes the discriminant of a polynomial of degree~$4$, see~\cite[Chapter~12.1.B (1.35)]{Gel94}.
    
    Using \textsc{SageMath}, see~\ref{a.disc}, we use the above formula to calculate this polynomial~$\Delta$. Then we 
    factor its reduction mod $p$ to obtain the above expressions.
\end{proof}

We will use the discriminant~$\Delta_p$ of Lemma~\ref{lemdisc} to deduce the type of fibers of the fibration~$\tau'\colon X_p'\to \bP^1$.

\begin{lemma}\label{singfib}
    Let $\tau'\colon X'_p\to \bP^1$ be the elliptic fibration as above. 
    Then the fiber above $P\in\bP^1(\overline{\bbF}_p)$ is singular if and only if $P=(1:t_0)$ with~$t_0$ a zero of $\Delta_p$.
    Moreover, if it is a singular fiber, then it is of type~I${}_2$ if $P=(1:0)$ and it is of  type~I${}_1$ otherwise.
\end{lemma}
\begin{proof}
    Let $t_0$ be a zero of~$\Delta_p$. 
    Recall from Lemma~\ref{proj_mod} that the fiber above~${(1:t_0)}$ of~$\tau'$ is isomorphic to the above fiber~$t_0$ of the arithmetic surface~$\mathcal{C}\to \Spec \bbF_p[t]$. 
    We can use Tate's algorithm (see~\cite[Section~IV.8 and~IV.9]{sil94}) to determine the type of fiber above $t_0$ on the arithmetic surface $\mathcal{C}$ as follows.
    
    First, we choose some separable extension of~$\bbF_p(t)$ which is unramified at $t_0$ and such that the base change of $\mathcal{C}$ to this field, gives an arithmetic surface~$\mathcal{C}'$ which has a section. 
    We can then put the defining equation of~$\mathcal{C}'$ in Weierstrass form to apply the algorithm. 
    Now choose~$t_0'$ such that the fiber above $t_0'$ on~$\mathcal{C}'$ gets mapped to the fiber above~$t_0$ on $\mathcal{C}$.
    Because the extension is unramified above~$t_0$, the base change is regular and minimal around~$t_0'$ and so the fiber above~$t_0'$ on~$\mathcal{C}'$ is isomorphic to the fiber above~$t_0$ on~$\mathcal{C}$ are isomorphic over some separable extension, and hence the fiber types are the same. By the defining property of the discriminant, see~\cite[Theorem~1.2]{Ter23}, the valuation of the discriminant of the Weierstrass form at~$t_0'$ will exactly equal the valuation of the polynomial~$\Delta_p$ at~$t_0$. 
    From this we deduce that we can use the valuation of~$\Delta_p$ at~$t_0$ to deduce the type of fiber above~$t_0$.
    
    Note that~$\Delta_p$ has a factor~$t^2$ and that all the other factors are separable. So for $t_0\neq 0$, we have multiplicity~1. Hence, above $(1:t_0)$, we deduce from~\cite[Section~IV.9, Table~4.1]{sil94}, that this is a fiber of type~I${}_1$.

    For $t_0=0$, we have~$v_{0}(\Delta_p)=2$.
    In characteristics~$2$ and~$3$ the order of vanishing of the discriminant will be bigger than~2 in case the fiber has multiplicative reduction due to wild ramification, see~\cite[Proposition~5.1]{SS13}.
    We deduce from the above that the fiber above $(1:0)$ is of type~I${}_2$. 
    
    Because~$\Delta_p$ has degree~24, which equals the Euler characteristic of the surface (see Proposition~\ref{prop:Main1}(b)), we deduce that these are all the singular fibers of this fibration $\tau'\colon X'_p\to \bP^1$ and that all other fibers are smooth.
\end{proof}

\begin{rmq}
    In the proof of Lemma~\ref{singfib}, we deduced that the fiber above $t=0$ is of type I${}_2$ by using the discriminant $\Delta_p$, but we already encountered this fiber in the proof of Lemma~\ref{proj_mod}, from which we also could have concluded that it is of type I${}_2$.
\end{rmq}

We now define the following affine curves, which we will use in Proposition~\ref{pts} to count the points on the surface $X_p'$.

\begin{definition}\label{Ca}
    Set $\Tilde{F}=x_1\cdot F$.
    For each $a \in \bbF_{p^n}$, we define the affine curve~$C_a$ over~$\bbF_{p^n}$ in~$\mathbb{A}^2_{\bbF_{p^n}}(x',y')$ by the equation $\Tilde{F}(1,a,x',y'/a)=0$. We define the curve~$C_\infty$ in~$\mathbb{A}^2_{\bbF_p}(x',y')$ to be the curve given by the equation $F(0,1,x',y')=0$. 
\end{definition}

For the curves $C_a$, we have the following result.

\begin{lemma}\label{pts_sing_fib}
    Let $a\in \bbF_{p^n}$ and let $g\in \bbF_{p^n}[s]$ be given by $g=s^2+s-a$. 
    The number of $\bbF_{p^n}$-points on the fiber above the point $(1:a)$ of~$\tau'\colon X'_p\to \bP^1$ is equal to the number of $\bbF_{p^n}$-points on~$C_a$ plus the number of roots in~$\bbF_{p^n}$ of the polynomial~$g$.
\end{lemma}
\begin{proof}
    By Lemma~\ref{proj_mod}, we have that the fiber above $(1:a)$ of the morphism~$\tau'$ is isomorphic to the fiber above $t=a$ of~$\mathcal{C}$.
    The embedding $$(x',y')\mapsto (a,(x':y':1))$$ embeds the curve $C_a$ into the fiber above $t=a$ of the arithmetic surface~$\mathcal{C}$ and is isomorphic to the affine part of this fiber where $z$ does not vanish.
    So it follows that the number of points on the fiber above $(1:a)$ equals the number of points on $C_a$ plus the number of points on this fiber intersected with $\{z=0\}$. 
    
    Recall that the defining polynomial of $\mathcal{C}$ is given by 
    $$F':=tz^4\cdot F(1,t,x/z,y/tz^2)\in \bbF_p[t][x,y,z],$$
    and that $F'(a,(0,1,0))=1$. 
    It follows that all the points on the intersection of $t=a$ with $z=0$ and the arithmetic surface $\mathcal{C}$ are on the affine where $x$ does not vanish. 
    This means that these points are of the form $(a,(1:s:0))$ such that~$F'(a,(1,s,0))=0$.
    Following the steps defining $F'$, we can deduce that
    $$F'(1,s,0)=s^2+G_2(0,1)s+a\cdot G_0=s^2+s-a,$$ 
    from which the result follows.
\end{proof}

Now we combine the above results to count the points on the surface~$X_p'$.

\begin{proposition}\label{pts}
      The number of~$\bbF_{p^i}$-points on~$X_p'$ is given by the following table.
 
 \begin{center}
    \begin{tabular}{r|r|r}
        $n$ & $\#X_2'(\bbF_{2^n})$ & $\#X_3'(\bbF_{3^n})$ \\
        \hline
        1 & 11 & 17 \\
        2 & 29 & 95 \\
        3 & 65 & 803 \\ 
        4 & 241 & 6767 \\ 
        5 & 1121 & 59477 \\
        6 & 4289 & 532883 \\
        7 & 16769 & 4798097 \\
        8 & 67329 & 43071575 \\
        9 & 264449 & 387431885 
    \end{tabular}
 \end{center}
\end{proposition}
\begin{proof} 
    Let $1\leq n\leq 9$ be given. 
    As mentioned earlier, we will count the $\bbF_{p^n}$-points of $X_p'$ by counting for each point $P\in \bP^1(\bbF_{p^n})$ the number of $\bbF_{p^n}$-points in the fiber of the map $\tau'\colon X_p'\to \bP^1$ and sum their total. 
    Using \textsc{SageMath}, we follow the steps described next and count for each fiber the number of points and add them together. 
    The code can be found in~\ref{a.count}.

    We start with the fiber above $(0:1)$.  By Lemma~\ref{singfib}, we have that it is non-singular. 
    Recall that $X_p'\setminus E \cong X_p\setminus\{(0:0:0:1)\}$, from which it follows that we can identify the affine curve~$C_\infty$ as an affine part of the fiber above $(0:1)$. 
    This curve~$C_\infty$ is given by an equation of the form $y^2+h(x)y=f(x)$ where $f=-F(0,1,x,0)$ and $h=G_{1,2}(0,1,x,0)$.
    A smooth projective closure can be defined by using the function \verb+HyperellipticCurve+ in \textsc{SageMath}. 
    Because this defines a smooth projective curve, of which an affine is isomorphic to an affine part of the fiber above $(0:1)$, it is isomorphic to this fiber. 
    In particular, the amount of $\bbF_{p^n}$-points will be the same and we can count the number of points on this hyperelliptic curve, see Remark~\ref{hypel}.

    Above all the other fibers, i.e. above $(1:a)$, we first check if the curve~$C_a$ is smooth by checking if the discriminant~$\Delta_p$ vanishes, see Lemma~\ref{singfib}. 
    If it is not smooth, we use Lemma~\ref{pts_sing_fib} to count the number of points.
    In the smooth case, we can count the points similarly as in the case for the fiber $(0:1)$ as follows. 
    The defining polynomial of $C_a$ is again of the form $y^2+h(x)y-f(x)$ where $f=-a\cdot F(1,a,x,0)$ and $h=G_{1,2}(1,x)a^2+G_2(1,x)$. 
    Then, we can again define an isomorphic hyperelliptic curve using~$f$ and~$h$ and count the points on this curve.
\end{proof}

\begin{remark}\label{hypel}
    The main reason to use the function \verb+HyperellipticCurve+ in~\textsc{SageMath} in the proof of Proposition~\ref{pts} is that it has a built-in pointing count algorithm which is faster than naive point counting on curves.
\end{remark}

Our next goal is to find the characteristic polynomial of Frobenius acting on the vector space~$H_p:=H^2_\text{\'et}((X_p)_{\overline{\bbF}_p},\bQ_\ell(1))$. We will denote by $\Frob_p$, the linear map on~$H_p$ that is induced by the Frobenius morphism on~$\overline{\bbF}_p$ and by $f_p$ the characteristic polynomial of $\Frob_p$. 
    
\begin{proposition}\label{frob}
    The characteristic polynomial  of $\Frob_p$ equals  $f_p=(t-1)^4\cdot h_p $,  where $h_p$ is irreducible and equals
    \begin{align*}
        h_2(t) &=  t^{18} + t^{17} + t^{16} + 2t^{15} + 3t^{14} + 3t^{13} 
        + \tfrac{7}{2}t^{12} + \tfrac{9}{2} t^{11} + \tfrac{9}{2} t^{10} \\
        & \hspace{1cm} + \tfrac{9}{2} t^9 + \tfrac{9}{2} t^8 + \tfrac{9}{2} t^7 
        + \tfrac{7}{2} t^6 + 3t^5 + 3t^4 + 2t^3 + t^2 + t + 1;\\
        h_3(t) &= t^{18} + \tfrac{5}{3} t^{17} + \tfrac{8}{3} t^{16} + \tfrac{10}{3} t^{15} + 4 t^{14} + \tfrac{14}{3} t^{13} 
        + \tfrac{16}{3} t^{12} + \tfrac{16}{3} t^{11} + \tfrac{16}{3} t^{10} \\
        & \hspace{1cm} + \tfrac{16}{3} t^9 + \tfrac{16}{3} t^8 + \tfrac{16}{3} t^7 
        + \tfrac{16}{3} t^6 + \tfrac{14}{3} t^5 + 4 t^4 + \tfrac{10}{3} t^3 + \tfrac{8}{3} t^2 + \tfrac{5}{3} t + 1.
    \end{align*}
\end{proposition}
\begin{proof}
    Computations in this proof are done in~\textsc{SageMath}, see~\ref{a.pol}.

    Recall from Proposition~\ref{inclNSH}, that there is an inclusion
    $$\NS((X_p')_{\overline{\bbF}_p})\otimes \bQ_\ell(1)\hookrightarrow H_p$$
    respecting the Galois action. From Corollary~\ref{rho3}, we deduce that there is a subspace~$U_p$ of~$H_p$ of dimension~3 on which Frobenius is acting trivially. 

    Let $V_p$ denote the quotient space $V_p=H_p/U_p$. Because Frobenius leaves $U_p$ invariant, we get an induced action on $V_p$, denoted by $\overline{\Frob}_p$, 
    and we have the relation 
    $ f_p(t)=(t-1)^3\cdot g_p(t)$,     where~$g_p$ denotes the characteristic polynomial of~$\overline{\Frob}_p$. \
    
    From Proposition~\ref{prop:InsAndSIngs}(b), we know that $\dim H_p=22$. It follows that
    $$\dim V_p=\dim H_p-\dim U_p=22-3=19,$$
    and so the polynomial $g_p$ has degree 19. 
    Moreover, it is equal to
    $$g_{p}(t)=c_0t^{19}+c_1t^{18}+\dots + c_{18}t + c_{19},$$
    where $c_0=1$, where $c_1=-\Tr(\overline{\Frob}_p)$, and where the other~$c_i$ are given recursively by Newton's identity (see \cite[§26, Exercise~3]{waerden})
        \begin{equation*}
            c_i=-\frac{\Tr(\overline{\Frob}_p^i)+\sum_{j=1}^{i-1}c_j\Tr(\overline{\Frob}_p^{i-j})}{i}.
        \end{equation*}
    The functional equation gives us that $t^{19} g(1/t)=\pm g(t)$ and so we either have $c_i=c_{19-i}$ for all $0\leq i\leq 9$ or $c_i=-c_{19-i}$ for all $0\leq i\leq 9$. 
    
    From the Lefschetz Trace formula, the Weil conjectures, the relation 
    $$\Tr(\overline{\Frob}_p^i)=\Tr(\Frob_p^i)-3$$
    and the fact that the eigenvalues of $\Frob_p^i$ on $H_p$ differ by a factor $p^i$ from the eigenvalues of Frobenius acting on $\tH^2_\text{\'et}((X_p')_{\overline{\bbF}_p},\bQ_\ell)$, we deduce the equality
    \begin{equation*}\label{eq:trace}
        \Tr(\overline{\Frob}_p^i)=\frac{\#X_{p}'(\bbF_{p^i})-1-p^{2i}-3p^i}{p^i}.
    \end{equation*}

    Using the above formula, Proposition~\ref{pts} and Newton's identity mentioned above, we get the following values of~$c_i$ for $1\leq i\leq 9$ for both $p=2,3$.
    \begin{center} 
    \begin{tabular}{r|lllllllll}
        & $c_1$ & $c_2$ & $c_3$ & $c_4$ & $c_5$ & $c_6$ & $c_7$ & $c_8$ & $c_9$ \\ \hline
        $p=2$ & 0 & 0 & 1 & 1 & 0 & $\frac{1}{2}$ & 1 & 0 & 0 \\
        $p=3$ & $\frac{2}{3}$ & 1 & $\frac{2}{3}$ & $\frac{2}{3}$ & $\frac{2}{3}$ & $\frac{2}{3}$ & 0 & 0 & 0 
    \end{tabular}
    \vspace{0.2cm}
    \end{center}
    
    The above table gives us two options for the polynomials $g_p$, which depend on the sign of the functional equation. 
    It follows from the Weil conjectures that~$g_p$ should have all roots on the unit circle.
    By using \textsc{SageMath}, we can exclude, for both $p=2$ and $p=3$, the one for which the sign of the functional equation is positive because this polynomial has roots outside the unit circle. 
    Moreover, using a factorization algorithm in \textsc{SageMath}, we can calculate that~$g_p$ contains a factor $t-1$ and an irreducible factor of degree 18 equal to $h_p$. From this, the statement follows.
\end{proof}

\begin{corollary}\label{rho4}
    For the Picard number of $X_p'$, we have  $\rho(X_p')=\rho((X_p')_{\overline{\bbF}_p})\leq 4 $.
\end{corollary}
\begin{proof}    
    Because the minimal polynomial of every root of unity has integral coefficients, it follows that the characteristic polynomial of Frobenius~$f_p$, which we found in Proposition~\ref{frob}, has no other roots that are a root of unity except for~$1$. This means that $\rho(X_p')=\rho((X_p)_{\overline{\bbF}_p})$ and the upper bound follows from Corollary~\ref{NSeq}.
\end{proof}

\subsection{Proof of the main result, Theorem~\ref{thm}}\label{proof}

Let $\bZ_{(6)}$ denote the localization of $\bZ$ away from the ideal~$(6)$. Define the scheme $\mathcal{X}$ over~$\bZ_{(6)}$ to be the blow-up of the scheme $\Proj(\bZ_{(6)}[x_0,x_1,x_2,x_3]/(F))$ at the ideal $I=(x_0,x_1,x_2)$. 
Because blow-ups commute under flat morphisms (see \cite[\href{https://stacks.math.columbia.edu/tag/0805}{Lemma 0805}]{stacks-project}), 
 the reduction of~$\mathcal{X}$ at  the prime~$2$ is isomorphic to~$X_2'$, the reduction at  the prime~$3$ is isomorphic to~$X_3'$, 
 and the generic fiber is isomorphic to~$X'$. From this observation, we conclude that the surface $X'$ is smooth as well, because it has a smooth reduction.

By the proof of~\cite[Proposition~6.2]{luik1}, 
 $ N:=\NS(X_{\overline{\bQ}}')/\NS(X_{\overline{\bQ}}')_\text{tor}$ 
embeds for both $p=2$ and $p=3$ into the lattice
 $$
 N_p:=\NS ((X_p')_{\overline{\bbF}_p})/\NS((X_p')_{\overline{\bbF}_p})_\text{tor}.
 $$
By Corollary~\ref{rho4}, we have that  $\rho(X_{\overline{\bQ}}')\leq \rho((X_p')_{\overline{\bbF}_p})\leq 4$. 
If the Tate conjecture does not hold for $X_2'$ or $X_3'$, then it follows from Corollary~\ref{NSeq} that $\rho(X_{\overline{\bQ}}')\leq 3$, and hence with Corollary~\ref{rho3} that 
 $\rho(X')=\rho(X'_{\overline{\bQ}})= 3, $  
and we would be done.

So assume for the remainder that the Tate conjecture holds for both surfaces~$X_2'$ and~$X_3'$. 
Combining Corollary~\ref{NSeq} and Corollary~\ref{rho4}, we find for both $p=2$ and $p=3$ the equality  $\rho(X_p')=\rho((X_p')_{\overline{\bbF}_p})=4$. 
Because $\Br(\bbF_p)=0$, it follows that the Neron-Severi lattice of $X_p'$ equals the lattice~$N_p$ for both $p=2$ and $p=3$.

Using Corollary~\ref{NSuptosq}, we have that
\begin{align*}
        \disc N_2&=s_2^2\cdot 2^{\alpha(X_2')}\cdot h_2(1) = s_2^2\cdot 2^{\alpha(X_2')} \cdot \frac{103}{2};\\
        \disc N_3&=s_3^2\cdot 3^{\alpha(X_3')}\cdot h_3(1) = s_3^2\cdot 3^{\alpha(X_3')} \cdot 72,
\end{align*}
for some $s_2,s_3\in \bQ$. 
We deduce that the discriminants of $N_2$ and $N_3$ do not differ by a square factor. 

From the theory of lattices, we know that the discriminant of a full-rank sublattice always differs by a square factor from the discriminant of the full lattice.
It follows that $N$ cannot be embedded in both the lattices~$N_p$ as a full-rank sublattice. We deduce  $\rho(X_{\overline{\bQ}}')< \rho((X_p')_{\overline{\bbF}_p})=4  $.
Combining this with Corollary~\ref{rho3}, gives the result of Theorem~\ref{thm}. 
\qed

\begin{remark}\label{rmk:OnTypeA}
  (1)  In the proof of Theorem~\ref{thm}, one can use the valuation at the prime 103 to conclude that the discriminants do not differ a square factor. So instead of using Corollary~\ref{NSuptosq}, we could also have used Lemma~\ref{AT} with the original, slightly weaker, result of Tate, \cite[Theorem~5.1]{Tat66}, which states that the Brauer group is a square or two times a square.
\\
  (2) One can apply the above methods to find that for a minimal desingularization of a general member of the family of quasi-smooth degree 14 surfaces in the weighted projective space $\bP_k (1, 2, 3, 7)$ with $\ch k = 0$, the Picard number equals 2, cf. Corollary~\ref{cor:OnPic}(a).
    We give a sketch of the proof here.
    The minimal desingularization $X'$ of a quasi-smooth surface $X$ of degree 14 in $\bP_k (1, 2, 3, 7)$ is given by blowing up in $(0:0:1:0)$ if $\ch k\neq 3$.
    The exceptional locus of the blowup consists of only one curve, a $-3$-curve, which will be a double section for the elliptic fibration.
    From this we deduce the lower bound $\rho(X')\geq 2$.

    Now it suffices to find a surface over $\bbF_2$ with Picard rank at most $2$. 
    Then as in Proposition~\ref{pts} and Proposition~\ref{frob}, we can count the $\bbF_{2^i}$ points and find the characteristic polynomial of Frobenius, with only minor adjustments to the proofs.
    
    If we apply the method to the surface $X_2$ over $\bbF_2$ given by the equation~$F=0$, where
    \begin{align*}
        F &= x_3^2 + G x_3 + x_1 x_2^4 + G_2 x_0 x_2^3 + G_5 x_0 x_2 + G_7 , \text{ and}\\
        G &= x_0 x_2^2 + x_0^4 x_2 + x_1^2 x_2; \quad G_2  = x_0^2 x_1;\\
        G_5 &= x_0^8 x_1 + x_0^2 x_1^4 + x_1^5; \qquad G_7  = x_0^{14} + x_0^{12} x_1 + x_0^{10} x_1^2 + x_0^6 x_1^4 + x_0^2 x_1^6 + x_1^7,
    \end{align*}
    one can find that the surface $X_2'$ has Picard rank $2$, see~\ref{a.casea}. 
    From this one can deduce the desired result.
\end{remark}
\end{appendix}

\end{document}